\providecommand{\keywords}[1]
{
  \small	
  \textbf{Keywords.} #1
}
\begin{document}

\title{Prediction-Correction Algorithm for \\ Time-Varying Smooth Non-Convex Optimization}

\author[1]{Hidenori Iwakiri \footnote{rtuocck2.020.hi@gmail.com}}
\author[1]{Tomoya Kamijima \footnote{kamijima-tomoya101@g.ecc.u-tokyo.ac.jp}}
\author[2,3]{Shinji Ito   \footnote{i-shinji@nec.com}}
\author[1,3]{Akiko Takeda \footnote{takeda@mist.i.u-tokyo.ac.jp}}

\affil[1]{Department of Mathematical Informatics, The University of Tokyo, Tokyo, Japan}
\affil[2]{NEC Corporation, Kanagawa, Japan}
\affil[3]{Center for Advanced Intelligence Project, RIKEN, Tokyo, Japan}

\maketitle

\begin{abstract}
Time-varying optimization problems are prevalent in various engineering fields, and the ability to solve them accurately in real-time is becoming increasingly important. 
The prediction-correction algorithms used in smooth time-varying optimization can achieve better accuracy than that of the time-varying gradient descent (TVGD) algorithm. 
However, none of the existing prediction-correction algorithms can be applied to general non-strongly-convex functions, and most of them are not computationally efficient enough to solve large-scale problems. 
Here, we propose a new prediction-correction algorithm that is applicable to large-scale and general non-convex problems and that is more accurate than TVGD. 
Furthermore, we present convergence analyses of the TVGD and proposed prediction-correction algorithms for non-strongly-convex functions for the first time. 
In numerical experiments using synthetic and real datasets, the proposed algorithm is shown to be able to reduce the convergence error as the theoretical analyses suggest and outperform the existing algorithms.
\end{abstract}

\keywords{time-varying optimization; non-convex optimization; smooth optimization; prediction-correction method; worst-case convergence analysis}

\section{Introduction}
Time-varying optimization problem appears in various engineering fields, such as robotics~\citep{ardeshiri2011convex}, control~\citep{hours2014parametric}, signal processing~\citep{jakubiec2012d}, electronics~\citep{dall2016optimal}, and machine learning~\citep{simonetto2020survey}. For example, in robot control systems, we need to control the movements of agents to achieve the desired results in a time-varying environment.
In recommendation systems, the best suggestion would be non-stationary when the user's preference or item's value is time-varying, or the revealed rating information is updated with time. Furthermore, recent developments in computers have heightened the need for real-time optimization methods. In real-time optimization, finding an exact optimal solution every time is generally impossible due to the short computational time; thus, we cannot avoid solving the optimization problem approximately at every iteration. 

In this paper, we consider the following time-varying optimization problem:
\begin{align} \label{eq:time_varying_problem}
  \underset{x \in \mathbb{R}^d}{\mathrm{minimize}}\ f(x;t_k).
\end{align}
where $t_k := kh$ denotes the time step, and $h > 0$ is the sampling period.
The value of $h$ should be predetermined before solving the problem. It should be as small as possible, considering factors such as the physical constraints (e.g., the robot's response speed) and the algorithm's computational time.
Although some studies have developed algorithms in a continuous-time setting (i.e., in the limit of $h\to +0$)
~\citep{fazlyab2018interior, sun2022continuous},
  the practical implementation requires a non-zero $h$, which may lead to instability~\citep{xie2022powerflow}.
In contrast, algorithms developed in the discrete-time setting 
are stable even when the sampling period is not short. 
Previous studies in this setting have focused on the dependence of the algorithm's performance on $h$, as will be explained later.

The two most important issues in this problem are to find the optimal trajectory $\{ x_k^\ast := {\mathrm{argmin}}_{x \in \mathbb{R}^d}\ f(x;t_k) \}$ as fast as possible and to track it as accurately as possible, within a limited amount of computational time per iteration. In this paper, we focus on the latter point:
we aim at improving the tracking accuracy after a sufficiently long time has elapsed.
We assume that the objective function $f(x;t)$ is smooth in terms of $x$ and $t$. In particular, we suppose that the shape of the objective function does not change abruptly with time.

A prediction-correction algorithm has been proposed for such an optimization problem. 
It consists of two steps (see Algorithm~\ref{algo:pred_corr_template}): 
in the prediction step, the algorithm approximates the function value at the next time step, which is unknown, and predicts a point with a good property based on the approximation;
in the correction step, it corrects the prediction based on the function value that is revealed after the prediction.
We can use the gradient descent (GD)-based method for time-invariant problems in the correction step. We refer to an algorithm that uses the GD algorithm in the correction step and does not involve the prediction step as a time-varying GD (TVGD) algorithm.
The prediction step requires a design specific to time-varying optimization, and several prediction algorithms have been proposed to reduce the asymptotical tracking error from the $O(h)$ level attained by
TVGD to $O(h^2)$ or better~\citep{simonetto2016aclassof, simonetto2017constrained,lin2019simplified, bastianello2020primal}.

\begin{figure}[h]
\begin{algorithm}[H]
\caption{Prediction-Correction Algorithm Template} 
\begin{algorithmic}[1]\label{algo:pred_corr_template}
  \REQUIRE Initial solution $x_{0\mid -1}$
\FOR{$k=0,1,2,\ldots$}
    \STATE // time $t_k$ ($:= kh$)
    \STATE Incur the loss $f(x_{k \mid k-1};t_{k})$ 
    \STATE Acquire the objective function $f(\cdot;t_{k})$
    \STATE Correct the prediction $x_{k \mid k-1}$ to $x_k$ based on $f(\cdot;t_{k})$
    \STATE Predict $x_{k+1 \mid k}$ based on approximation of $f(\cdot;t_{k+1})$
\ENDFOR
\end{algorithmic}
\end{algorithm}
\end{figure}


However, all of the existing prediction-correction algorithms are only applicable to strongly-convex (SC) functions or the sum of SC functions and convex functions with nice properties. Moreover, most of them require the Hessian or its inverse to be calculated in the prediction step; thus, they take a long computational time per iteration when the feasible region has high dimensions. In real-time optimization including time-varying optimization, decreasing the computational cost is especially crucial: the long computational time for each iteration leads to a long sampling period, which in turn leads to low tracking accuracy.

\subsection{Contributions}
This paper proposes new prediction-correction algorithms: the First-Order Approximation Minimization (FOA-Min) and the Cauchy Point algorithm (CP).
As far as we know, FOR-Min is the first proposal of a prediction-correction algorithm that is applicable to general non-convex functions and is guaranteed to achieve higher accuracy than that of the TVGD algorithm.
Table~\ref{table:algo_comparison} summarizes the accuracies and required oracles of the existing and proposed algorithms.
Although the previous studies used the tracking error $\| x_k - x_k^\ast \|$ to evaluate the algorithm's performance, here we evaluate it by using the function value or gradient norm to cover non-SC functions.
FOA-Min
is found to be more accurate
than TVGD and the existing prediction-correction algorithms for Polyak-Łojasiewicz (PL) functions in terms of the function value and for non-convex functions in terms of the gradient norm. Moreover,
it uses only the gradient and its norm in the prediction step; thus, the computational time per iteration is comparable to that of TVGD and is much shorter than that of U-FOPC~\citep{simonetto2017constrained} and AGT~\citep{simonetto2016aclassof}, which need to calculate the Hessian and its inverse, respectively.
The proposed Cauchy Point (CP) algorithm optimizes the approximation of the objective function at the next time step more precisely than does FOA-Min using the Hessian.

\begin{table*}[h] 
\centering
\caption{Comparison of accuracies and required oracles of existing and proposed algorithms. U-FOPC has no theoretical convergence guarantees for non-SC functions, and AGT cannot be applied to them.}

\begin{tabular}{c|ccc|cc}
    \toprule
    \multicolumn{1}{c}{} & \multicolumn{3}{|c|}{Existing} &  \multicolumn{2}{c}{Proposed}
    \\ \hline
     Algorithm & TVGD  & U-FOPC & AGT  & FOA-Min & CP
     \\ \cline{2-6} 
     \begin{tabular}{c} Required oracles \end{tabular} & \begin{tabular}{c} Grad \end{tabular} & \begin{tabular}{c} Grad, \\ Hessian \end{tabular} & \begin{tabular}{c} Grad, \\ Hessian inv. \end{tabular} & \begin{tabular}{c} Grad, \\ Grad norm \end{tabular} & \begin{tabular}{c} Grad, \\ Hessian \end{tabular}
     \\ \hline \hline
    \begin{tabular}{c} PL \ (-optimal) \end{tabular} & $O(h)$ & - & N/A & $O(h^2)$ & $O(h^2)$
    \\ 
    \begin{tabular}{c} Non-convex \ (-stationary) \end{tabular} & $O(\sqrt{h})$ & - & N/A & $O(h)$ & $O(h)$
    \\ \bottomrule
    \end{tabular}%

\label{table:algo_comparison}
\end{table*}

We also present convergence analyses of TVGD and the prediction-correction algorithms for non-SC optimization for the first time. 
We find that they converge linearly to the error bounds shown in Table~\ref{table:algo_comparison} for PL functions.
We also prove that these algorithms can find approximate stationary solutions of non-convex functions at some time step, and once the algorithms converge, all the subsequent iterates satisfy a desirable property in the sense of the function value and gradient norm. This implies that we can track a stationary point or can find better points after convergence.

  We need to impose additional assumptions to theoretically guarantee the improvement in accuracy obtained by the proposed methods over TVGD. We place a new assumption on the relationship between derivatives in terms of $x$ and $t$, which holds for several important problems, such as optimization of functions with a parallel shift, linear regression, and SC optimization.
We again exploit the equivalent reformulation of the optimization problem to prove that the SC optimization satisfies the assumption.

The proposed algorithms are compared with the existing ones in three numerical experiments: a non-convex toy example, linear regression using synthetic datasets, and matrix factorization using real datasets. The results demonstrate that the proposed algorithms reduce the convergence error as the theoretical analyses suggest and that FOA-Min has better tracking accuracy compared with TVGD on a practical large-scale non-convex problem.

\subsection{Recent Work on the Prediction-Correction Algorithm}
Various prediction-correction algorithms have been proposed and analyzed for solving time-varying smooth optimization problems including an SC function. \citet{simonetto2016aclassof} proposed a Taylor expansion-based prediction for unconstrained optimization, and \citet{simonetto2017constrained} extended it to constrained optimization without using the inverse of the Hessian. \citet{bastianello2020primal} developed a unified framework for time-varying optimization based on the prediction-correction paradigm and provided an extrapolation-based prediction, which is useful
when the Hessian of the objective function is time-invariant. \citet{lin2019simplified} proposed a simple and efficient prediction method without using function information. It was proved that the method achieves the same asymptotical tracking error $O(h^2)$ as those of other prediction methods.

Prediction-correction algorithms for more specific problems have also been studied recently. \citet{bastianello2020distributed, simonetto2017networked, wang2020allocation} developed algorithms to solve distributed optimization problems where multiple agents communicate and cooperate with each other to minimize an overall cost function that varies over time. \citet{xie2022powerflow} applied multiple online algorithms including a discrete-time prediction-correction algorithm to optimal power flow (OPF), a time-varying optimization problem on power systems. Four different algorithms were compared in a numerical experiment using real data, and it was concluded that a discrete-time prediction-correction algorithm is the best choice in most cases.
Prediction-correction algorithms have also been leveraged for estimating the dynamical state of a power system~\citep{song2020state} and for model predictive control (MPC)~\citep{paternain2018mpc} with theoretical guarantees.

\subsection{Notation}
Let $x^\ast(t)$ denote an optimal solution of the time-varying optimization problem $\mathrm{minimize}_{x\in\mathbb{R}^d} f(x;t)$.
For an integer $k \in \mathbb{Z}_{\geq 0}$, let $x_k^\ast$ and $f_k^\ast$ be an optimal solution and optimal value of $\mathrm{minimize}_{x\in\mathbb{R}^d} f(x;t_k)$, respectively. We will abuse the notation and denote the derivative of $f(x;t)$ in terms of $t \in \mathbb{R}$ by $\nabla_t f(x;t)$. We denote second-order derivatives by $\nabla_{ba} f(x;t):=\nabla_{b}(\nabla_{a} f(x;t))$
for $a,b \in \{x,t\}$.
For a sequence $\{ x_k \}$, let $f_k := f(x_k;t_k)$, $\nabla_{a} f_k := \nabla_{a} f(x_k;t_k)$ and $\nabla_{ba} f_k := \nabla_{ba} f(x_k;t_k)$ for $a, b \in \{ x, t \}$.

\section{Preliminary}
In this section, we introduce the TVGD and existing prediction-correction algorithms for time-varying smooth optimization and outline the theoretical analyses provided by the previous work.
\subsection{TVGD Algorithm}

The TVGD algorithm
is the most naive
method of 
time-varying smooth optimization. At each time step, TVGD updates the solution $x_k$ by using the current gradient, as described in Algorithm~\ref{algo:GD}.

\begin{figure}[h]
    \begin{algorithm}[H]
    \caption{Time-Varying Gradient Descent (TVGD)~\citep{popkov2005gradient}} 
    \begin{algorithmic}[1]\label{algo:GD}
      \REQUIRE Initial solution $x_0$,
      step size $\beta$\\
    
    \FOR{$k=0,1,2,\ldots$}
    \STATE // time $t_k$ ($:= kh$)
    \STATE Incur the loss $f_{k}$
    \STATE Acquire the objective function $f(\cdot;t_{k})$
    \STATE Initialize the sequence of corrected variables as $\hat{x}_k^0 = x_{k}$
    \FOR{$c=0,1,\ldots,C-1$}
        \STATE 
        $$
        \hat{x}_{k}^{c+1} = \hat{x}_k^{c} - \beta \nabla_x f(\hat{x}_k^c;t_{k})
        $$
    \ENDFOR
    \STATE Set the solution as $x_{k+1} = \hat{x}_k^C$
    \ENDFOR
    \end{algorithmic}
    \end{algorithm}
    \vspace{-3mm}
\end{figure}

We can prove that the tracking error $\{\| x_k - x_k^\ast \|\}$ converges linearly to $O(h)$ if the objective function is SC and the following assumptions hold~\citep{popkov2005gradient}:

\begin{assu}\label{assu:prev_sc}
$\ $
    \begin{itemize}
        \item[$\mathrm{(i)}$]
        $\forall x\in\mathbb{R}^d, \forall t\geq 0,\ \| \nabla_{xx} f(x;t) \| \leq L_1$.
        \item[$\mathrm{(ii)}$] $\forall x\in\mathbb{R}^d, \forall t\geq 0,\ \| \nabla_{tx} f(x;t) \| \leq L_2$.
        \item[$\mathrm{(iii)}$] $\forall x\in\mathbb{R}^d, \forall t\geq 0,\ \nabla_{xx} f(x;t) \succeq m\mathrm{I}$.
    \end{itemize}
\end{assu}

Assumption~\ref{assu:prev_sc}(i) and (iii) require that the objective function is twice differentiable, $L_1$-smooth, and $m$-strongly-convex in terms of $x$. Assumption~\ref{assu:prev_sc}(ii) implies that the time variation of the gradient of the objective function can be bounded.

%
%
\subsection{Prediction-Correction Algorithm Based on Taylor Expansion}
Next, let us examine the Taylor expansion-based prediction-correction algorithm proposed in \citep{simonetto2017constrained}.
In the prediction step,
we seek a solution to the following quadratic optimization problem:
\begin{align*} 
    &\min_{x \in \mathbb{R}^d}\ \hat{f}^{2, \gamma}(x;t_{k+1}) := f_k + \gamma \agbra*{\nabla_x f_k, x-x_k} + h\nabla_t f_k\\
    &+ \frac{1}{2}(x-x_k)^\top \nabla_{xx} f_k(x - x_k) + h\agbra*{\nabla_{tx}f_k, x - x_k} + \frac{h^2}{2}\nabla_{tt}f_k,
\end{align*}
where $\gamma$ is a parameter satisfying $0\leq \gamma \leq 1$.
This function becomes a second-order Taylor series approximation of the objective function at the next time step when $\gamma$ is set to $1$.
We update the prediction sequence  
$\{ \hat{x}_k^{p} \}_{p=0, \ldots, P-1}$ by applying the (time-invariant) GD algorithm to the above problem:
\begin{align} \label{eq:existing_inexact_prediction}
   &\hat{x}_k^{p+1} = \hat{x}_k^{p} -\alpha \paren{\nabla_{xx} f_k(\hat{x}_k^{p}-x_k) + h\nabla_{tx} f_k + \gamma\nabla_x f_k},\nonumber \\
   &\mathrm{where}\ \ \hat{x}_k^0 = x_k,\ \hat{x}_k^{P} = x_{k+1\mid k}.
\end{align}

See Algorithm 2 in \citep{simonetto2017constrained} for the complete procedure.

Under the same assumption as the TVGD algorithm, we can prove that the tracking error $\{\| x_k - x_k^\ast \|\}$ generated by this algorithm converges globally and the asymptotical tracking error $\lim_{k \to \infty} \| x_k - x_k^\ast \|$ can be bounded by $O(h)$ (Theorem 3 in \citep{simonetto2017constrained}).
Moreover, we can obtain a better local convergence error $O(h^2)$ when the third derivatives of the objective function are bounded, the number of prediction steps is sufficiently large, and several other conditions are satisfied (Theorem 4 in \citep{simonetto2017constrained}).

\section{Analysis of TVGD Algorithm for Non-Strongly-Convex Functions} \label{chap:analysis_of_gd}

%

%

In this section, we analyze the performance of the TVGD algorithm (Algorithm~\ref{algo:GD}) for time-varying non-SC optimization.
Though the TVGD algorithm is an existing method, it is one of our contributions to apply this algorithm to non-SC settings and to analyze its performance theoretically.
The previous studies used the tracking error $\| x_k - x_k^\ast \|$ to evaluate the algorithm's performance. This evaluation metric is valid since a point whose function value is close to the optimal value is always close to the optimal point for SC functions. However, this relationship does not hold for non-SC functions. Therefore, we decided to evaluate the algorithm's performance on the basis of the function value or gradient norm.

\subsection{Assumption} \label{sec:GD_assumptions}
We place an assumption on the time variation of the function value.
In particular, we want to assume the Lipschitzness of $f(x;t)$ in terms of $t$, while Assumption~\ref{assu:prev_sc}(ii) requires the Lipschitzness of $\nabla_x f(x;t)$ in terms of $t$. However, we do not have to assume the Lipschitzness of $f(x;\cdot)$ itself. In fact, Problem \eqref{eq:time_varying_problem} is equivalent to
\begin{align}\label{eq:time_varying_problem_f-G}
    \underset{x \in \mathbb{R}^d}{\mathrm{minimize}}\ \bar{f}(x;t_k) := [f(x;t_k) - \mathscr{G}(t_k)],\ k\in\mathbb{Z}_{\geq 0},
\end{align}
where $\mathscr{G}:\mathbb{R}\to\mathbb{R}$ is a univariate function. Therefore, we can see that it is sufficient to assume the Lipschitzness of $f(x;\cdot) - \mathscr{G}(\cdot)$:
\begin{assu}\label{assu:lip_t}
    There exists a function $\mathscr{G}:\mathbb{R}\to\mathbb{R}$ satisfying
    $
        \forall x\in\mathbb{R}^d, \forall s, t\geq 0,\ 
        | [f(x;s) - \mathscr{G}(s)] - [f(x;t) - \mathscr{G}(t)] | \leq G_2 |s - t|.
    $
\end{assu}

Under this assumption, we can prove that the optimal value of the redefined objective function $f(x;t) - \mathscr{G}(t)$ is also $G_2$-Lipschitz continuous in terms of $t$:

\begin{lemm}\label{lemm:lip_t_optimal_value}
    For any $x\in\mathbb{R}^d$ and $s, t\geq 0$, we have
\begin{align*}
    | [f(x^\ast(s);s) - \mathscr{G}(s)] - [f(x^\ast(t);t) - \mathscr{G}(t)] | \leq G_2 |s - t|.
\end{align*}
\end{lemm}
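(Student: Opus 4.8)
The plan is to exploit the standard fact that a pointwise minimum of a uniformly Lipschitz family of functions inherits the same Lipschitz constant. Write $\bar{f}(x;t) := f(x;t) - \mathscr{G}(t)$, so that the quantity to be bounded is the optimal value $\bar{f}^\ast(t) := \bar{f}(x^\ast(t);t) = f(x^\ast(t);t) - \mathscr{G}(t)$. The first observation I would record is that, since $\mathscr{G}(t)$ is constant in $x$, minimizing $f(\cdot;t)$ and minimizing $\bar{f}(\cdot;t)$ yield the same set of minimizers; hence $x^\ast(t)$ is an optimal solution of $\min_{x\in\mathbb{R}^d}\bar{f}(x;t)$ as well, and $\bar{f}^\ast(t)$ is genuinely the optimal value of the reformulated problem~\eqref{eq:time_varying_problem_f-G}.

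Next I would fix $s,t\geq 0$ and estimate $\bar{f}^\ast(s) - \bar{f}^\ast(t)$ from above. The key trick is to use $x^\ast(t)$, the minimizer at time $t$, as a \emph{suboptimal} test point at time $s$. Optimality of $x^\ast(s)$ for $\bar{f}(\cdot;s)$ gives $\bar{f}^\ast(s) = \bar{f}(x^\ast(s);s) \leq \bar{f}(x^\ast(t);s)$. Subtracting $\bar{f}^\ast(t) = \bar{f}(x^\ast(t);t)$ and then applying Assumption~\ref{assu:lip_t} with the single fixed point $x = x^\ast(t)$ yields
\[
   \bar{f}^\ast(s) - \bar{f}^\ast(t) \;\leq\; \bar{f}(x^\ast(t);s) - \bar{f}(x^\ast(t);t) \;\leq\; G_2\,|s-t|.
\]

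Finally, swapping the roles of $s$ and $t$ (now using $x^\ast(s)$ as the test point at time $t$) produces the matching bound $\bar{f}^\ast(t) - \bar{f}^\ast(s) \leq G_2\,|s-t|$. Combining the two one-sided inequalities gives the two-sided estimate $|\bar{f}^\ast(s) - \bar{f}^\ast(t)| \leq G_2\,|s-t|$, which is exactly the claimed inequality after unfolding the definition of $\bar{f}^\ast$.

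There is essentially no hard step here: the argument is the classical ``infimum of Lipschitz functions is Lipschitz'' computation, and the work is entirely in setting up the cross-comparison of optimal points across the two time instances. The only point worth flagging is that $x^\ast(s)$ and $x^\ast(t)$ must exist as genuine minimizers so that the equalities $\bar{f}^\ast(\cdot) = \bar{f}(x^\ast(\cdot);\cdot)$ hold; since the notation already posits $x^\ast(t)$ as an optimal solution at each time, existence is assumed and the proof goes through verbatim. (Had minimizers existed only approximately, one would replace the equalities by infima and pass to a limit, but that refinement is unnecessary under the stated convention.)
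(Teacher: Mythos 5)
Your proof is correct and is essentially identical to the paper's: both arguments use one time's minimizer as a suboptimal test point at the other time, apply Assumption~\ref{assu:lip_t} at that fixed point, and conclude by symmetry in $s$ and $t$. The only cosmetic difference is that the paper chains the Lipschitz bound and the optimality inequality in the opposite order, which is immaterial.
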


\begin{proof}
For any $s, t\geq 0$, we have
\begin{align*}
[f(x^\ast(s);s) - \mathscr{G}(s)] \geq [f(x^\ast(s);t) - \mathscr{G}(t)] - G_2|s-t| \geq  [f(x^\ast(t);t) - \mathscr{G}(t)] - G_2|s - t|.
\end{align*}

Since this inequality also holds when $s$ and $t$ are exchanged, $f(x^\ast(\cdot);\cdot) - \mathscr{G}(\cdot)$ is $G_2$-Lipschitz.
\end{proof}


\subsection{Analysis for Non-convex functions}
We start with an analysis of the average convergence error of the TVGD algorithm for smooth and possibly non-convex functions. We call $\hat{x}_k$ an $\epsilon$-stationary point of $f(x;t)$
if $\| \nabla_x f(\hat{x}_k;t_k) \| \leq \epsilon$ holds. 

Since Algoirthm~\ref{algo:GD} does not exploit the time variation of the function value, it is invariant to adding terms that depend only on $t$ to the objective function. Therefore, without loss of generality, we assume $\forall t\geq 0,\ \mathscr{G}(t) = 0$ in the remainder of this subsection. We also set the number of correction steps $C$ to 1 for simplicity.
   
\begin{theo}\label{theo:smooth_without_prediction_ANE}
Consider the sequence $\brc{x_k}$ generated by Algorithm~\ref{algo:GD}.
Suppose that Assumptions~\ref{assu:prev_sc}(i) and \ref{assu:lip_t} hold, and set the stepsize as $\beta = 1 / L_1$. Then, for all $k_0\in\mathbb{Z}_{\geq 0}$, 
the average of the gradient norm for $T_{k_0}:= \frac{f_{k_0} - f_{k_0}^\ast}{2h}$ iterations satisfies
$
\frac{1}{T_{k_0}}\sum_{k=k_0}^{k_0 + T_{k_0}-1} \norm*{ \nabla_x f_k}
\leq
2\sqrt{L_1(1+G_2)h}
$. 
\end{theo}

\begin{proof}
For simplicity, we prove only the statement when $k_0 = 0$. 
Since $f$ is $L_1$-smooth in terms of $x$, we have
\begin{align*}
    f(x_{k+1};t_k) - f(x_k;t_k)
    \leq
    \agbra*{\nabla_x f(x_k;t_k), x_{k+1}- x_k} + \frac{L_1}{2} \| x_{k+1} - x_k \|^2.
\end{align*}
From the update rule of $\{x_k\}$ and the Lipschitzness of $f$ in terms of $t$, we have
\begin{align}
    \paren*{\beta - \frac{L_1\beta^2}{2}}\norm*{ \nabla_x f(x_k;t_k)}^2  
    &\leq 
    f(x_{k};t_k) - f(x_{k+1};t_k) \nonumber
    \\
    &\leq 
    (f(x_{k};t_k) - f(x_{k+1};t_{k+1})) + (f(x_{k+1};t_{k+1}) - f(x_{k+1};t_{k})) \nonumber
    \\
    &\leq 
    (f(x_{k};t_k) - f(x_{k+1};t_{k+1})) + G_2h. \label{eq:GD_smooth_inequality}
\end{align}
By summing up the above inequality for all iterations $0\leq k \leq T-1$ and
setting $\beta = 1/L_1$, we obtain
\begin{align*}
    \frac{1}{2L_1} \sum_{k=0}^{T-1} \norm*{ \nabla_x f(x_k;t_k)}^2
    &\leq
    \paren*{f(x_0;t_0) - f(x_T;t_T)} + TG_2h \\
    &\leq
    \paren*{f(x_0;t_0) - f^\ast_T} + TG_2h \\
    &=
    \paren*{f(x_0;t_0) - f^\ast_0} + 2TG_2h,
\end{align*}
where the last equality holds due to Lemma~\ref{lemm:lip_t_optimal_value}.

From
the Cauchy-Schwarz inequality, when the number of iterations is $T = \frac{f(x_0;t_0) - f^\ast_0}{2h}$, we have
\begin{align*}
\frac{1}{T}\sum_{k=0}^{T-1} \norm*{ \nabla_x f(x_k;t_k)}
&\leq \sqrt{ \frac{1}{T} \sum_{k=0}^{T-1} \norm*{ \nabla_x f(x_k;t_k)}^2 }\\
&\leq \sqrt{\frac{2L_1(f(x_0;t_0) - f_0^\ast)}{T} + 4L_1G_2h} \\
&= 2\sqrt{L_1(1+G_2)h}.
\end{align*}
\end{proof}

The above results show that the TVGD algorithm can output $O(\sqrt{h})$-stationary points on average in terms of time, but they do not guarantee that every iterate becomes a good solution after finding an $O(\sqrt{h})$-stationary point. Therefore, we will present another type of theoretical guarantee: once the algorithm converges, all the subsequent iterates satisfy a desirable property in the sense of the function value and gradient norm.

\begin{theo}\label{theo:smooth_without_prediction_last}
    Suppose the same settings as in Theorem~\ref{theo:smooth_without_prediction_ANE}.
    Then, once a $2\sqrt{L_1(1+G_2)h}$-stationary point is reached at iteration $\bar{T}$,
     every subsequent iterate $x_k\ ( k\geq \bar{T})$
     satisfies at least one of the following two conditions:
     \begin{itemize}
         \item[(a)] The iterate $x_k$ is a $2\sqrt{L_1(1+G_2)h}$-stationary point of $f(x;t_k)$. 
         \item[(b)] There exists an integer $l<k$ such that $x_l$ is a $2\sqrt{L_1(1+G_2)h}$-stationary point of $f(x;t_l)$, and 
         $
             (f_k - f_k^\ast) < (f_l - f_l^\ast) + 2G_2h - \frac{1}{2L_1} \| \nabla_x f_l \|^2
        $.
     \end{itemize}
 \end{theo}
 
This theorem implies that once TVGD finds a $2\sqrt{L_1(1+G_2)h}$-stationary point, every subsequent iterate also becomes a $2\sqrt{L_1(1+G_2)h}$-stationary point, or its optimality gap is larger than that at a $2\sqrt{L_1(1+G_2)h}$-stationary point $x_{l}$
by at most
$2G_2h - \frac{1}{2L_1} \| \nabla_x f_l \|^2$. 

 \begin{proof}
    For some $k > \bar{T}$, suppose that (a) does not hold, that is, $\| \nabla_x f(x_k;t_k) \| > 2\sqrt{L_1(1+G_2)h}$ holds. Let $l<k$ be an integer satisfying
     \begin{align*}
         \| \nabla_x f(x_l;t_l) \| &\leq 2\sqrt{L_1(1+G_2)h}, \\ 
         l < \forall j < k,\ \| \nabla_x f(x_j;t_j) \| &> 2\sqrt{L_1(1+G_2)h}.
     \end{align*}
Inequality \eqref{eq:GD_smooth_inequality} with $\beta = 1/L_1$ yields
\begin{align*}
    \forall k \geq 0,\ \frac{1}{2L_1}\norm*{ \nabla_x f(x_k;t_k)}^2 
    &\leq 
    (f(x_{k};t_k) - f(x_{k+1};t_{k+1})) + G_2h,
\end{align*}
     The above inequalities and Lemma~\ref{lemm:lip_t_optimal_value} imply 
     \begin{align*}
         (f(x_{l+1};t_{l+1}) - f(x_{l};t_{l})) - (f_{l+1}^\ast - f_l^\ast) &\leq
         f(x_{l+1};t_{l+1}) - f(x_{l};t_{l}) + G_2h \\
         &\leq \frac{1}{2L_1} \paren*{4L_1G_2h - \norm*{ \nabla_x f(x_l;t_{l})}^2},\\
         l < \forall j < k,\ (f(x_{j+1};t_{j+1}) - f(x_{j};t_{j})) - (f_{j+1}^\ast - f_j^\ast) &\leq
         f(x_{j+1};t_{j+1}) - f(x_{j};t_{j}) + G_2h \\
         &\leq \frac{1}{2L_1} \paren*{4L_1G_2h - \norm*{ \nabla_x f(x_j;t_{j})}^2} \\
         &< \frac{1}{2L_1} \paren*{4L_1G_2h - 4L_1(1+G_2)h} = -2h < 0.
     \end{align*}
 By summing up the above inequalities, we can obtain
 \begin{align*}
     (f(x_{k};t_{k}) - f_{k}^\ast) - (f(x_l;t_l) - f_{l}^\ast) < \frac{1}{2L_1} \paren*{4L_1G_2h - \norm*{ \nabla_x f(x_l;t_{l})}^2},
 \end{align*}
 which implies that (b) holds.
 \end{proof}

\subsection{Analysis for PL functions}
Next, we analyze the performance of the TVGD algorithm for PL functions. Compared with the SC function, the PL function is a more general function class.
\begin{defi}\label{defi:PL}
    The time-varying function $f: \mathbb{R}^d\times\mathbb{R} \to \mathbb{R}$ is a $\mu$-PL function in terms of  $x$ when
$
    \forall x\in\mathbb{R}^d,\ \forall t\geq 0,\ 
    \frac{1}{2} \| \nabla_x f(x;t) \|^2  \geq \mu(f(x;t) - f(x^\ast(t);t)).
$
\end{defi}

It is well known that when the GD algorithm is applied to a time-invariant PL function $f(x)$, the optimality gap $\{ f(x_k)- f^\ast_k \}$ converges linearly to zero~\citep{polyak1963gradient}. By using this property and the Lipschitzness of $f(x;t)$ in terms of $t$, we can easily prove that the optimality gap converges linearly to $O(h)$.

\begin{theo} \label{theo:GD_PL_linear_convergence}
Consider the sequence $\left\{x_k\right\}$ generated by Algorithm~\ref{algo:GD}. Suppose that the objective function $f$ is a $\mu$-PL function in terms of $x$ and that Assumptions~\ref{assu:prev_sc}(i) and \ref{assu:lip_t} hold. Set the stepsize as $\beta = 1/L_1$, and let $\rho:=1-\frac{\mu}{L_1}\in[0, 1)$. Then, the optimality gap $\{ f_k- f_k^\ast \}$ is bounded as follows:
\begin{align*}
    & \forall k \in \mathbb{N},\ f_{k} - f^\ast_{k} \leq \rho^{k} (f_0 - f^\ast_0) + \frac{2(1-\rho^{k})}{1-\rho} G_2h.
\end{align*}
Therefore, the optimality gap converges linearly to an asymptotical error bound:
\begin{align*}
& \lim_{k\to\infty} (f_{k} - f^\ast_{k}) = \frac{2G_2}{1-\rho}h.
\end{align*}
\end{theo}

\begin{proof}
When the (time-invariant) GD algorithm is applied to a PL function $f(\cdot;t_k)$, we have
\begin{align} \label{eq:PL_property}
f(x_{k+1};t_{k}) - f^\ast_{k} \leq \rho (f(x_k;t_{k}) - f^\ast_{k}).
\end{align}
Therefore, we can obtain
\begin{align*}
f(x_{k+1};t_{k+1}) - f^\ast_{k+1}
&\leq 
f(x_{k+1};t_{k}) - f_k^\ast + 2G_2h\\
&\leq 
\rho (f(x_{k};t_{k}) - f_k^\ast) + 2G_2h \\
&\leq 
\rho^{k+1} (f(x_0;t_{0}) - f_0^\ast) + 2\paren*{\sum_{i=0}^k \rho^i} G_2h \\
&=
\rho^{k+1} (f(x_0;t_{0}) - f_0^\ast) + \frac{2(1-\rho^{k+1})}{1-\rho} G_2h,
\end{align*}
where the first inequality follows from Assumption~\ref{assu:lip_t} and Lemma~\ref{lemm:lip_t_optimal_value}. This inequality yields
$$
\lim_{k\to\infty} (f_{k} - f^\ast_{k}) = \frac{2G_2}{1-\rho}h  = O(h).
$$
\end{proof}

We can also prove that when we assume the Lipschitzness of the gradient in terms of $t$ instead of that of the function value, the TVGD algorithm can find an $O(h)$-stationary point, and all the subsequent iterates satisfy a desired property (see Appendix~\ref{chap:PL_lip_grad_t} for more details).
%
%

\section{Proposed Methods} \label{chap:proposed_methods}
\subsection{Prediction-Correction Algorithms for Non-Strongly-Convex Functions}
If we implement no prediction, the gap of the function values at two subsequent time steps becomes $f(x_k;t_{k+1}) - f_k = O(h)$. 
Here, we denote a predicted solution at time $t_{k+1}$ based on the information aqcuired at time $t_k$ as $x_{k+1|k}$, and
we predict $x_{k+1 \mid k}$ in a way that $f(x_{k+1 \mid k};t_{k+1}) - f_k$ becomes smaller than $O(h)$ (Lemma~\ref{lemm:func_value_error_foa_min_cp}).
The function value at the next time step can be approximated by a first-order Taylor expansion,
\begin{align*}
    \hat{f}^1(x;t_{k+1})
    &:= f_k + \agbra*{\nabla_x f_k, x-x_k} + h\nabla_t f_k.
\end{align*} However, the approximation accuracy becomes worse when $\| x - x_k \|$ is large. To avoid it, we consider the following constrained optimization problem:
\begin{align*}
    \min_{x \in \mathbb{R}^d} \hat{f}^1(x;t_{k+1})\ \ \mathrm{s.t.}\  \| x - x_k \| \leq \zeta h,
\end{align*}
where $\zeta>0$ is some constant.
When the gradient norm $\| \nabla_x f_k \|$ is not zero, the solution is
$$x_{k+1 \mid k} = x_k - \zeta h \frac{\nabla_x f_k}{\| \nabla_x f_k \|}.$$
If we further assume that $|\nabla_t f_k| - \zeta \| \nabla_x f_k \| \leq 0$ holds (we will assume so later in Assumption~\ref{assu:prediction-correction}), we can obtain $\hat{f}^1(x_{k+1 \mid k};t_{k+1}) \leq f_k$ and $f(x_{k+1 \mid k};t_{k+1}) - f_k = O(h^2)$. 

\begin{figure}[h]
    \begin{algorithm}[H]
    \caption{First-Order Approximation Minimization (FOA-Min) /  Cauchy Point (CP) } 
    \begin{algorithmic}[1]\label{algo:foa_min_cp}
      \REQUIRE Initial solution $x_{0 \mid -1}$, number of correction steps $C$, step size for correction step $\beta$, radius $\zeta$, sufficiently small positive value $\delta$ \\
    \FOR{$k=0,1,2,\ldots$}
        \STATE // time $t_k$ ($:= kh$)
        \STATE Incur the loss $f(x_{k \mid k-1};t_{k})$
        \STATE Acquire the objective function $f(\cdot;t_{k})$
        \STATE Initialize the sequence of corrected variables as $\hat{x}_k^0 = x_{k\mid k-1}$
        \FOR{$c=0,1,\ldots,C-1$}
            \STATE 
            $
            \hat{x}_{k}^{c+1} = \hat{x}_k^{c} - \beta \nabla_x f(\hat{x}_k^c;t_{k})
            $
        \ENDFOR
        \STATE Set the correction as $x_{k} = \hat{x}_k^C$
        \STATE Set \label{line:prediction_start}
        $
        g_k = \nabla_x f_k\ \  \mathrm{or}\ \  g_k = 2\nabla_x f_k - \nabla_x f(x_k;t_{k-1}) 
        $
        \IF{ $\| g_k \| \leq \delta$} \label{line:if_gk_is_small}
        \STATE $ x_{k+1\mid k} = x_k $
        \ELSE
        \STATE \ \\ \vspace{-12mm}
        \begin{align*}
            &x_{k+1\mid k} = x_k - 
            \left\{
            \begin{array}{cc}
            \zeta h \frac{g_k}{\| g_k \|} & (\textrm{FOA-Min}) \\
            \zeta h \frac{g_k}{\| g_k \|} & (\textrm{CP}, g_k^\top \nabla_{xx} f_k g_k \leq 0) \\
            \min\left\{ \frac{\|g_k \|^3}{g_k^\top \nabla_{xx} f_k g_k}, \zeta h \right\} \frac{g_k}{\| g_k \|} & (\textrm{CP}, g_k^\top \nabla_{xx} f_k g_k > 0) \\
            \end{array}
            \right.
        \end{align*}
        \ENDIF \label{line:prediction_end}
    \ENDFOR
    \end{algorithmic}
    \end{algorithm}
\end{figure}

We can also consider optimizing the second-order Taylor expansion,
\begin{align*}
    &\hat{f}^2(x;t_{k+1})
    := f_k + \agbra*{\nabla_x f_k, x-x_k} + h\nabla_t f_k\\
    &+ \frac{1}{2}(x-x_k)^\top \nabla_{xx} f_k(x - x_k) + h\agbra*{\nabla_{tx}f_k, x - x_k} + \frac{h^2}{2}\nabla_{tt}f_k
\end{align*}
with $\| x - x_k \| \leq \zeta h$. This type of problem appears in the trust-region subproblem, and its approximate solution called the Cauchy point~\citep{nocedal2006numerical} can be computed in closed form:

\begin{align*}
    x_{k+1 \mid k} =
    \left\{
        \begin{array}{cc}
            \zeta h \frac{\check{g}_k}{\| \check{g}_k \|} & (\check{g}_k^\top \nabla_{xx} f_k \check{g}_k \leq 0) \\
            \min\left\{ \frac{\|\check{g}_k \|^3}{\check{g}_k^\top \nabla_{xx} f_k \check{g}_k}, \zeta h \right\} \frac{\check{g}_k}{\| \check{g}_k \|} & (\check{g}_k^\top \nabla_{xx} f_k \check{g}_k > 0)
        \end{array}
    \right.,
\end{align*}
where $\check{g}_k$ is defined as $\check{g}_k:=\nabla_x f_k + h\nabla_{tx} f_k$ and we assume that its norm is non-zero.
Since $\nabla_{tx} f_k$ is not generally available, we need to approximate it by a backward difference $\tilde{\nabla}_{tx} f_k$. Then, an approximation of $\check{g}_k$ can be computed as
\[\nabla_x f_k + h \tilde{\nabla}_{tx} f_k = \nabla_x f_k + h \frac{ \nabla_x f_k - \nabla_x f(x_k;t_{k-1}) }{h} = 2\nabla_x f_k - \nabla_x f(x_k;t_{k-1}).\]
Again, if $|\nabla_t f_k| - \zeta \| \nabla_x f_k \| \leq 0$ holds, $f(x_{k+1 \mid k};t_{k+1}) - f_k = O(h^2)$ is satisfied even when we use this finite difference approximation.

The details of the proposed algorithms are described in Algorithm~\ref{algo:foa_min_cp}.
Although in the above explanation, $-\nabla_x f_k$ and $-(\nabla_x f_k + h \nabla_{tx} f_k)$ were chosen for the moving directions in the first- and second-order approximation optimizations, respectively, we can use both of them in both optimizations. The prediction is not implemented when the norm of $g_k$ is sufficiently small.

FOA-Min updates the iterates in the direction $-g_k$ with an adaptive stepsize normalized by the norm of $g_k$.
This normalization technique is used by common optimizers in machine learning, such as RMSProp~\citep{hinton2012neural} and Adam~\citep{kingma2014adam}, and is even applicable to large-scale problems. CP has the potential to outperform FOA-Min by minimizing the second-order Taylor expansion more precisely when the problem size is so small that Hessian can be efficiently computed.

\subsection{Assumptions}
Here, we introduce a new assumption, under which $f(x_{k+1 \mid k};t_{k+1}) - f_k$ can be improved from $O(h)$ to $O(h^2)$.

\begin{assu}\label{assu:prediction-correction}
There exists a function $\mathscr{G}:\mathbb{R}\to\mathbb{R}$ such that $f(x;\cdot) - \mathscr{G}(\cdot)$ is twice differentiable for any $x\in\mathbb{R}^d$, and it holds that
    \begin{itemize} \setlength{\itemsep}{6pt}
        \item[$\mathrm{(i)}$]
        $\forall x\in\mathbb{R}^d, \forall t\geq 0,\ |\nabla_{t} [f(x;t) -  \mathscr{G}(t)]| \leq G_2$.
        \item[$\mathrm{(ii)}$]
        $\forall x\in\mathbb{R}^d, \forall t\geq 0,\ |\nabla_{tt} [f(x;t) - \mathscr{G}(t)]| \leq L_3$.
        \item[$\mathrm{(iii)}$]
         $\exists Z \geq 0,\ \forall x\in\mathbb{R}^d, \forall t\geq 0,\ |\nabla_t [f(x;t) - \mathscr{G}(t)]| - Z \| \nabla_x f(x;t) \| \leq 0.$
    \end{itemize}
\end{assu}

We use a function $\mathscr{G}$ as in Assumption~\ref{assu:lip_t} and assume that the redefined objective function 
\[\bar{f}(x;t) := f(x;t) - \mathscr{G}(t)\]
is twice differentiable, $G_2$-Lipschitz, and $L_3$-smooth in terms of $t$ in Assumptions~\ref{assu:prediction-correction}(i)-(ii).
Although Assumption~\ref{assu:prediction-correction}(iii) seems rather strong, we will see that a wide range of time-varying optimization problems satisfy it by reformulating the optimization problem.
First, we show that the SC optimization problem satisfies it. Here, the boundedness of third derivatives, which is a common assumption in the analyses of prediction-correction algorithms (see e.g. ~\citep{bastianello2020primal, lin2019simplified}), is not required.

\begin{prop} \label{prop:SC_satisfies_assump}
Let $f(x;t)$ be an objective function satisfying Assumptions~\ref{assu:prev_sc} and \ref{assu:prediction-correction}(i)-(ii). We also assume that its gradient $\nabla_x f(x;t)$ is continuously differentiable in terms of $x$ and $t$. 
Then, the function $\check{f}(x;t) := \bar{f}(x;t) - \int_0^t \nabla_t \bar{f}(y;\tau)|_{y = x^\ast(\tau)} \mathrm{d}\tau$ has continuously differentiable gradient in terms of $x$ and $t$, satisfies Assumption~\ref{assu:prev_sc}, and satisfies Assumption~\ref{assu:prediction-correction}(i)-(iii) as follows:
    \begin{align*}
        &\forall x\in\mathbb{R}^d, \forall t\geq 0,\ | \nabla_{t} \check{f}(x;t) | \leq 2G_2,\\
        &| \nabla_{tt} \check{f}(x;t) | \leq 2L_3 + \frac{L_2^2}{m},\ |\nabla_t \check{f}(x;t)| - \frac{L_2}{m} \| \nabla_x \check{f}(x;t) \| \leq 0.
    \end{align*}
\end{prop}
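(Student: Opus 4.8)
The plan is to exploit the structural observation that $\check f$ is obtained from $f$ by subtracting a function of $t$ alone. Writing $\check{\mathscr{G}}(t) := \mathscr{G}(t) + \int_0^t \nabla_t \bar f(x^\ast(\tau);\tau)\,\mathrm{d}\tau$, we have $\check f(x;t) = f(x;t) - \check{\mathscr{G}}(t)$, so every derivative of $\check f$ that involves at least one differentiation in $x$ coincides with that of $f$: in particular $\nabla_x \check f = \nabla_x f$, $\nabla_{xx}\check f = \nabla_{xx} f$, and $\nabla_{tx}\check f = \nabla_{tx} f$. This immediately transfers Assumption~\ref{assu:prev_sc} verbatim from $f$ to $\check f$ and shows $\nabla_x\check f$ is continuously differentiable. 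The only genuinely new object is the scalar $t$-derivative, and here the subtraction is designed so that $\nabla_t\check f$ vanishes along the optimal trajectory: $\nabla_t\check f(x;t) = \nabla_t\bar f(x;t) - \nabla_t\bar f(x^\ast(t);t)$. Note also that $x^\ast(t)$ is common to $f,\bar f,\check f$, since they share the same $x$-gradient.

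With this formula, bound~(i) is immediate: by Assumption~\ref{assu:prediction-correction}(i) each term $|\nabla_t\bar f(\cdot;t)|$ is at most $G_2$, so the triangle inequality gives $|\nabla_t\check f(x;t)| \le 2G_2$. For (iii) I would note that the map $x \mapsto \nabla_t\bar f(x;t)$ has $x$-gradient equal to $\nabla_{tx}\bar f = \nabla_{tx} f$ (using symmetry of the mixed partials, which is justified by continuity of $\nabla_{tx}f$), and hence is $L_2$-Lipschitz in $x$ by Assumption~\ref{assu:prev_sc}(ii). Therefore $|\nabla_t\check f(x;t)| \le L_2\,\|x - x^\ast(t)\|$. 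Converting distance to gradient norm via the standard $m$-strong-convexity inequality $\|\nabla_x f(x;t)\| \ge m\,\|x - x^\ast(t)\|$ yields $|\nabla_t\check f(x;t)| \le \frac{L_2}{m}\|\nabla_x f(x;t)\| = \frac{L_2}{m}\|\nabla_x\check f(x;t)\|$, which is exactly (iii) with $Z = L_2/m$.

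For (ii) and for the continuous differentiability of the full gradient, the key ingredient is the regularity of the optimal trajectory. I would apply the implicit function theorem to the stationarity condition $\nabla_x f(x^\ast(t);t) = 0$: since $\nabla_x f$ is $C^1$ and $\nabla_{xx}f \succeq mI$ is invertible, $x^\ast(t)$ is continuously differentiable with $\dot x^\ast(t) = -[\nabla_{xx}f]^{-1}\nabla_{tx}f$ evaluated at $(x^\ast(t);t)$, whence $\|\dot x^\ast(t)\| \le \frac{1}{m}L_2$. Differentiating $\phi(t) := \nabla_t\bar f(x^\ast(t);t)$ by the chain rule then gives
\[
\nabla_{tt}\check f(x;t) = \nabla_{tt}\bar f(x;t) - \nabla_{tt}\bar f(x^\ast(t);t) - \nabla_{tx}f(x^\ast(t);t)^\top \dot x^\ast(t),
\]
and bounding the first two terms by $L_3$ each (Assumption~\ref{assu:prediction-correction}(ii)) and the last by $L_2 \cdot \frac{L_2}{m}$ produces $|\nabla_{tt}\check f(x;t)| \le 2L_3 + \frac{L_2^2}{m}$. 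The same differentiability of $\phi$ makes $\nabla_t\check f$ continuously differentiable, completing the $C^1$-gradient claim.

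The main obstacle is the regularity of $x^\ast(t)$: establishing its uniqueness (from strong convexity) and its $C^1$-differentiability together with the explicit derivative formula for $\dot x^\ast$ (from the implicit function theorem, which needs precisely the invertibility of $\nabla_{xx}f$ that strong convexity supplies). Once that is in hand, the three bounds reduce to triangle inequalities together with the two elementary strong-convexity facts $\|\nabla_x f\| \ge m\|x - x^\ast\|$ and $\|[\nabla_{xx}f]^{-1}\| \le 1/m$.
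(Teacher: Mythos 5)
Your proof is correct, and for most of the statement it follows the paper's own route: like the paper, you observe that $\check{f}-\bar{f}$ depends only on $t$ (so Assumption~\ref{assu:prev_sc} and the continuously differentiable gradient transfer verbatim), obtain the $2G_2$ bound by the triangle inequality, and obtain the $\nabla_{tt}$ bound via the implicit function theorem applied to $\nabla_x \bar{f}(x^\ast(t);t)=0$ followed by differentiating $t\mapsto \nabla_t\bar f(y;t)|_{y=x^\ast(t)}$ --- the paper carries out that differentiation by an explicit finite-difference splitting, which is exactly the chain rule you invoke done by hand, and both yield $2L_3+\frac{L_2^2}{m}$. Where you genuinely diverge is Assumption~\ref{assu:prediction-correction}(iii). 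The paper bounds the ratio $|\nabla_t\check f|/\|\nabla_x\check f\|$ directly, writing the numerator and denominator via Taylor's theorem as $|\agbra*{\nabla_{tx}\bar f(y_1;t),\,x-x^\ast(t)}|$ and $\norm*{\nabla_{xx}\bar f(y_2;t)\,(x-x^\ast(t))}$ with intermediate points $y_1,y_2$, and then invoking the singular-value estimate of Lemma~\ref{lemm:ratio_between_inner_products}. You instead prove two one-sided inequalities --- $|\nabla_t\check f(x;t)|\leq L_2\|x-x^\ast(t)\|$, via Schwarz symmetry of the mixed partial (justified by your continuity hypothesis) and the $L_2$-bound on $\nabla_{tx}f$, and $\|\nabla_x f(x;t)\|\geq m\|x-x^\ast(t)\|$, via strong monotonicity of the gradient --- and divide. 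Your route is more elementary: it dispenses with Lemma~\ref{lemm:ratio_between_inner_products} entirely, it handles the case $x=x^\ast(t)$ automatically (both sides vanish) rather than as a separate case, and it sidesteps the paper's mean-value representation of the vector-valued difference $\nabla_x\bar f(x;t)-\nabla_x\bar f(x^\ast(t);t)$ by a single intermediate point $y_2$, which strictly speaking requires an integral-form remainder; the strong-monotonicity inequality avoids that subtlety while producing the same constant $Z=\frac{L_2}{m}$.
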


This proposition claims that we can redefine the objective function equivalently to satisfy Assumption~\ref{assu:prediction-correction}(iii) while preserving other assumptions.

\begin{proof}
    Since $\check{f}(x;t) - \bar{f}(x;t)$ does not depend on $x$, $\check{f}(x;t)$ satisfies Assumptions~\ref{assu:prev_sc}, and $\nabla_x \check{f}(x;t) = \nabla_x f(x;t)$ is continuously differentiable in terms of $x$ and $t$. The differentiability and Lipschitzness of $\check{f}(x;t)$ in terms of $t$  follows from
    \begin{align*}
        \nabla_{t} \check{f}(x;t) &= \nabla_{t} \bar{f}(x;t) - \nabla_{t} \bar{f}(y;t)|_{y = x^\ast(t)},\\
        | \nabla_{t} \check{f}(x;t) | &\leq | \nabla_{t} \bar{f}(x;t) | + | \nabla_{t} \bar{f}(y;t)|_{y = x^\ast(t)} | \leq 2G_2.
    \end{align*}
    Let us prove that the remaining Assumptions~\ref{assu:prediction-correction}(ii)-(iii) also hold in the following.

    \subsubsection*{(a) $| \nabla_{tt} \check{f}(x;t) | \leq 2L_3 + \frac{L_2^2}{m}$}$\ $

For any $t \geq 0$, the first-order optimality for $\bar{f}(\cdot;t)$ yields
\begin{align*}
    \nabla_x \bar{f}(x^\ast(t);t) = 0.
\end{align*}
Since the Hessian matrix $\nabla_{xx} \bar{f}(x;t)$ is regular due to Assumption~\ref{assu:prev_sc}(iii), and $\nabla_x \bar{f}(x;t)$ is continuously differentiable in terms of $x$ and $t$, we can apply the implicit function theorem:
\begin{align*}
    \frac{\mathrm{d}x^\ast(t)}{\mathrm{d}t} = - [\nabla_{xx} \bar{f}(x^\ast(t);t)]^{-1} \nabla_{tx} \bar{f}(y;t)|_{y = x^\ast(t)}.
\end{align*}

Next, we consider the following finite difference: 
\begin{align}
    &\frac{\nabla_t \bar{f}(y;t+\Delta t)|_{y = x^\ast(t + \Delta t)} - \nabla_t \bar{f}(y;t)|_{y = x^\ast(t)}}{\Delta t} \label{eq:limiting_value} \\
    &= \frac{\nabla_t \bar{f}(y;t+\Delta t)|_{y = x^\ast(t + \Delta t)} - \nabla_t \bar{f}(y;t)|_{y = x^\ast(t + \Delta t)}}{\Delta t} - \frac{\nabla_t \bar{f}(y;t)|_{y = x^\ast(t + \Delta t)} - \nabla_t \bar{f}(y;t)|_{y = x^\ast(t)}}{\Delta t}. \nonumber
\end{align}
   When $\Delta t$ goes to 0, the first term in the last line converges to $\nabla_{tt} \bar{f}(y;t)|_{y = x^\ast(t)}$, and the second term in the last line converges to
\begin{align*}
   \agbra*{ \nabla_{tx} \bar{f}(y;t)|_{y = x^\ast(t)}, \frac{\mathrm{d}}{\mathrm{d}t}x^\ast(t) } = - \nabla_{tx} \bar{f}(y;t)|_{y = x^\ast(t)}^\top  [\nabla_{xx} \bar{f}(x^\ast(t);t)]^{-1} \nabla_{tx} \bar{f}(y;t)|_{y = x^\ast(t)}.
\end{align*} This implies that \eqref{eq:limiting_value} also converges in the limit of $\Delta t \to 0$, and the limiting value is $\nabla_t(\nabla_t \bar{f}(y;t)|_{y = x^\ast(t)})$.
Hence, we can obtain
\begin{align*}
 & | \nabla_{tt} \check{f}(x;t) |\\
 & \leq | \nabla_{tt} \bar{f}(x;t)| + |\nabla_t (\nabla_t \bar{f}(y;t)|_{y = x^\ast(t)}) |\\
 & = \abs*{ \nabla_{tt} \bar{f}(x;t) } + \abs*{ \nabla_{tt} \bar{f}(y;t)|_{y = x^\ast(t)} + \nabla_{tx} \bar{f}(y;t)|_{y = x^\ast(t)}^\top  [\nabla_{xx} \bar{f}(x^\ast(t);t)]^{-1} \nabla_{tx} \bar{f}(y;t)|_{y = x^\ast(t)}} \\
 & \leq \abs*{ \nabla_{tt} \bar{f}(x;t) } + \abs*{ \nabla_{tt} \bar{f}(y;t)|_{y = x^\ast(t)} } + \abs*{ \nabla_{tx} \bar{f}(y;t)|_{y = x^\ast(t)}^\top  [\nabla_{xx} \bar{f}(x^\ast(t);t)]^{-1} \nabla_{tx} \bar{f}(y;t)|_{y = x^\ast(t)} } \\
 &\leq 2L_3 + \frac{L_2^2}{m},
\end{align*}
where the last inequality holds due to Assumptions~\ref{assu:prev_sc}(ii)-(iii) and \ref{assu:prediction-correction}(ii).

\subsubsection*{(b) $|\nabla_t \check{f}(x;t)| - \frac{L_2}{m} \| \nabla_x \check{f}(x;t) \| \leq 0$} $\ $

By the definition of $\check{f}(x;t)$ and $\nabla_x \bar{f}(x^\ast(t);t) = 0$, we have
\begin{align*}
    |\nabla_t \check{f}(x;t)| &= \abs*{ \nabla_t \bar{f}(x;t) - \nabla_t \bar{f}(y;t)|_{y = x^\ast(t)} } \\
    \| \nabla_x \check{f}(x;t) \| &= \| \nabla_x \bar{f}(x;t) - \nabla_x \bar{f}(x^\ast(t);t) \|.
\end{align*}
Since $\bar{f}(x;t)$ is SC in terms of $x$, a stationary point is always an optimal point.
Thus, when $\| \nabla_x \check{f}(x;t) \| = 0$ holds, we get $x = x^\ast(t)$, which yields $|\nabla_t \check{f}(x;t)| - \frac{L_2}{m} \| \nabla_x \check{f}(x;t) \| = \abs*{ \nabla_t \bar{f}(x;t) - \nabla_t \bar{f}(y;t)|_{y = x^\ast(t)} } = 0$.
When $\| \nabla_x \check{f}(x;t) \| = 0$ does not hold, we have
\begin{align*}
    \frac{|\nabla_t \check{f}(x;t)|}{ \| \nabla_x \check{f}(x;t) \|}
    &= \frac{\abs*{ \nabla_t \bar{f}(x;t) - \nabla_t \bar{f}(y;t)|_{y = x^\ast(t)} }}{\| \nabla_x \bar{f}(x;t) - \nabla_x \bar{f}(x^\ast(t);t) \|} \\
    &\leq \frac{\abs*{ \agbra*{ \nabla_{tx} \bar{f}(y;t)|_{y = y_1}, x - x^\ast(t) }}}{\norm*{\agbra*{ \nabla_{xx} \bar{f}(y_2;t), x - x^\ast(t)}}} \leq \frac{L_2}{m},
\end{align*}
where the first inequality follows from Taylor's theorem, and
$y_1, y_2\in[\min(x, x^\ast(t)), \max(x^\ast(t), x)]$ (the operators $\min(\cdot)$ and $\max(\cdot)$ are element-wise).
The last inequality holds due to Assumption~\ref{assu:prev_sc}(ii) and (iii) and Lemma~\ref{lemm:ratio_between_inner_products} in Appendix~\ref{chap:proofs_for_section4}.
\end{proof}

The next proposition implies that when a function satisfies Assumption~\ref{assu:prediction-correction}(iii), nonlinear transformations of it still satisfy this assumption.

\begin{prop} \label{prop:nonlinear_transform_satisfies_assump}
    Given a nonlinear transformation $f_1(y):\mathbb{R}^m \to \mathbb{R}$ and a time-varying function $f_2(x;t):\mathbb{R}^d \times \mathbb{R} \to \mathbb{R}^m$, define $f(x;t) := f_1(f_2(x;t))$.
    Suppose that $\nabla_x f_2(x;t)$ is row full rank for any $x \in \mathbb{R}^d$ and $t \geq 0$, and let $\sigma_{\min}(x;t)$ be its minimum singular value.
    Then, we have
    $ 
     \frac{| \nabla_t f(x;t) |}{ \| \nabla_x f(x;t) \|} \leq \frac{\|\nabla_t f_2(x;t)\|}{ \sigma_{\min}(x;t) }.
    $
    In particular, when $m=1$, we have
    $
     \frac{|\nabla_t f(x;t)|}{ \| \nabla_x f(x;t) \|} \leq \frac{|\nabla_t f_2(x;t)|}{ \| \nabla_x f_2(x;t) \|}.
    $
\end{prop}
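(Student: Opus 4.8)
The plan is to reduce everything to the chain rule together with one elementary singular-value inequality. Writing $y := f_2(x;t) \in \mathbb{R}^m$ and letting $J := \nabla_x f_2(x;t)$ denote the $m\times d$ Jacobian (its $i$-th row being the gradient of the $i$-th component of $f_2$), the chain rule gives
\begin{align*}
\nabla_t f(x;t) &= \agbra*{\nabla f_1(y),\, \nabla_t f_2(x;t)}, \\
\nabla_x f(x;t) &= J^\top \nabla f_1(y).
\end{align*}
Thus both derivatives are linear images of the single vector $\nabla f_1(y)$, and the strategy is to bound the numerator from above and the denominator from below so that the common factor $\norm*{\nabla f_1(y)}$ cancels, leaving a quantity that depends only on $f_2$ — which is exactly what is needed, since $f_1$ is an arbitrary nonlinear map.

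For the numerator I would simply apply the Cauchy--Schwarz inequality to obtain $\abs*{\nabla_t f(x;t)} \leq \norm*{\nabla f_1(y)}\,\norm*{\nabla_t f_2(x;t)}$.

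The key step is the lower bound on the denominator. Because $J$ has full row rank by hypothesis, its transpose $J^\top$ has full column rank; consequently all $m$ singular values are positive, the smallest being $\sigma_{\min}(x;t) = \sigma_{\min}(J)$, and the standard inequality $\norm*{J^\top w} \geq \sigma_{\min}(x;t)\,\norm*{w}$ holds for every $w\in\mathbb{R}^m$ (seen at once by expanding $\norm*{J^\top w}^2$ over a singular value decomposition of $J$). Applying this with $w = \nabla f_1(y)$ yields $\norm*{\nabla_x f(x;t)} \geq \sigma_{\min}(x;t)\,\norm*{\nabla f_1(y)}$. Dividing the numerator bound by the denominator bound and cancelling $\norm*{\nabla f_1(y)}$ establishes the general inequality. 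The one subtlety is that the argument is meaningful precisely when $\nabla f_1(y)\neq 0$; otherwise both $\nabla_t f$ and $\nabla_x f$ vanish, so the ratio is understood on the set where the denominator is nonzero, consistent with how Assumption~\ref{assu:prediction-correction}(iii) is invoked.

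For the special case $m=1$, the Jacobian $J = \nabla_x f_2(x;t)$ is a single row vector whose unique singular value is its Euclidean norm $\norm*{\nabla_x f_2(x;t)}$; substituting $\sigma_{\min}(x;t) = \norm*{\nabla_x f_2(x;t)}$ into the general bound (and noting $\norm*{\nabla_t f_2(x;t)} = \abs*{\nabla_t f_2(x;t)}$ for scalar output) gives the sharper statement, which is in fact an equality since $\nabla_t f = f_1'(y)\nabla_t f_2$ and $\nabla_x f = f_1'(y)\nabla_x f_2$ share the same scalar factor. I do not expect any genuine obstacle: the whole proof rests on the singular-value inequality, which is standard linear algebra, and the full-row-rank assumption enters only to guarantee $\sigma_{\min}(x;t)>0$ so that $\norm*{\nabla_x f}$ cannot collapse relative to $\norm*{\nabla f_1(y)}$.
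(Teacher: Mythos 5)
Your proof is correct and takes essentially the same route as the paper: the chain rule expresses $\nabla_t f$ and $\nabla_x f$ as linear images of $\nabla f_1(f_2(x;t))$, after which an elementary Cauchy--Schwarz plus smallest-singular-value argument gives the bound, with the $m=1$ case following because the single singular value of a row vector is its norm. The only (cosmetic) difference is that the paper delegates the linear algebra to a separate technical lemma (Lemma~\ref{lemm:ratio_between_inner_products}), proved by inserting a right pseudo-inverse before applying Cauchy--Schwarz, whereas you bound the numerator and denominator separately and cancel $\|\nabla f_1\|$; your explicit remark about the degenerate case $\nabla f_1 = 0$ is a small point of care the paper leaves implicit.
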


\begin{proof}
The derivative and gradient of $f$ can be written as
    \begin{align*}
    \nabla_t f(x;t) &= \agbra*{ \nabla_y f_1(f_2(x;t)), \nabla_t f_2(x;t)}, \\
    \nabla_x f(x;t) &= \agbra*{ \nabla_y f_1(f_2(x;t)), \nabla_x f_2(x;t)}.
    \end{align*}
    Thus,  Lemma~\ref{lemm:ratio_between_inner_products} yields
    $$ 
     \frac{| \nabla_t f(x;t) |}{ \| \nabla_x f(x;t) \|} \leq \frac{\|\nabla_t f_2(x;t)\|}{ \sigma_{\min}(x;t) }.
    $$    
   When $m=1$ holds, we have $\sigma_{\min}(x;t) = \| \nabla_x f_2(x;t) \|$, which yields
    $$ 
     \frac{| \nabla_t f(x;t) |}{ \| \nabla_x f(x;t) \|} \leq \frac{|\nabla_t f_2(x;t)|}{ \| \nabla_x f_2(x;t) \| }.
    $$    
\end{proof}


By exploiting the above proposition, we can see that an important function class satisfies Assumption~\ref{assu:prediction-correction}(iii).

\begin{coro} \label{coro:linear_satisfies_assump}
Let $A(t) \in \mathbb{R}^{m \times d}$ be a time-varying row full rank matrix. Denote its minimum singular value by $\sigma_{\min}(t)$ and suppose that its derivative is bounded as $\|A'(t) \| \leq G_A$. Given the matrix $A(t)$, a time-varying vector $b(t)\in \mathbb{R}^{m}$ whose derivative is bounded as $\|b'(t) \| \leq G_B$, and a nonlinear transformation $f_1(y):\mathbb{R}^m \to \mathbb{R}$, define $f(x;t) = f_1(\agbra*{A(t), x } + b(t))$. Then, when $\| x \| \leq R$ holds, we have
$
     \frac{|\nabla_t f(x;t)|}{ \| \nabla_x f(x;t) \|} \leq \frac{ G_AR + G_B}{ \sigma_{\min}(t) }.
$
\end{coro}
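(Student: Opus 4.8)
The plan is to reduce this corollary to Proposition~\ref{prop:nonlinear_transform_satisfies_assump}, which already treats the general composition $f = f_1 \circ f_2$. I would take the affine inner map $f_2(x;t) := \agbra*{A(t), x} + b(t) \in \mathbb{R}^m$, so that $f(x;t) = f_1(f_2(x;t))$ is exactly of the form required by the proposition. The only remaining work is then to compute and bound the two quantities $\|\nabla_t f_2(x;t)\|$ and $\sigma_{\min}(x;t)$ that appear on the right-hand side of that proposition's estimate.

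For the numerator, the derivative in $t$ of the affine map is $\nabla_t f_2(x;t) = A'(t)\,x + b'(t)$, so by the triangle inequality together with submultiplicativity of the operator norm and the hypotheses $\|A'(t)\| \leq G_A$, $\|b'(t)\| \leq G_B$, and $\|x\| \leq R$, I would bound
\begin{align*}
\|\nabla_t f_2(x;t)\| \leq \|A'(t)\|\,\|x\| + \|b'(t)\| \leq G_A R + G_B.
\end{align*}
For the denominator, the Jacobian of $f_2(\cdot;t)$ is simply $\nabla_x f_2(x;t) = A(t)$, which is constant in $x$; the row-full-rank hypothesis on $A(t)$ is precisely what makes Proposition~\ref{prop:nonlinear_transform_satisfies_assump} applicable, and it gives $\sigma_{\min}(x;t) = \sigma_{\min}(t)$ for every $x$. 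Substituting both into the proposition's bound $\frac{|\nabla_t f(x;t)|}{\|\nabla_x f(x;t)\|} \leq \frac{\|\nabla_t f_2(x;t)\|}{\sigma_{\min}(x;t)}$ yields the claimed inequality.

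There is essentially no hard step here: once Proposition~\ref{prop:nonlinear_transform_satisfies_assump} is available, the corollary is a direct specialization to an affine inner map, and the only things to verify are the elementary derivative computations $\nabla_t f_2 = A'(t)x + b'(t)$ and $\nabla_x f_2 = A(t)$ together with the two norm bounds. The one point worth stating carefully is that the minimum singular value of the Jacobian is independent of $x$ (it equals $\sigma_{\min}(t)$), so that the $x$-dependence enters only through $\|x\| \leq R$ in the numerator; this is exactly what makes the resulting bound uniform over the ball $\{\,\|x\| \leq R\,\}$.
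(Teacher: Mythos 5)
Your proposal is correct and matches the paper's intended argument: the corollary is stated as a direct consequence of Proposition~\ref{prop:nonlinear_transform_satisfies_assump} applied to the affine inner map $f_2(x;t) = \agbra*{A(t),x} + b(t)$, with the same bound $\|\nabla_t f_2(x;t)\| = \|A'(t)x + b'(t)\| \leq G_A R + G_B$ and the observation that the Jacobian $\nabla_x f_2(x;t) = A(t)$ has minimum singular value $\sigma_{\min}(t)$ independent of $x$. The paper leaves these verifications implicit, so your write-up simply fills in the details of the same route.
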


This corollary implies that the cost function in the linear regression $\| \agbra*{A(t), x } - b(t) \|^2$ and functions with a parallel shift $g(x + h(t))$ also satisfy Assumption~\ref{assu:prediction-correction}(iii).

In Section~\ref{chap:analysis_of_gd}, the optimal value of the objective function was proved to be Lipschitz continuous under Assumption~\ref{assu:lip_t}. In the following, we prove a stronger result that the optimal value is constant under Assumption~\ref{assu:prediction-correction}. 
It plays a pivotal role to improve $f(x_{k+1 \mid k};t_{k+1}) - f_k$ from $O(h)$ to $O(h^2)$.

\begin{lemm} \label{lemm:optimal_value_const_assu4}
   When Assumption~\ref{assu:prediction-correction}(iii) holds, we have 
   $\forall t \geq 0,\ \bar{f}(x^\ast(t);t) = const.$
\end{lemm}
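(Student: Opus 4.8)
The plan is to reduce the claim to showing that the optimal-value function $\phi(t) := \bar{f}(x^\ast(t);t)$ has vanishing derivative in $t$, and the single fact that makes this work is that $x^\ast(t)$ is an \emph{unconstrained} minimizer. Indeed, first-order optimality over $\mathbb{R}^d$ gives $\nabla_x \bar{f}(x^\ast(t);t) = \nabla_x f(x^\ast(t);t) = 0$ for every $t$. Substituting $x = x^\ast(t)$ into Assumption~\ref{assu:prediction-correction}(iii) then forces
$$
|\nabla_t \bar{f}(x^\ast(t);t)| \le Z\,\|\nabla_x f(x^\ast(t);t)\| = 0,
$$
so that $\nabla_t \bar{f}(x^\ast(t);t) = 0$ identically. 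This pointwise identity is the engine of the whole proof.

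Morally the result is now just the envelope theorem: writing $\phi'(t) = \langle \nabla_x \bar{f}(x^\ast(t);t), \dot{x}^\ast(t)\rangle + \nabla_t \bar{f}(x^\ast(t);t)$, the first term vanishes by optimality and the second by the identity above, giving $\phi' \equiv 0$ and hence $\phi$ constant. I would present this as the guiding idea, but the honest version must avoid assuming that $x^\ast(t)$ is single-valued or differentiable, and this is exactly where the difficulty lies in the non-strongly-convex setting.

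To make the argument rigorous without any regularity of the minimizer, I would estimate the one-sided Dini derivatives of $\phi$ by comparing \emph{only} to the fixed point $x^\ast(t)$. For $s>t$, optimality at time $s$ gives $\phi(s) \le \bar{f}(x^\ast(t);s)$, so
$$
\frac{\phi(s)-\phi(t)}{s-t} \le \frac{\bar{f}(x^\ast(t);s) - \bar{f}(x^\ast(t);t)}{s-t} \xrightarrow{\,s\to t^+\,} \nabla_t \bar{f}(x^\ast(t);t) = 0,
$$
which yields $D^+\phi(t) \le 0$ at every $t$; the limit is legitimate because $\tau \mapsto \bar{f}(x^\ast(t);\tau)$ is differentiable by the twice-differentiability hypothesis in Assumption~\ref{assu:prediction-correction}. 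The same comparison from the left (dividing by $s-t<0$) gives $D_-\phi(t) \ge 0$ everywhere. Crucially, both bounds compare to the single point $x^\ast(t)$, so no continuity of the trajectory $t\mapsto x^\ast(t)$ is needed.

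Finally I would upgrade these pointwise Dini bounds to genuine constancy. Since Assumption~\ref{assu:prediction-correction}(i) implies Assumption~\ref{assu:lip_t}, Lemma~\ref{lemm:lip_t_optimal_value} shows $\phi$ is $G_2$-Lipschitz, hence absolutely continuous and differentiable almost everywhere with $\phi(b)-\phi(a)=\int_a^b \phi'(t)\,\mathrm{d}t$. At every point of differentiability the two Dini bounds force $0 \le \phi'(t) \le 0$, i.e. $\phi'=0$ almost everywhere, and the integral representation then gives $\phi(b)=\phi(a)$ for all $a,b\ge 0$. The main obstacle throughout is precisely the step I have been circumventing --- legitimizing the envelope/Danskin differentiation when $x^\ast(\cdot)$ may be neither unique nor differentiable --- and the Dini-derivative comparison together with Lipschitz absolute continuity is the device I would use to sidestep it.
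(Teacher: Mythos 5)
Your proposal is correct, and its engine is exactly the paper's: first-order optimality kills $\nabla_x \bar{f}(x^\ast(t);t)$, and Assumption~\ref{assu:prediction-correction}(iii) then forces $\nabla_t \bar{f}(y;t)|_{y=x^\ast(t)}=0$. Where you diverge is in how this pointwise identity is turned into constancy of $\phi(t):=\bar f(x^\ast(t);t)$. The paper simply writes $\frac{\mathrm{d}}{\mathrm{d}t}\bar f(x^\ast(t);t)=\langle \nabla_x\bar f(x^\ast(t);t),\frac{\mathrm{d}}{\mathrm{d}t}x^\ast(t)\rangle+\nabla_t \bar f(y;t)|_{y=x^\ast(t)}=0$ and concludes; this tacitly assumes the minimizer trajectory $t\mapsto x^\ast(t)$ is single-valued and differentiable, which Assumption~\ref{assu:prediction-correction}(iii) alone does not provide in the non-convex setting the lemma is intended for (the paper justifies such regularity via the implicit function theorem only in the strongly convex case of Proposition~\ref{prop:SC_satisfies_assump}). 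Your route --- one-sided comparison against the \emph{fixed} point $x^\ast(t)$ to get $D^+\phi(t)\leq 0$ and $D_-\phi(t)\geq 0$, then Lipschitzness of $\phi$ from Lemma~\ref{lemm:lip_t_optimal_value} to obtain absolute continuity and integrate $\phi'=0$ a.e. --- needs no regularity of $x^\ast(\cdot)$ whatsoever and thus repairs a genuine gap in the paper's one-line argument. The only caveat is bookkeeping of hypotheses: the lemma is stated under part (iii) alone, while your final step additionally invokes part (i) (via Lemma~\ref{lemm:lip_t_optimal_value}) and the differentiability-in-$t$ preamble of Assumption~\ref{assu:prediction-correction}; since the full assumption is in force everywhere the lemma is applied, this is harmless, and your implicit use of (i) is far milder than the paper's implicit use of a differentiable minimizer path, which lies outside the stated assumptions entirely.
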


\begin{proof}
    Since we can get $\abs*{ \nabla_t \bar{f}(y;t)|_{y=x^\ast(t)} } \leq Z \| \nabla_x \bar{f}(x^\ast(t);t) \| = 0$ from Assumption~\ref{assu:prediction-correction}(iii), we have
    \[
        \frac{\mathrm d}{\mathrm d t}\bar f(x^\ast(t);t)
        =\agbra*{ \nabla_x\bar f(x^\ast(t);t), \frac{\mathrm{d}}{\mathrm{d}t}x^\ast(t) }+\nabla_t \bar{f}(y;t)|_{y=x^\ast(t)} =0,
    \]
    which implies $\bar{f}(x^\ast(t);t)$ is constant.
    \end{proof}

\subsection{Convergence Error Analysis}
As in Section~\ref{chap:analysis_of_gd}, without loss of generality, we can assume $\forall t\geq 0,\ \mathscr{G}(t)=0$  in the remainder of this subsection since Algoirthm~\ref{algo:foa_min_cp} does not exploit the time variation of the function value.

The following lemma shows that $f(x_{k+1\mid k};t_{k+1}) - f(x_k;t_k)$ becomes the same order as that of TVGD under the same settings and can be improved if Assumption~\ref{assu:prediction-correction} holds as well. 

\begin{lemm}\label{lemm:func_value_error_foa_min_cp}
    Consider the sequences $\{ x_k \}$ and $\{ x_{k+1\mid k} \}$ generated by
    Algorithm~\ref{algo:foa_min_cp}.
    \begin{itemize}
        \item[(a)]
    Suppose that Assumptions~\ref{assu:prev_sc}(i)-(ii) and \ref{assu:lip_t} hold.
    Then, $f(x_{k+1\mid k};t_{k+1}) - f(x_k;t_k) = O(h)$.
        \item[(b)]
    Suppose that Assumptions~\ref{assu:prev_sc}(i)-(ii) and \ref{assu:prediction-correction} hold.
    If the parameters $\zeta, \delta$ satisfy $\zeta \geq Z$ and $\delta \leq h$,
    the prediction yields better bounds: $f(x_{k+1\mid k};t_{k+1}) - f(x_k;t_k) = O(h^2)$.
    \end{itemize}
\end{lemm}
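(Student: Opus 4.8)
The plan is to handle both parts with one decomposition and then specialize. The single fact common to every branch of the prediction rule in Algorithm~\ref{algo:foa_min_cp} is that the predicted step is short: whether we take the full-radius step, the Cauchy interior step, or set $x_{k+1\mid k}=x_k$, we always have $\|x_{k+1\mid k}-x_k\|\le \zeta h$. I would first record this bound, and then write $f(x_{k+1\mid k};t_{k+1})-f(x_k;t_k)=\mathrm{(I)}+\mathrm{(II)}$, where $\mathrm{(I)}:=f(x_{k+1\mid k};t_{k+1})-f(x_{k+1\mid k};t_k)$ is the time-variation at the fixed point $x_{k+1\mid k}$ and $\mathrm{(II)}:=f(x_{k+1\mid k};t_k)-f(x_k;t_k)$ is the space-variation at the fixed time $t_k$.

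For part (a) this already suffices. Term $\mathrm{(I)}$ is bounded by $G_2 h=O(h)$ directly from Assumption~\ref{assu:lip_t} with $\mathscr{G}=0$. Term $\mathrm{(II)}$ I would bound by $L_1$-smoothness in $x$ (Assumption~\ref{assu:prev_sc}(i)): $\mathrm{(II)}\le \langle \nabla_x f_k, x_{k+1\mid k}-x_k\rangle+\tfrac{L_1}{2}\|x_{k+1\mid k}-x_k\|^2$, whose quadratic term is $O(h^2)$ and whose linear term is at most $\|\nabla_x f_k\|\,\zeta h=O(h)$; in fact for the descent direction $g_k=\nabla_x f_k$ the linear term is nonpositive. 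Summing gives $O(h)$.

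For part (b) the refinement comes from Assumption~\ref{assu:prediction-correction}. For $\mathrm{(I)}$ I would Taylor-expand in $t$ and use $L_3$-smoothness (Assumption~\ref{assu:prediction-correction}(ii)) to get $\mathrm{(I)}=h\nabla_t f(x_{k+1\mid k};t_k)+O(h^2)$; then Assumption~\ref{assu:prediction-correction}(iii) together with the gradient bound $\|\nabla_x f(x_{k+1\mid k};t_k)\|\le\|\nabla_x f_k\|+L_1\zeta h$ gives $\mathrm{(I)}\le hZ\|\nabla_x f_k\|+O(h^2)$. For $\mathrm{(II)}$, $L_1$-smoothness again yields the linear term $\langle\nabla_x f_k, x_{k+1\mid k}-x_k\rangle$ plus $O(h^2)$. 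The heart of the argument is a case split on which branch is active: when $\|g_k\|\le\delta$, the hypotheses $\delta\le h$ and $\|\nabla_{tx}f_k\|\le L_2$ force $\|\nabla_x f_k\|=O(h)$, so both terms are $O(h^2)$; when the full step $\zeta h$ is taken (FOA-Min, and CP in the non-positive-curvature or boundary case), the linear term equals $-\zeta h\|\nabla_x f_k\|$ for $g_k=\nabla_x f_k$, so combining with $\mathrm{(I)}$ gives $h(Z-\zeta)\|\nabla_x f_k\|+O(h^2)\le O(h^2)$ precisely because $\zeta\ge Z$; and when CP takes the interior step $s_k=\|g_k\|^3/(g_k^\top\nabla_{xx}f_k g_k)<\zeta h$, the curvature bound $g_k^\top\nabla_{xx}f_k g_k\le L_1\|g_k\|^2$ forces $s_k\ge\|g_k\|/L_1$, hence $\|g_k\|<L_1\zeta h=O(h)$, which again makes $\|\nabla_x f_k\|=O(h)$ and all terms $O(h^2)$.

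I expect the main obstacle to be the Cauchy-point branch: one must check that the interior (short) step and the finite-difference surrogate $g_k=2\nabla_x f_k-\nabla_x f(x_k;t_{k-1})\approx\check{g}_k=\nabla_x f_k+h\nabla_{tx}f_k$ do not spoil the cancellation that $\zeta\ge Z$ provides for the full step. The clean resolution, as sketched, is that whenever the step is strictly interior the gradient is already $O(h)$, so the delicate cancellation is only needed on the boundary, where $g_k$ enters through a unit direction and the $O(h)$ discrepancy between $g_k$ and $\nabla_x f_k$ (controlled by $\|\nabla_{tx}f_k\|\le L_2$) is absorbed into the $O(h^2)$ remainder. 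Carefully tracking this $g_k$ versus $\nabla_x f_k$ discrepancy and the degenerate $\|g_k\|\le\delta$ case is where the bookkeeping is heaviest.
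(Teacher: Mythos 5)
Your proof is correct, and part (a) is essentially the paper's own argument: the same split into a time-variation term and a space-variation term, the same uniform bound $\|x_{k+1\mid k}-x_k\|\le \zeta h$ across all branches, and the same absorption of the $g_k$-versus-$\nabla_x f_k$ discrepancy (of size $h\|\tilde{\nabla}_{tx}f_k\|\le L_2 h$) into the $O(h^2)$ remainder. For part (b), however, you take a genuinely different route on the CP branch. The paper handles CP by a model-comparison argument: it lets $\hat{x}_{k+1\mid k}$ be the FOA-Min point, invokes the Cauchy-point property $\hat{f}(x_{k+1\mid k})\le \hat{f}(\hat{x}_{k+1\mid k})$ for the quadratic model $\hat{f}$, bounds both model errors $f(\cdot;t_{k+1})-\hat{f}(\cdot)$ by $O(h^2)$ via Taylor's theorem at points within $\zeta h$ of $x_k$, and thereby reduces CP to the already-proved FOA-Min bound. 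You instead treat CP directly through a trichotomy on the step, and your key observation — that in the interior case $s_k=\|g_k\|^3/(g_k^\top\nabla_{xx}f_k g_k)<\zeta h$ the curvature bound $g_k^\top\nabla_{xx}f_k g_k\le L_1\|g_k\|^2$ from Assumption~\ref{assu:prev_sc}(i) forces $\|g_k\|<L_1\zeta h$, so the gradient is already $O(h)$ and every term is trivially $O(h^2)$ — is sound and does not appear in the paper; it confines the delicate $\zeta\ge Z$ cancellation to the boundary/negative-curvature branch, where the step coincides with FOA-Min's, and your handling of the surrogate direction there (writing $\langle\nabla_x f_k, -\zeta h\, g_k/\|g_k\|\rangle\le -\zeta h\|\nabla_x f_k\|+2\zeta L_2 h^2$) matches the paper's Case 2 computation. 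Your bookkeeping for FOA-Min also differs mildly: you expand time and space separately and apply Assumption~\ref{assu:prediction-correction}(iii) at $x_{k+1\mid k}$, paying an extra $ZL_1\zeta h^2$ to relate $\|\nabla_x f(x_{k+1\mid k};t_k)\|$ back to $\|\nabla_x f_k\|$, whereas the paper uses a single joint second-order expansion around $(x_k;t_k)$ and applies (iii) at $x_k$; both are valid since the assumption is uniform in $x$. What each approach buys: the paper's reduction is modular — any prediction that does at least as well as FOA-Min on the quadratic model automatically inherits the $O(h^2)$ bound — while yours is more elementary and self-contained, needing only the closed-form steplength and the Hessian norm bound rather than the Cauchy-point optimality property cited from the trust-region literature. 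One small presentational point: like the paper, you should state explicitly at the outset of part (b) that one may take $\mathscr{G}\equiv 0$ without loss of generality (you note this only in part (a)), since Assumption~\ref{assu:prediction-correction} is phrased for $\bar{f}=f-\mathscr{G}$.
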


\begin{proof} $\ $
    \subsubsection*{(a) $\ $}
    We prove that Algorithm~\ref{algo:foa_min_cp} achieves $f(x_{k+1\mid k};t_{k+1}) - f(x_k;t_k) = O(h)$ regardless of choice of $g_k$.
    By the definition of the prediction in Lines \ref{line:prediction_start}-\ref{line:prediction_end} in Algoirthm~\ref{algo:foa_min_cp}, the prediction $x_{k+1 \mid k}$ can always be denoted as $x_{k+1 \mid k} = x_k - C_1\zeta h g_k / \norm*{g_k}$, where $C_1$ satisfies $0 \leq C_1 \leq 1$, thus, the moving distance $\| x_{k+1 \mid k} - x_k \|$ can be bounded as
    \begin{align}\label{eq:foa_min_cp_moving_distance_bound}
        \| x_{k+1\mid k} - x_k \| \leq \zeta h.
    \end{align}
    Hence, we can obtain 
    \begin{align*}
        f(x_{k+1 \mid k};t_{k+1}) - f_k &= [f(x_{k+1 \mid k};t_{k}) - f_k] + [f(x_{k+1 \mid k};t_{k+1}) - f(x_{k+1 \mid k};t_k)] \\
        &\leq \agbra*{\nabla_x f_k, x_{k+1 \mid k} - x_k} + \frac{L_1}{2}\norm*{x_{k+1 \mid k} - x_k}^2 + G_2h \\
        &= -\frac{C_1\zeta h}{\| g_k \|} \agbra*{\nabla_x f_k, g_k} + \frac{L_1}{2}\norm*{x_{k+1 \mid k} - x_k}^2 + G_2h \\
        &\leq C_1\zeta h \agbra*{g_k - \nabla_x f_k, \frac{g_k}{\| g_k \|}} -C_1\zeta h \| g_k \| + \frac{L_1 \zeta^2}{2}h^2 + G_2h \\
        &\leq C_1\zeta h\| g_k - \nabla_x f_k \| + \frac{L_1 \zeta^2}{2}h^2 + G_2h \\
        &\leq \paren*{C_1L_2\zeta + \frac{L_1 \zeta^2}{2}} h^2 + G_2h = O(h), 
    \end{align*}
    where the first inequality follows from $G_2$-Lipschitzness of $f(x;t)$ in terms of $t$ and $L_1$-smoothness in terms of $x$, and the final inequality holds due to the definition of $g_k$ and Assumption~\ref{assu:prev_sc}(ii).
    
    \textbf{(b)} $\ $
    We first prove that $f(x_{k+1 \mid k};t_{k+1}) - f_k = O(h^2)$ holds for FOA-Min.
    
    
    \subsubsection*{Case 1} ($g_k = \nabla_x f_k$)
    
    From Taylor's theorem, we have
    \begin{align}
        f(x;t_{k+1})
        &= f_k + \agbra*{\nabla_x f_k, x-x_k} + h\nabla_t f_k + \frac{1}{2}(x-x_k)^\top \nabla_{xx}f(y_k;s_k)(x - x_k) \nonumber \\
        &+ h\agbra*{\nabla_{tx}f(y_k;s_k), x - x_k} + \frac{h^2}{2}\nabla_{tt}f(y_k;s_k), \nonumber 
    \end{align}
    where $s_k\in[t_k, t_{k+1}],\ y_k\in[\min(x, x_k), \max(x, x_k)]$ (the operators $\min(\cdot)$ and $\max(\cdot)$ are element-wise). When the gradient norm is so small that the prediction is not implemented (see Line~\ref{line:if_gk_is_small} in Algorithm~\ref{algo:foa_min_cp}), we have
    \begin{align*}
        \agbra*{\nabla_x f_k, x_{k+1 \mid k}-x_k} + h\nabla_t f_k = h\nabla_t f_k \leq Zh \| \nabla_x f_k \| \leq \zeta h\delta.
    \end{align*}
    The following inequality holds when the prediction is implemented:
    \begin{align*}
    \agbra*{\nabla_x f_k, x_{k+1 \mid k}-x_k} + h\nabla_t f_k
    &= h(\nabla_t f_k - \zeta\| \nabla_x f_k \|) \\
    &\leq h(\nabla_t f_k - Z\| \nabla_x f_k \|) \leq 0.
    \end{align*}
    Therefore, together with boundedness of second derivatives and \eqref{eq:foa_min_cp_moving_distance_bound}, we can obtain
    \begin{align}
        &f(x_{k+1\mid k};t_{k+1}) - f_k \nonumber \\
        &\leq \zeta h\delta + \frac{1}{2}(x_{k+1\mid k}-x_k)^\top \nabla_{xx}f(y_k;s_k)(x_{k+1\mid k} - x_k)+ h \agbra*{\nabla_{tx}f(y_k;s_k), x_{k+1\mid k} - x_k} + \frac{h^2}{2}\nabla_{tt}f(y_k;s_k) \nonumber \\
        &\leq \zeta h\delta + \frac{L_1}{2}\norm*{x_{k+1\mid k}-x_k}^2 + hL_2 \| x_{k+1\mid k} - x_k \| + \frac{h^2L_3}{2} \nonumber \\
        &\leq h^2 \paren*{ \frac{\zeta^2 L_1}{2} + \zeta L_2 + \frac{L_3}{2} + \frac{\zeta\delta}{h} } \label{eq:pred_corr_error} \\
        &= O\paren*{h^2}, \nonumber
    \end{align} where the last equality follows from $\delta \leq h$. 
    
\subsubsection*{Case 2} ($g_k = 2\nabla_x f_k - \nabla_x f(x_k;t_{k-1})$)

    Define $\tilde{\nabla}_{tx} f_k = \frac{\nabla_x f_k - \nabla_x f(x_k;t_{k-1})}{h}$, then $g_k$ can be denoted by $g_k = \nabla_x f_k + h\tilde{\nabla}_{tx} f_k$.
    We can see that Assumption~\ref{assu:prev_sc}(ii) yields
    \begin{align*}
        \| \tilde{\nabla}_{tx} f_k \| &\leq \frac{\| \nabla_x f_k - \nabla_x f(x_k;t_{k-1}) \|}{h} \leq L_2, \\
        \|\tilde{\nabla}_{tx} f_k - \nabla_{tx} f_k \|
        &\leq \| \tilde{\nabla}_{tx} f_k \| + \| \nabla_{tx} f_k \| \leq 2L_2.
    \end{align*}
    Furthermore, if Assumption~\ref{assu:prediction-correction}(iii) also holds and the parameter $\zeta$ satisfies $\zeta \geq Z$, we have 
    \begin{align*}
    &\agbra*{\nabla_x f_k, x_{k+1 \mid k}-x_k} + h\nabla_t f_k\\
    &=\agbra*{\nabla_x f_k, -\zeta h\frac{\nabla_x f_k + h \tilde{\nabla}_{tx} f_k}{\| \nabla_x f_k + h \tilde{\nabla}_{tx} f_k \|}} + h\nabla_t f_k \\
    &=h\nabla_t f_k + \agbra*{\nabla_x f_k + h \tilde{\nabla}_{tx} f_k, -\zeta h\frac{\nabla_x f_k + h \tilde{\nabla}_{tx} f_k}{\| \nabla_x f_k + h \tilde{\nabla}_{tx} f_k \|}} - \agbra*{h \tilde{\nabla}_{tx} f_k, -\zeta h\frac{\nabla_x f_k + h \tilde{\nabla}_{tx} f_k}{\| \nabla_x f_k + h \tilde{\nabla}_{tx} f_k \|}} \\
    &= h(\nabla_t f_k - \zeta\| \nabla_x f_k + h\tilde{\nabla}_{tx} f_k \|)
    + h^2\zeta \agbra*{\tilde{\nabla}_{tx} f_k, \frac{\nabla_x f_k + h \tilde{\nabla}_{tx} f_k}{\| \nabla_x f_k + h \tilde{\nabla}_{tx} f_k \|}} \\
    &\leq h(\nabla_t f_k - \zeta \| \nabla_x f_k \| + \zeta h \ \|\tilde{\nabla}_{tx} f_k \|)
    + h^2\zeta \| \tilde{\nabla}_{tx} f_k \| \\
    &\leq h(\nabla_t f_k - Z\| \nabla_x f_k \|) + 2h^2\zeta L_2\\
    &\leq 2h^2\zeta L_2.
    \end{align*}
    Therefore, together with the boundedness of second derivatives and \eqref{eq:foa_min_cp_moving_distance_bound}, we can obtain
    \begin{align*}
        &f(x_{k+1\mid k};t_{k+1}) - f_k \\
        &= 2h^2 \zeta L_2 \\
        &+ \frac{1}{2}(x_{k+1\mid k}-x_k)^\top \nabla_{xx}f(y_k;s_k)(x_{k+1\mid k} - x_k)+ h \agbra*{\nabla_{tx}f(y_k;s_k), x_{k+1\mid k} - x_k} + \frac{h^2}{2}\nabla_{tt}f(y_k;s_k) \\
        &= O\paren*{h^2}.
    \end{align*}
    Next, we prove that $\{ x_k \}$ and $\{ x_{k+1 \mid k} \}$ generated by CP also satisfy the statement regardless of the choice of $g_k$. Let $\hat{x}_{k+1 \mid k} = x_k - \zeta h \frac{g_k}{\| g_k \|}$ be a point generated by following the update rule of FOA-Min.
    Since $x_{k+1 \mid k}$ is the Cauchy point of $\hat{f}(x) := f_k + \agbra*{g_k, x-x_k}
    + h\nabla_t f_k
    + \frac{1}{2}(x-x_k)^\top \nabla_{xx}f_k(x - x_k)
    $, we obtain $\hat{f}(x_{k+1 \mid k}) \leq \hat{f}(\hat{x}_{k+1 \mid k})$ \citep{nocedal2006numerical}. Together with the boundedness of $\nabla_{xx} f(x;t)$ and $\nabla_{tx} f(x;t)$, \eqref{eq:foa_min_cp_moving_distance_bound} and $\| \hat{x}_{k+1 \mid k} - x_k \| \leq \zeta h$, we can obtain
    \begin{align*}
        &f(x_{k+1 \mid k};t_{k+1}) - f_k \\
        &=[f(x_{k+1 \mid k};t_{k+1}) - \hat{f}(x_{k+1 \mid k})] + [\hat{f}(x_{k+1 \mid k}) - \hat{f}(\hat{x}_{k+1 \mid k})] \\
        &- [f(\hat{x}_{k+1 \mid k};t_{k+1}) - \hat{f}(\hat{x}_{k+1 \mid k})] + [f(\hat{x}_{k+1 \mid k};t_{k+1}) - f_k ]\\
        &= [\hat{f}(x_{k+1 \mid k}) - \hat{f}(\hat{x}_{k+1 \mid k})] + [f(\hat{x}_{k+1 \mid k};t_{k+1}) - f_k ]\\
        &+ \left[\langle \nabla_x f_k + h \nabla_{tx} f(y_k;s_k) - g_k, x_{k + 1 \mid k} - x_k \rangle\right.\\
        &+ \left. \frac{1}{2} (x_{k+1 \mid k} - x_k)^\top \paren*{\nabla_{xx} f(y_k;s_k) - \nabla_{xx} f_k }(x_{k+1 \mid k} - x_k) + \frac{h^2}{2} \nabla_{tt} f(y_k;s_k) \right]\\
        &- \left[\langle \nabla_x f_k + h \nabla_{tx} f(\hat{y}_k;\hat{s}_k) - g_k, \hat{x}_{k + 1 \mid k} - x_k \rangle \right. \\
        &+ \left. \frac{1}{2} (\hat{x}_{k + 1 \mid k} - x_k)^\top \paren*{\nabla_{xx} f(\hat{y}_k;\hat{s}_k) - \nabla_{xx} f_k }(\hat{x}_{k + 1 \mid k} - x_k) + \frac{h^2}{2} \nabla_{tt} f(\hat{y}_k;\hat{s}_k) \right] \\
        &\leq f(\hat{x}_{k+1 \mid k};t_{k+1}) - f_k + O(h^2) = O(h^2),
    \end{align*}
    where the second equality holds due to Taylor's theorem, and  $s_k\in[t_k, t_{k+1}],\ y_k\in[\min(x, x_k), \max(x, x_k)],\ \hat{s}_k\in[t_k, t_{k+1}],\ \hat{y}_k\in[\min(x, \hat{x}_k), \max(x, \hat{x}_k)]$ (the operators $\min(\cdot)$ and $\max(\cdot)$ are element-wise).
\end{proof}

We can analyze the convergence results of the proposed algorithms for non-convex and PL functions by using this lemma and Lemma~\ref{lemm:optimal_value_const_assu4}.
We can not only provide an analysis of the average convergence error but also guarantee the quality of every solution after convergence, as in Section~\ref{chap:analysis_of_gd}. When Assumption~\ref{assu:prediction-correction} holds, the convergence error is squared in comparison to TVGD because of the improvement in the value of $f(x_{k+1 \mid k};t_{k+1}) - f_k$, and it matches that of TVGD otherwise. In the following analyses, we will only present the former results. We also set the number of correction steps $C$ to 1 for simplicity.


 For non-convex objective functions, we will provide convergence analyses of only FOA-Min with $g_k = \nabla_x f_k$, although similar results can be obtained for different choices of $g_k$ and algorithms. Here, let us define $\bar{G}_2:= \frac{\zeta^2 L_1}{2} + \zeta L_2 + \frac{L_3}{2} + \frac{\zeta\delta}{h}$.

\begin{theo}\label{theo:smooth_pred_corr_ANE}
    Consider the sequence $\brc{x_{k \mid k-1}}$ generated by FOA-Min with $g_k=\nabla_x f_k$, and
    suppose that Assumptions~\ref{assu:prev_sc}(i)-(ii) and \ref{assu:prediction-correction} hold. Set the stepsize as $\beta = 1/L_1$ and the parameters $\zeta, \delta$ so that $\zeta \geq Z$ and $\delta \leq h$ are satisfied. Then, for all $k_0\in\mathbb{Z}_{\geq 0}$,
    the average of the gradient norm for $T_{k_0}:= \frac{f_{k_0} - f_{k_0}^\ast}{h^2}$ iterations satisfies
$
\frac{1}{T_{k_0}}\sum_{k=k_0}^{k_0 + T_{k_0}-1} \norm*{ \nabla_x f(x_{k \mid k-1};t_k)}
\leq
\sqrt{2L_1(1+\bar{G}_2)}h
$.
\end{theo}

\begin{proof}
For simplicity, we prove the statement when $k_0=0$.
From the update rule of $\{\hat{x}_k^c\}_{c}$, we have
\begin{align}
    \paren*{\beta - \frac{L_1\beta^2}{2}}\norm*{ \nabla_x f(x_{k+1\mid k};t_{k+1})}^2
    &\leq 
    f(x_{k+1\mid k};t_{k+1}) - f(x_{k+1};t_{k+1}) \label{eq:pred_corr_Inequality}
    \\
    &\leq 
    f(x_{k};t_{k}) - f(x_{k+1};t_{k+1}) + \bar{G}_2h^2, \label{eq:pred_corr_Inequality_G2h}
\end{align}
where the last inequality holds due to \eqref{eq:pred_corr_error} in Lemma~\ref{lemm:func_value_error_foa_min_cp}.
Now, set the stepsize as $\beta=1/L_1$, and sum up the above inequality for all iterations $0\leq k \leq T-1$. Then, we can obtain
\begin{align*}
    \frac{1}{2L_1} \sum_{k=1}^{T} \norm*{ \nabla_x f(x_{k \mid k - 1};t_k)}^2
    &\leq
    \paren*{f(x_0;t_0) - f(x_{T};t_{T})} + T\bar{G}_2h^2 \\
    &\leq
    f(x_0;t_0) - f^\ast_T + T\bar{G}_2h^2 \\
    &=
    f(x_0;t_0) - f^\ast_0 + T\bar{G}_2h^2,
\end{align*}
where the last equality holds due to Lemma~\ref{lemm:optimal_value_const_assu4}.
Since $f(x_{0 \mid -1};t_0) - f(x_{0};t_{0}) \geq \frac{1}{2L_1} \| \nabla_x f(x_{0 \mid -1};t_0) \|^2$ follows from the descent lemma, we have
\begin{align*}
    \frac{1}{2L_1} \sum_{k=0}^{T-1} \norm*{ \nabla_x f(x_{k \mid k - 1};t_k)}^2
    &\leq
    \paren*{f(x_{0 \mid -1};t_0) - f(x_{0};t_{0})} + \paren*{f(x_0;t_0) - f_{0}^\ast} + T\bar{G}_2h^2 \\
    &=
    f(x_{0 \mid -1};t_0) - f^\ast_0 + T\bar{G}_2h^2.
\end{align*}
From
the Cauchy-Schwarz inequality, when the number of iterations is $T = \frac{f(x_0;t_0) - f^\ast_0}{h^2}$, we can obtain
\begin{align*}
\frac{1}{T}\sum_{k=0}^{T-1} \norm*{ \nabla_x f(x_k;t_k)} &\leq \sqrt{ \frac{1}{T} \sum_{k=0}^{T-1} \norm*{ \nabla_x f(x_k;t_k)}^2 }\\
&\leq \sqrt{\frac{2L_1(f(x_0;t_0) - f_0^\ast)}{T} + 2L_1\bar{G}_2h^2} \\
&= \sqrt{2L_1(1+\bar{G}_2)}h.
\end{align*}
\end{proof}

\begin{theo}\label{theo:smooth_pred_corr_last}
    Suppose the same settings as in Theorem~\ref{theo:smooth_pred_corr_ANE}.
     Then, once $\sqrt{2L_1(1+\bar{G}_2)}h$-stationary is found by FOA-Min with $g_k = \nabla_x f_k$ at iteration $\bar{T}$,
     every subsequent iterate $x_{k \mid k-1}\ ( k\geq \bar{T})$
     satisfies at least one of the following two conditions:
     \begin{itemize}
         \item[(a)] The iterate $x_{k \mid k-1}$ is a $\sqrt{2L_1(1+\bar{G}_2)}h$-stationary point of $f(x;t_k)$.
         \item[(b)] There exists an integer $l<k$ such that $x_{l \mid l-1}$ is a $\sqrt{2L_1(1+\bar{G}_2)}h$-stationary point of $f(x;t_l)$, and 
         $
             (f(x_{k \mid k-1};t_k) - f_k^\ast) < (f(x_{l \mid l-1};t_{l}) - f_l^\ast) +\bar{G}_2h^2 - \frac{1}{2L_1} \| \nabla_x f(x_{l \mid l-1};t_l) \|^2
         $ holds.
     \end{itemize}
 \end{theo}

 \begin{proof}
   For some $k \geq \bar{T}$, suppose that (a) does not hold, that is, $\| \nabla_x f(x_{k \mid k-1};t_k) \| > \sqrt{2L_1(1+\bar{G}_2)}h$ holds. Let $l<k$ be an integer satisfying
    \begin{align*}
        \| \nabla_x f(x_{l \mid l-1};t_l) \| &\leq \sqrt{2L_1(1+\bar{G}_2)}h, \\ 
        l < \forall j < k,\ \| \nabla_x f(x_{j \mid j-1};t_j) \| &> \sqrt{2L_1(1+\bar{G}_2)}h.
    \end{align*}
Inequalities \eqref{eq:pred_corr_Inequality} and \eqref{eq:pred_corr_Inequality_G2h} with $\beta = 1/L_1$ yield that for any $k \geq 0$, we have
\begin{align*}
   \frac{1}{2L_1}\norm*{ \nabla_x f(x_{k+1 \mid k};t_{k+1})}^2 &\leq f(x_{k+1 \mid k};t_{k+1}) - f(x_{k+1};t_{k+1}), \\
   \frac{1}{2L_1}\norm*{ \nabla_x f(x_{k+1 \mid k};t_{k+1})}^2 &\leq (f(x_{k};t_k) - f(x_{k+1};t_{k+1})) + \bar{G}_2h^2.
\end{align*}
    The above inequalities imply 
    \begin{align*}
        f(x_{l};t_{l}) - f(x_{l \mid l-1};t_{l})
        &\leq - \frac{1}{2L_1} \norm*{ \nabla_x f(x_{l \mid l-1};t_{l})}^2,\\
        l < \forall j < k,\ f(x_{j};t_{j}) - f(x_{j-1};t_{j-1})
        &\leq \frac{1}{2L_1} \paren*{2L_1\bar{G}_2h^2 - \norm*{ \nabla_x f(x_{j \mid j-1};t_{j})}^2} \\
        &< \frac{1}{2L_1} \paren*{2L_1\bar{G}_2h^2 - 2L_1(1+\bar{G}_2)h^2} = -h^2 < 0.
    \end{align*}
By summing up the above inequalities
we can obtain
\begin{align*}
    f(x_{k-1};t_{k-1}) - f(x_{l \mid l-1};t_l) < - \frac{1}{2L_1} \norm*{ \nabla_x f(x_{l \mid l-1};t_{l})}.
\end{align*}
Therefore, (b) follows by using Inequality \eqref{eq:pred_corr_error} and Lemma~\ref{lemm:optimal_value_const_assu4}:
\begin{align*}
    (f(x_{k \mid k-1};t_{k}) - f_{k}^\ast) - (f(x_{l \mid l-1};t_l) - f_{l}^\ast) < \bar{G}_2h^2 - \frac{1}{2L_1} \norm*{ \nabla_x f(x_{l \mid l-1};t_{l})}.
\end{align*}
\end{proof}

\begin{theo} \label{theo:PL_pred_corr_AFE}
Consider the sequence $\brc{x_{k \mid k-1}}$ generated by Algorithm~\ref{algo:foa_min_cp}.
Suppose that the objective function $f$ is $\mu$-PL function in terms of $x$ and that Assumptions~\ref{assu:prev_sc}(i)-(ii) and \ref{assu:prediction-correction} hold.
Set the stepsize as $\beta = 1/L_1$, and let $\rho:=1-\frac{\mu}{L_1}\in[0, 1)$. If the parameters $\zeta, \delta$ satisfy $\zeta \geq Z$ and $\delta \leq h$, we have 
$\forall k \in \mathbb{N},\ f(x_{k \mid k-1};t_{k}) - f^\ast_{k} \leq \rho^{k} (f(x_{0 \mid -1};t_0) - f^\ast_0) + O(h^2),\ \lim_{k\to\infty} (f(x_{k \mid k-1};t_{k}) - f^\ast_{k}) = O(h^2).$
\end{theo}

\begin{proof}
    We can prove similarly to Theorem~\ref{theo:GD_PL_linear_convergence} by using Lemmas~\ref{lemm:optimal_value_const_assu4} and \ref{lemm:func_value_error_foa_min_cp}:
    \begin{align*}
    f(x_{k+1 \mid k};t_{k+1}) - f_{k+1}^\ast
    &\leq 
    f(x_{k};t_{k}) - f_{k}^\ast + O(h^2)\\
    &\leq 
    \rho (f(x_{k\mid k-1};t_{k}) - f_{k}^\ast) + O(h^2) \\
    &\leq 
    \rho^{k+1} (f(x_{0 \mid -1};t_{0}) - f_0^\ast) + O(h^2),\\
    \end{align*}
    which yields $\lim_{k\to\infty} (f(x_{k \mid k-1};t_{k}) - f^\ast_{k}) = O(h^2)$.
\end{proof}


\section{Numerical Experiments}

We conducted three experiments, i.e., optimization of a non-convex toy function, linear regression using synthetic datasets, and matrix factorization using real datasets.
All the experiments are implemented in Python 3.9.7 on a MacBook Pro whose chip is M1 Pro and memory is 16GB.

\subsection{Non-convex Toy Problem} \label{sec:nonconvex_toy}
First, we will show that a prediction method designed for SC functions may suffer from instability for the non-convex objective functions. Subsequently, we will demonstrate it by using a toy problem.

\subsubsection{Instability of Prediction Method Designed for SC Functions} \label{subsec:instability}

A Taylor expansion-based prediction \eqref{eq:existing_inexact_prediction} for SC functions is  
\begin{align*} 
   \hat{x}_k^{p+1} = \hat{x}_k^{p} -\alpha \paren{\nabla_{xx} f_k(\hat{x}_k^{p}-x_k) + h\nabla_{tx} f_k + \gamma\nabla_x f_k} \tag{\ref{eq:existing_inexact_prediction}},
\end{align*}
where $\gamma \in [0, 1]$. It aims to optimize the following quadratic function:
\begin{align} \label{eq:opt_problem_pred_for_sc}
    \min_{x \in \mathbb{R}^d}\ \hat{f}^{2, \gamma}(x;t_{k+1}) := f_k &+ \gamma \agbra*{\nabla_x f_k, x-x_k} + h\nabla_t f_k \nonumber \\
    &+ \frac{1}{2}(x-x_k)^\top \nabla_{xx} f_k(x - x_k) + h\agbra*{\nabla_{tx}f_k, x - x_k} + \frac{h^2}{2}\nabla_{tt}f_k.
\end{align}
An optimal solution to this problem will yield an accurate prediction for the SC objective function. However, if the objective function is non-convex, $\nabla_{xx} f_k$ may be negative definite, which implies the function value $\hat{f}^{2, \gamma}(\cdot;t_{k+1})$ decreases unboundedly when iterates move in the direction of a negative eigenvector. 
Therefore, the prediction method may attempt to move in the direction endlessly and the solution may be unstable or unbounded.
We will be likely to encounter such a problem
especially when $\gamma$ is non-zero since the update length $\hat{x}_k^{p+1} - \hat{x}_k^{p}$ may amount to $\Omega(1)$.

\subsubsection{Problem Settings and Results}

Let us consider the following non-convex objective function with a parallel shift:
\begin{align*}
   f(x;t) := \frac{(x-10t)^2}{20} + \sin(x - 10t).
\end{align*}
We plot the graph of the objective function when $t=0$ in Figure~\ref{fig:poly_sin_graph}. We compared the performances of four algorithms: TVGD (Algorithm~\ref{algo:GD}), U-FOPC 
, FOA-Min (with $g_k = \nabla_x f_k$), and CP (with $g_k = 2\nabla_x f_k - \nabla_x f(x_k;t_{k-1})$). The initial point was chosen as $x_{0 \mid -1} = 8$, where the second derivative in terms of $x$ is negative, and the stepsizes were set to $\alpha = \beta = 1.0 \simeq 1/1.1 = 1/L_1$. We set the parameter $\gamma$ of U-FOPC to $0$ or $1$ and selected $\zeta = 10$ for FOA-Min and CP since $|\nabla_t f(x;t)| = 10\| \nabla_x f(x;t) \|$ holds. The complete parameter settings are described in Table~\ref{table:poly_sin_param_setting}.

\begin{figure}[h] 
  \includegraphics[bb=0.000000 0.000000 455.040910 329.040658,width=0.5\linewidth]{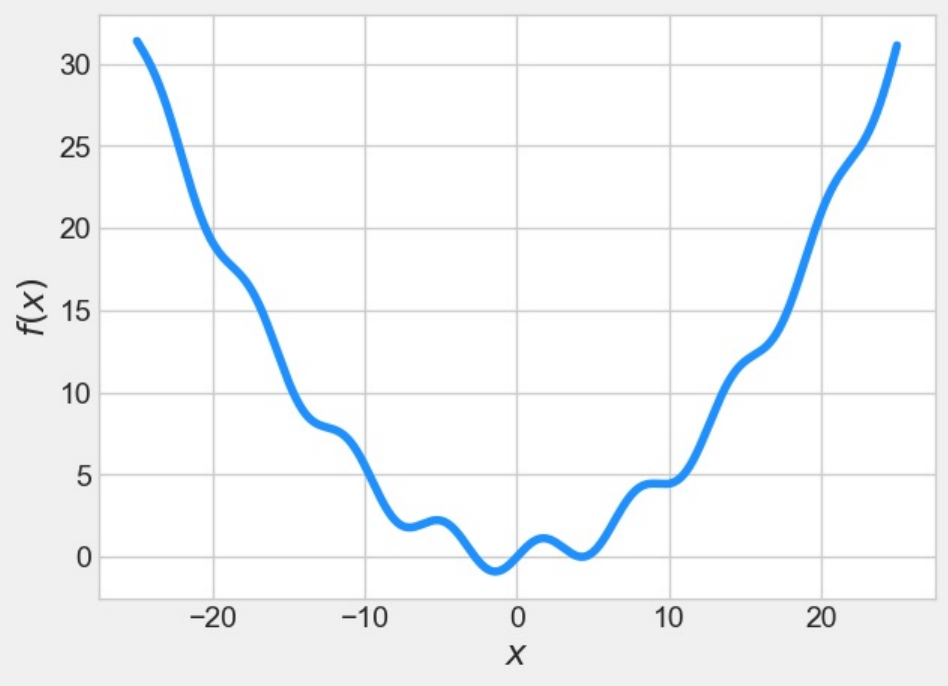}
  \centering
  \caption{Graph of $f(x) = \frac{x^2}{20} + \sin(x)$.}
  \label{fig:poly_sin_graph}
\end{figure}

\begin{table}[h]
\centering
  \caption{Parameter settings in the non-convex toy problem.}
  \begin{tabular}{cccc|ccc|cc}
    \toprule
    \multicolumn{4}{c}{Common} & \multicolumn{3}{|c|}{U-FOPC} &  \multicolumn{2}{c}{Proposed}
    \\ \hline
    $(T, h)$ & $x_{0 \mid -1}$ & $C$ & $\beta$ & $P$  & $ \alpha$ & $\gamma$ & $\zeta$ & $\delta$
    \\ \hline
    $(100, 0.1)$ & $8.0$ & $1$ & $1.0$ & $10$ & $1.0$ & $0$ or $1.0$ & $10$ & $1e^{-10}$ \\ 
    \bottomrule
    \end{tabular}%
\label{table:poly_sin_param_setting}%
\end{table}%

Figure~
\ref{fig:poly_sin_all} shows plots of the iterates $x_{k \mid k-1} - 10 t_k$ and gradient norm $\| \nabla_x f(x_{k \mid k-1};t_{k}) \|$ generated by the algorithms. We subtracted $10 t_k$ from $x_{k \mid k-1}$ to cancel the effect of a parallel shift. U-FOPC with $\gamma = 1$ diverges within a few iterations due to an unbounded prediction, which was suggested in the above observation. U-FOPC with $\gamma=0$ also goes to a point far away from the initial point, leading to a worse objective value than other algorithms. FOA-Min and CP can successfully track the stationary point nearest the initial point in the descent direction, and in particular, CP achieves the lowest gradient norm. This improvement implies that accurate optimization of the Taylor series approximation can yield better tracking accuracy. TVGD (labeled as GD in the figure, and thereafter) accidentally arrives at a point near an optimal solution, and the gradient norm is stable around 1.

%

\begin{figure}[h]
    \centering

\begin{tabular}{cc}
    \begin{minipage}[t]{0.45\linewidth}
    \centering
        \includegraphics[bb=0.000000 0.000000 426.240852 296.640593,width=2.5in]{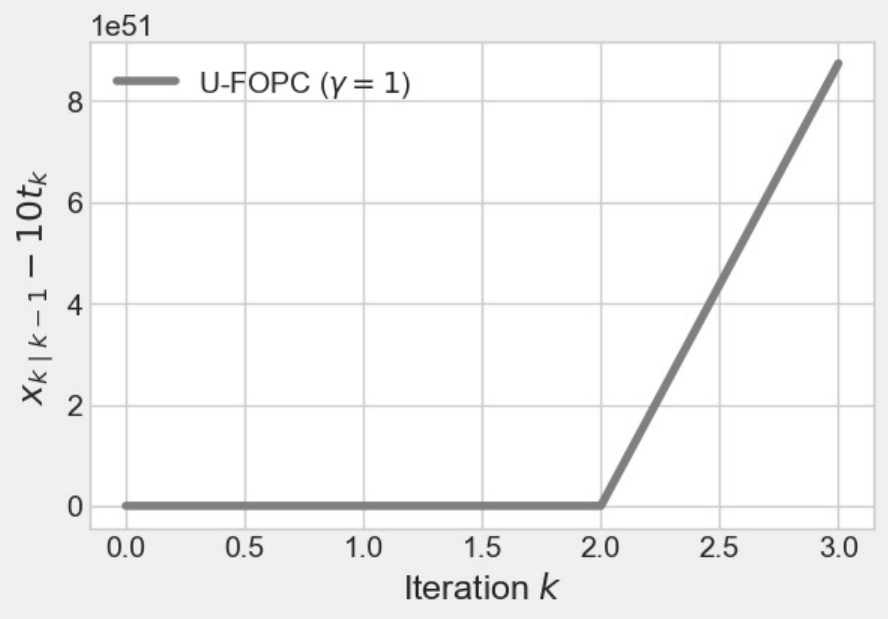}
    \end{minipage}

    \begin{minipage}[t]{0.45\linewidth}
    \centering
        \includegraphics[bb=0.000000 0.000000 426.960854 296.640593,width=2.5in]{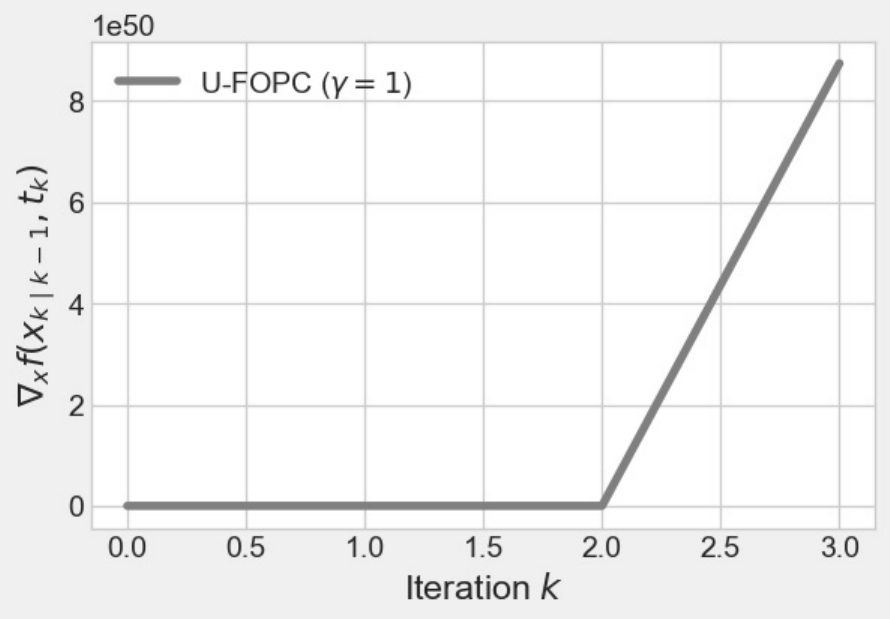}
    \end{minipage}%
\end{tabular}
\centering

\begin{tabular}{cc}
    \begin{minipage}[t]{0.45\linewidth}
    \centering
        \includegraphics[bb=0.000000 0.000000 433.440867 283.680567,width=2.5in]{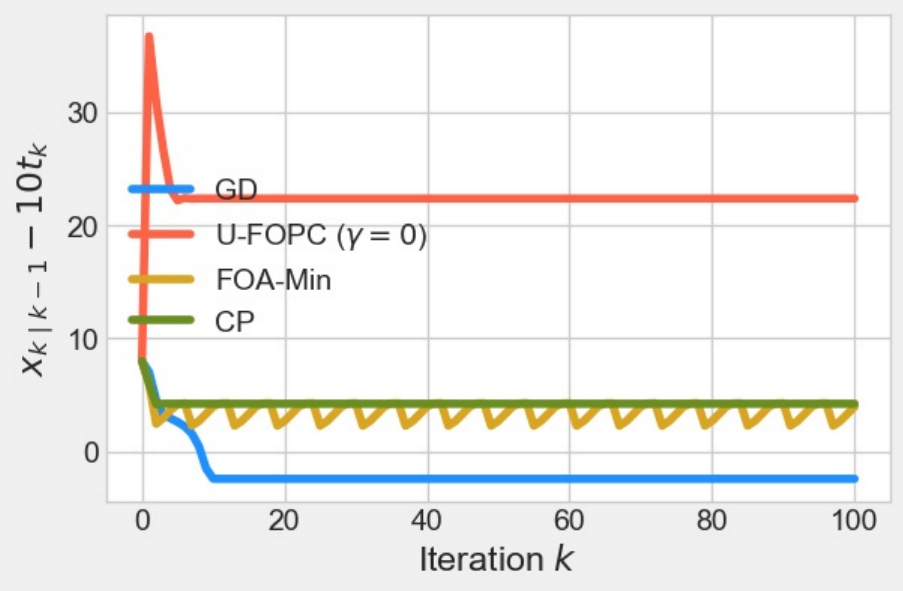}
     \end{minipage}

    \begin{minipage}[t]{0.45\linewidth}
    \centering
        \includegraphics[bb=0.000000 0.000000 426.960854 283.680567,width=2.5in]{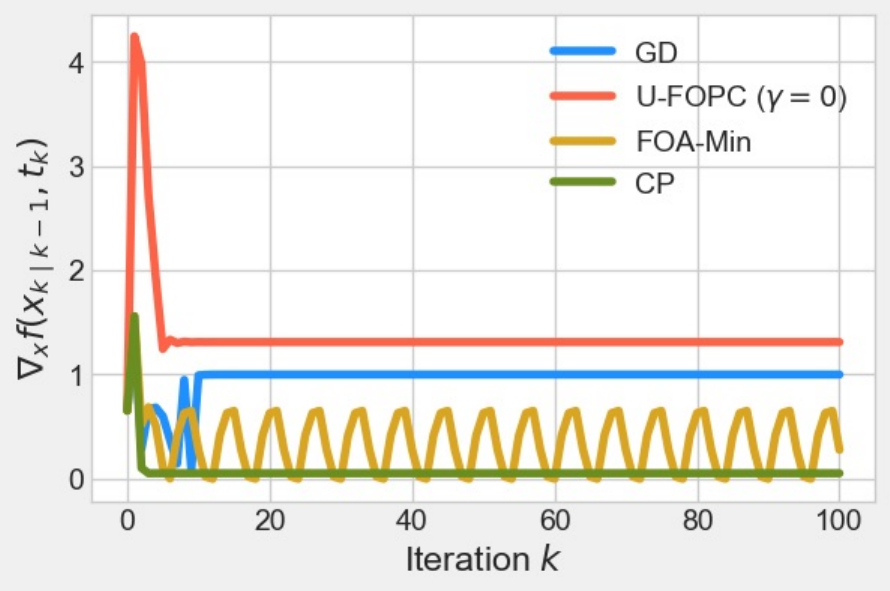}
    \end{minipage}%
\end{tabular}
\centering
\caption[Caption for LOF]{
    Plots of the iterate $x_{k \mid k-1} - 10 t_k$ \footnotemark and gradient norm $\| \nabla_x f(x_{k \mid k-1};t_{k}) \|$ generated by the algorithms.
    }
\label{fig:poly_sin_all}
\end{figure}

\footnotetext{We define $x_{k \mid k-1} := x_k$ for TVGD.}

\subsection{Linear Regression} \label{sec:linear_regression_main}
The objective function is defined as
\begin{align*}
  f(x;t) = \frac{1}{2} \norm*{ \langle A, x \rangle - b(t)}^2,
\end{align*} where $A\in \mathbb{R}^{10 \times 10}$ is a diagonal matrix whose $(i, i)$ entry of $A$ is 0.1 when $i \leq 5$ and is 10 otherwise, and $b(t) \in \mathbb{R}^{10}$ is a time-varying vector whose $i$-th entry is set to
$
  b_i(t) = 10 \sin\paren*{\frac{t}{100} + \frac{2\pi i}{10}}.
$
We also implemented a linear regression with a time-varying matrix $A(t)$; this experiment and its results are described in Appendix~\ref{sec:linear_regression_appendix}.

%

We compared the performances of TVGD, U-FOPC,
FOA-Min (with $g_k = \nabla_x f_k$), and CP (with $g_k = 2\nabla_x f_k - \nabla_x f(x_k;t_{k-1}))$.
Table~\ref{table:lr_invariant_param_setting} summarizes the parameter settings.
The parameter $\gamma$ of U-FOPC was fixed to 0 based on the observations in Section~\ref{sec:nonconvex_toy}. We set the stepsizes to $(\alpha = ) \beta = 1/L_1 = 1/10^2 = 0.01$ for all the algorithms.
The parameter $\zeta$ for FOA-Min and CP was determined by following
\begin{align*}
  \frac{| \nabla_t f(x;t) |}{\| \nabla_x f(x;t) \|} \leq (\| A'(t) \| \| x \| + \|b'(t)\|)\| A^{-1}(t) \| \leq (10\sqrt{5} / 100) \times (1 / 0.1) \leq 2.5 =: \zeta.
\end{align*} 
The number of correction steps $C$ for each algorithm was determined so that the computational time per iteration was the same for all the algorithms. The computational time for the prediction and correction steps of each algorithm is summarized in Table~\ref{table:lr_comp_time}.

\begin{table}[H]
\centering
  \caption{Parameter settings in linear regression with invariant curvature.}
  \begin{tabular}{ccc}
    \toprule
    \multicolumn{2}{c}{Common} 
    \\ \hline
    $(T, h)$ & $x_{0 \mid -1}$
    \\ \hline
    $(2e^{3}, 0.1), (2e^{4}, 0.01), (2e^{5}, 1e^{-3})$ & $x_{0 \mid -1}\sim\mathcal{N}(0, \mathrm{I})$ \\ 
    \bottomrule
    \end{tabular}%

\vspace{2.5mm}

\begin{tabular}{cc|ccccc|cccc|cccc}
    \toprule
    \multicolumn{2}{c}{GD} & \multicolumn{5}{|c|}{U-FOPC} &  \multicolumn{4}{c|}{FOA-Min} &  \multicolumn{4}{c}{CP}
    \\ \hline
    $C$ & $\beta$ & $P$ & $C$ & $\alpha$ & $ \beta$ & $\gamma$ & $C$ & $\beta$ & $\zeta$ & $\delta$ & $C$ & $\beta$ & $\zeta$ & $\delta$ 
    \\ \hline
    $4$ & $0.01$ & $10$ & $1$ & $0.01$ & $0.01$ & $0$ & $3$ & $0.01$ & $2.5$ & $1e^{-10}$ & $1$ & $0.01$ & $2.5$ & $1e^{-10}$\\ 
    \bottomrule
    \end{tabular}%
\label{table:lr_invariant_param_setting}%
\end{table}%

\begin{table}[H]
\centering
  \caption{Computational time required to correct and predict.}
  \begin{tabular}{ccccc}
    \toprule
    & Corr. & Pred. (U-FOPC) & Pred. (FOA-Min) & Pred. (CP) \\ \hline
    Time [s] & $7.3e^{-5}$ & $2.1e^{-4}$ & $6.7e^{-5}$ & $1.7e^{-4}$ \\ 
    \bottomrule
    \end{tabular}%
  \label{table:lr_comp_time}
\end{table}

Figure~\ref{fig:lr_invariant_all} illustrates the optimization results when the sampling period is $h=1e^{-3}$. We can see that FOA-Min achieves a significant improvement in the function value and gradient norm in comparison with the existing methods and that CP also outperforms the existing methods in both evaluation metrics. The accuracy of U-FOPC is worse than TVGD;
this would be because the improvement in the solution by the prediction was smaller than that by the correction within the same time.

\begin{figure}[h]
    \centering
\begin{tabular}{cc}
    \begin{minipage}[t]{0.45\linewidth}
    \centering
        \includegraphics[bb=0.000000 0.000000 452.880906 290.160580,width=2.5in]{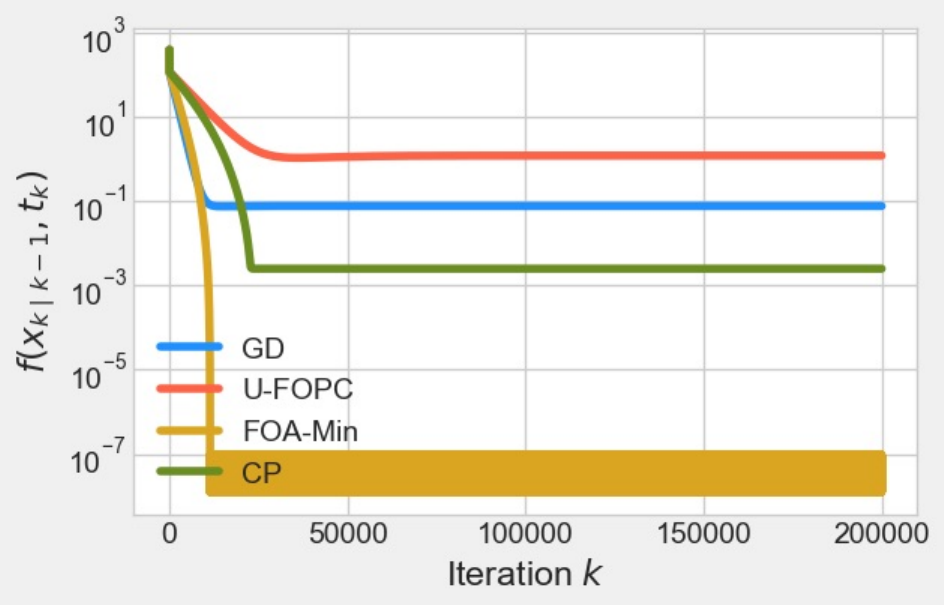}
    \end{minipage}

    \begin{minipage}[t]{0.45\linewidth}
    \centering
        \includegraphics[bb=0.000000 0.000000 452.880906 283.680567,width=2.5in]{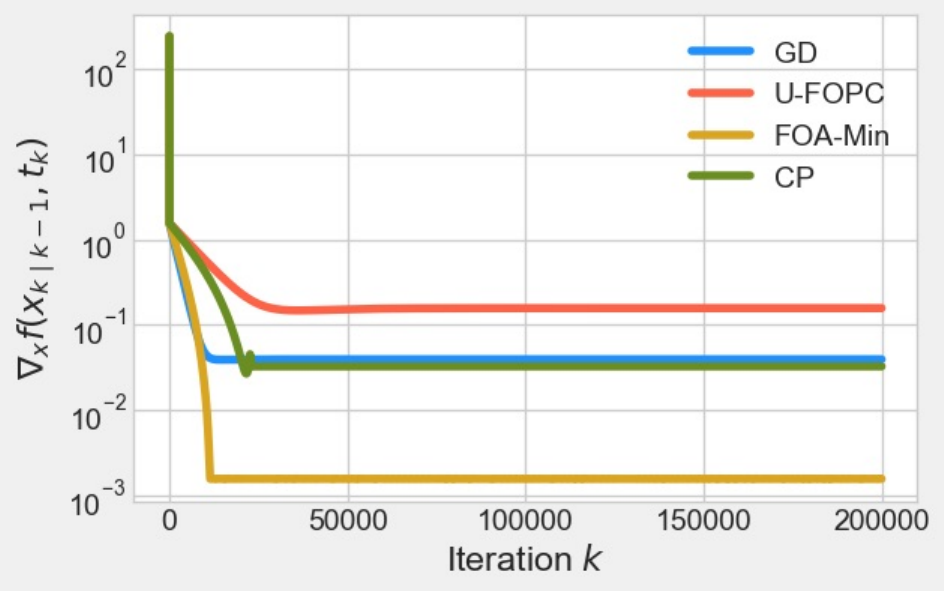}
    \end{minipage}%
\end{tabular}
\centering
    \caption{Log plots of function value and gradient norm when $h = 1e^{-3}$.}
    \label{fig:lr_invariant_all}
\end{figure}

We also ran experiments with different sampling periods to check the change in the accuracy of the solutions.
The top two plots of Figure~\ref{fig:lr_invariant_different_h} indicate that FOA-Min and CP achieve an $O(h)$ gradient norm, while the sampling period dependencies of the existing methods seem to be worse than $O(h)$.
The sampling period dependencies of the optimality gap of the proposed algorithms are also better than those of the existing algorithms (see the bottom plots of Figure~\ref{fig:lr_invariant_different_h}).

\begin{figure}[h]
    \centering

\begin{tabular}{cc}
    \begin{minipage}[t]{0.45\linewidth}
    \centering
        \includegraphics[bb=0.000000 0.000000 439.920880 295.200590,width=2.5in]{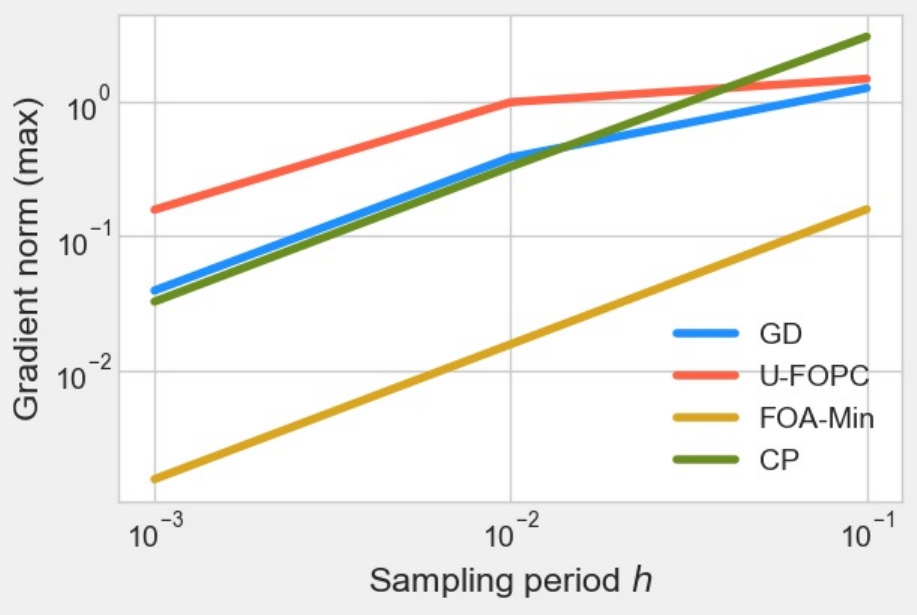}
    \end{minipage}

    \begin{minipage}[t]{0.45\linewidth}
    \centering
        \includegraphics[bb=0.000000 0.000000 439.920880 295.200590,width=2.5in]{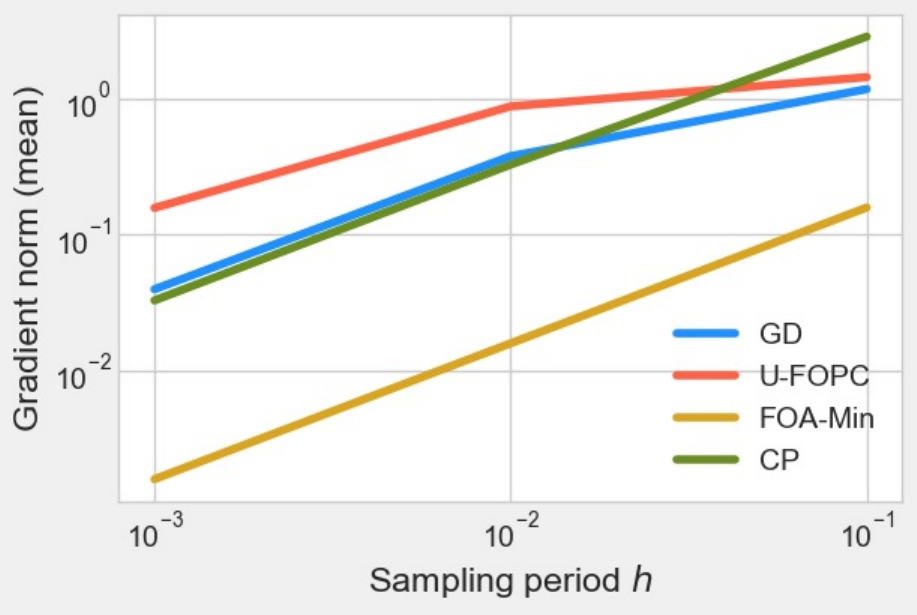}
    \end{minipage}%
\end{tabular}

\begin{tabular}{cc}
    \begin{minipage}[t]{0.45\linewidth}
    \centering
        \includegraphics[bb=0.000000 0.000000 439.920880 295.200590,width=2.5in]{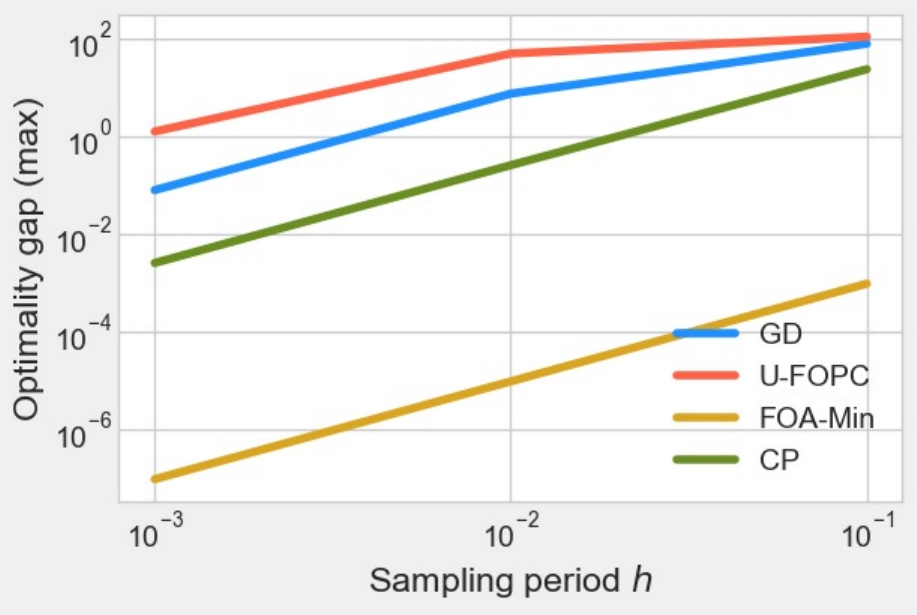}
    \end{minipage}

    \begin{minipage}[t]{0.45\linewidth}
    \centering
        \includegraphics[bb=0.000000 0.000000 439.920880 295.200590,width=2.5in]{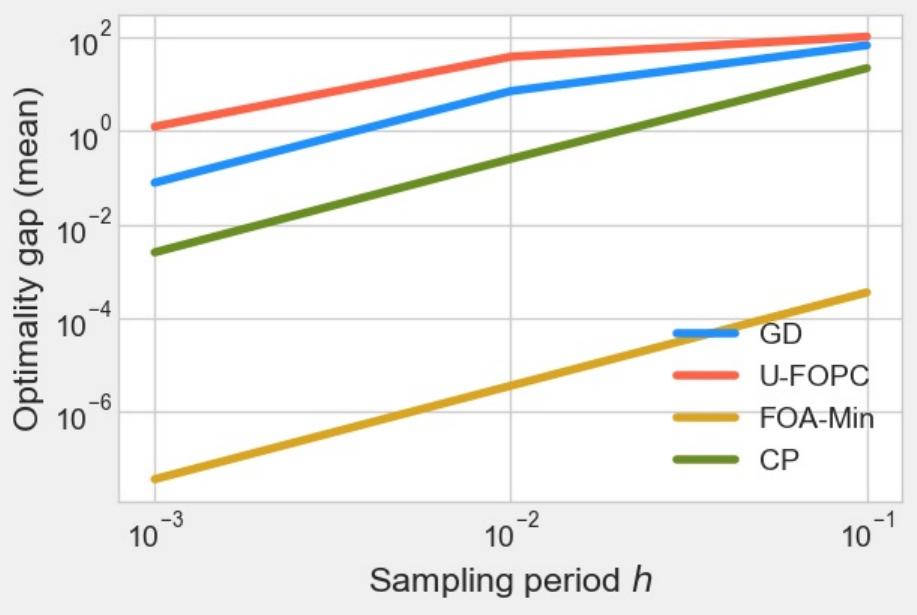}
    \end{minipage}%
\end{tabular}

\centering
    \caption{(Top) Log-log plots of maximum and mean of the gradient norm $ \| \nabla_x f(x_{k \mid k-1};t_k) \|$ versus sampling period. (Bottom) Log-log plots of maximum and mean of the optimality gap $f(x_{k \mid k-1};t_k) - f_k^\ast$ versus sampling period.
    Maximum and mean are computed based on the results of the last half of the iterations.}
    \label{fig:lr_invariant_different_h}
\end{figure}

\subsection{Non-convex Robust Regression} \label{sec:linear_regression}
The objective function is defined as
\begin{align*}
  f(x;t) := \sum_{i=1}^n \ell( (\langle A(t), x \rangle - b(t))_i ).
\end{align*}
Here, $A(t)$ and $b(t)$ represent a time-varying matrix and vector, respectively, and $(\cdot)_i$ represents $i$-th entry.
For the loss function $\ell$, we used the following two robust loss functions:
\begin{align*}
    \ell_1(y) &:= \frac{2y^2}{y^2 + 4}, \\
    \ell_2(y) &:= 1 - \exp{\paren*{-\frac{y^2}{2}}}.
\end{align*}
The functions $\ell_1$ and $\ell_2$ are referred to as the Geman-McClure loss function~\citep{geman1985bayesian} and Welsch loss function~\citep{dennis1978techniques}, respectively. They are non-convex as shown in Figure~\ref{fig:nonconvex_robust}, which implies that the objective function $f(\cdot;t)$ is also non-convex.

\begin{figure}[h]
    \centering
    \begin{tabular}{cc}
        \begin{minipage}[t]{0.45\linewidth}
        \centering
        \includegraphics[bb=0.000000 0.000000 447.840896 166.320333,width=2.5in]{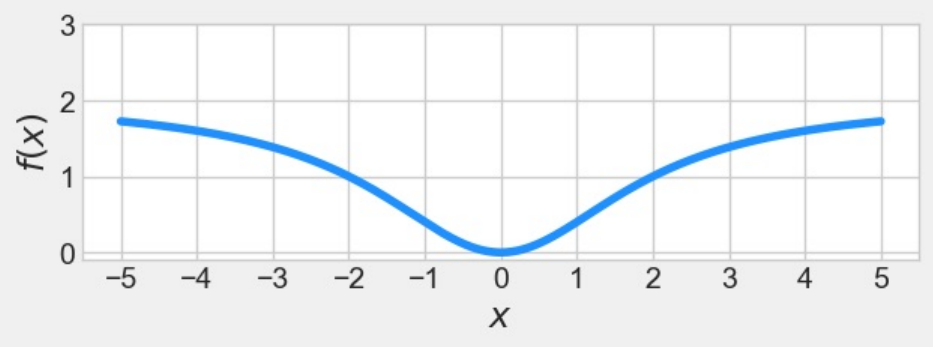}
        \end{minipage}
    
        \begin{minipage}[t]{0.45\linewidth}
        \centering
        \includegraphics[bb=0.000000 0.000000 447.840896 164.880330,width=2.5in]{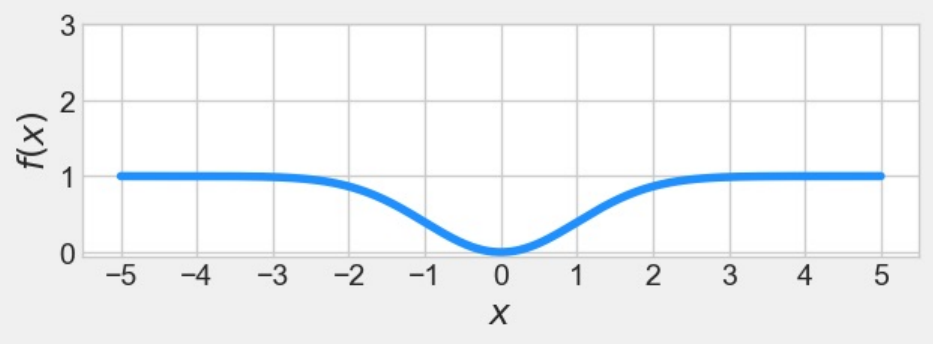}
        \end{minipage}%
    \end{tabular}
    \centering
    \caption{(Left) Graph of Geman-McClure loss function. (Right) Graph of Welsch loss function.}
    \label{fig:nonconvex_robust}
\end{figure}

The $(i,j)$ entry of $A(t)\in \mathbb{R}^{10 \times 10}$ and $i$-th entry of $b(t)\in \mathbb{R}^{10}$ were set to
\begin{align*} 
  A_{ij}(t) &=
  \left\{
  \begin{array}{cc}
   1 + 0.05 \cos\paren*{\frac{t}{200} + \frac{2\pi i}{10}},  & i = j \leq 5\\
   10 (1 + 0.05 \cos\paren*{\frac{t}{200} + \frac{2\pi i}{10}}), & i = j > 5\\
   0, & i \neq j
  \end{array}
  \right., \\
  b_i(t) &= 50 \sin\paren*{\frac{t}{100} + \frac{2\pi i}{10}},\ 1 \leq i \leq 10.
\end{align*}

The parameter settings for the two loss functions are the same, and they are summarized in Table~\ref{table:nr_param_setting}.
The parameter $\gamma$ of U-FOPC was fixed to 0 based on the observations in Section~\ref{sec:nonconvex_toy}. We set the stepsizes to $(\alpha = ) \beta = 0.01 \simeq 1/L_1 = 1/(10.5)^2$ for all the algorithms.
The parameter $\zeta$ for FOA-Min and CP was determined by assuming $\| x \| \leq 100$ and following
\begin{align*}
  \frac{| \nabla_t f(x;t) |}{\| \nabla_x f(x;t) \|} \leq (\| A'(t) \| \| x \| + \|b'(t)\|)\| A^{-1}(t) \| \leq \frac{(0.5 / 200) \times 100  + 50\sqrt{5} / 100}{0.95} \leq 1.5 =: \zeta.
\end{align*}
The number of correction steps $C$ for each algorithm was determined so that the computational time per iteration was the same for all the algorithms. The computational time for the prediction and correction steps of each algorithm is summarized in Table~\ref{table:nr_comp_time}.

\begin{table}[h]
\centering
  \caption{Parameter settings in non-convex regression.}
  \begin{tabular}{ccc}
    \toprule
    \multicolumn{2}{c}{Common} 
    \\ \hline
    $(T, h)$ & $x_{0 \mid -1}$
    \\ \hline
    $(2e^{4}, 0.05), (1e^{5}, 0.01), (1e^{6}, 1e^{-3})$ & $x_{0 \mid -1}\sim\mathcal{N}(0, \mathrm{I})$ \\ 
    \bottomrule
    \end{tabular}%

\vspace{2.5mm}

\begin{tabular}{cc|ccccc|cccc|cccc}
    \toprule
    \multicolumn{2}{c}{TVGD} & \multicolumn{5}{|c|}{U-FOPC} &  \multicolumn{4}{c|}{FOA-Min} &  \multicolumn{4}{c}{CP}
    \\ \hline
    $C$ & $\beta$ & $P$ & $C$ & $\alpha$ & $ \beta$ & $\gamma$ & $C$ & $\beta$ & $\zeta$ & $\delta$ & $C$ & $\beta$ & $\zeta$ & $\delta$ 
    \\ \hline
    $4$ & $0.01$ & $10$ & $1$ & $0.01$ & $0.01$ & $0$ & $3$ & $0.01$ & $1.5$ & $1e^{-10}$ & $1$ & $0.01$ & $2.5$ & $1e^{-10}$\\ 
    \bottomrule
    \end{tabular}%
\label{table:nr_param_setting}%
\end{table}%

\begin{table}[h]
\centering
  \caption{Computational time required to correct and predict.}
  \begin{tabular}{ccccc}
    \toprule
    & Corr. & Pred. (U-FOPC) & Pred. (FOA-Min) & Pred. (CP) \\ \hline
    Time [s] & $6.8e^{-5}$ & $2.7e^{-4}$ & $7.3e^{-5}$ & $2.5e^{-4}$ \\ 
    \bottomrule
    \end{tabular}%
  \label{table:nr_comp_time}
\end{table}

Figure~\ref{fig:rational_1e-3} and Figure~\ref{fig:exp_1e-3} show the optimization results for two losses when $h=1e^{-3}$.
FOA-Min outperforms the existing algorithms regarding the function value and the gradient norm. 
The performance of CP is worse than that of FOA-Min and about the same as that of TVGD, 
because CP takes much time to predict and the number of corrections per iteration is a quarter of that of TVGD.

We also ran experiments with different sampling periods to check the change in the accuracy of the solutions.
The results are shown in Figure~\ref{fig:rational_different_h} and Figure~\ref{fig:exp_different_h}.
The figures indicate that two proposed methods, including CP, achieve $O(h)$ optimality gap and gradient norm in terms of both maximum and mean.


\begin{figure}[h]
    \centering
\begin{tabular}{cc}
    \begin{minipage}[t]{0.45\linewidth}
    \centering
        \includegraphics[bb=0.000000 0.000000 447.120894 283.680567,width=2.5in]{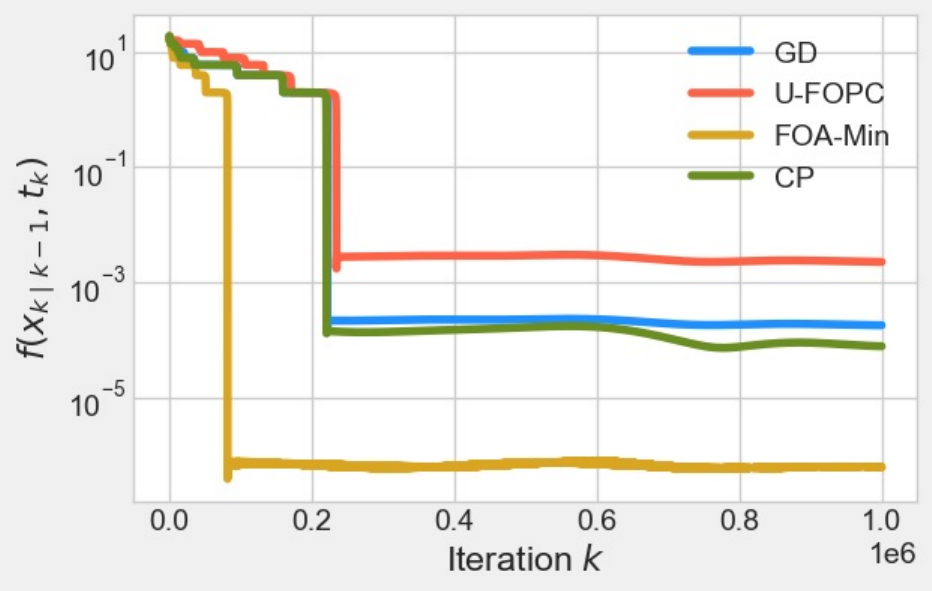}
    \end{minipage}

    \begin{minipage}[t]{0.45\linewidth}
    \centering
        \includegraphics[bb=0.000000 0.000000 447.120894 283.680567,width=2.5in]{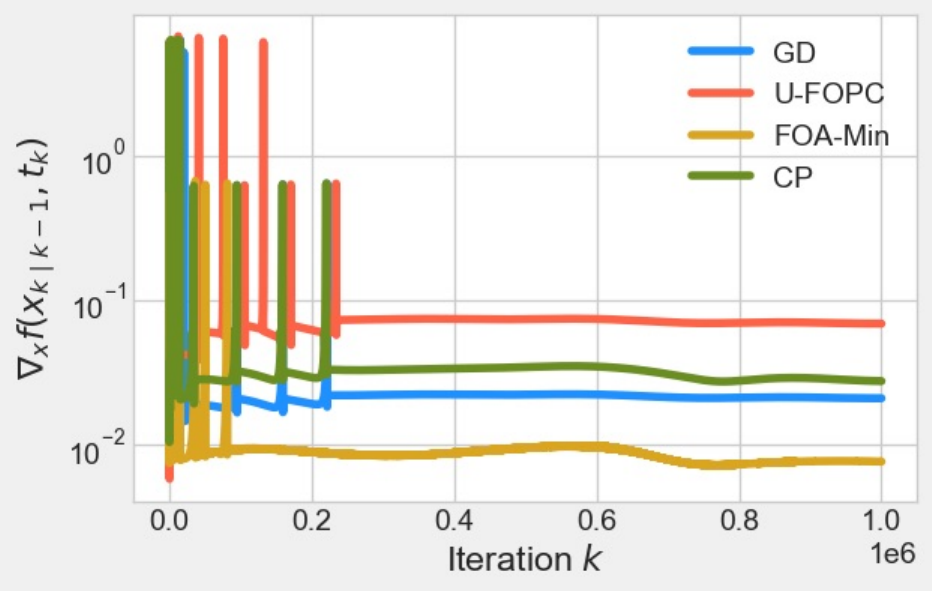}
    \end{minipage}%
\end{tabular}
\centering
    \caption{Log plots of function value and gradient norm for Geman-McClure loss function with the sampling period of $h = 1e^{-3}$.}
    \label{fig:rational_1e-3}
\end{figure}

\begin{figure}[h]
    \centering
\begin{tabular}{cc}
    \begin{minipage}[t]{0.45\linewidth}
    \centering
        \includegraphics[bb=0.000000 0.000000 447.120894 283.680567,width=2.5in]{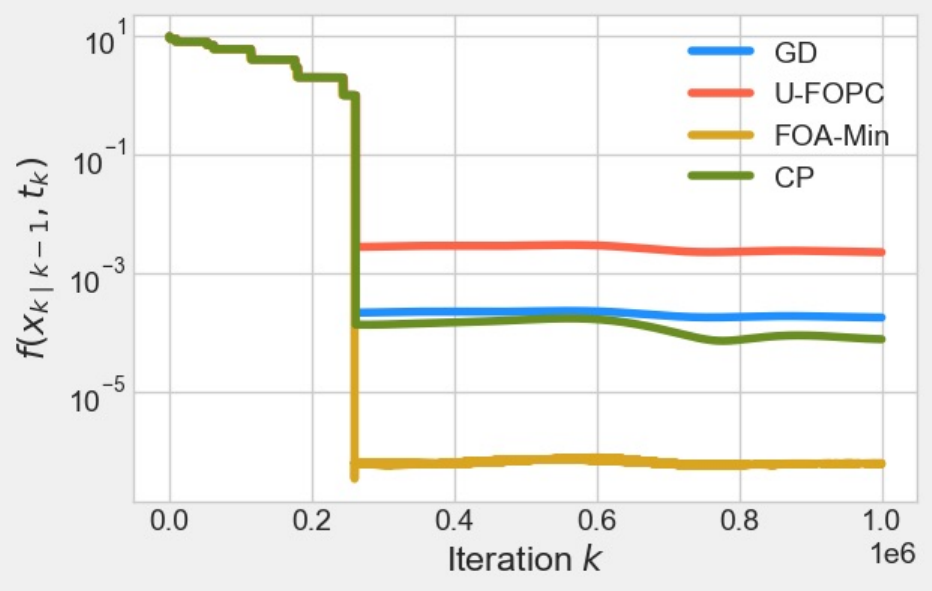}
    \end{minipage}

    \begin{minipage}[t]{0.45\linewidth}
    \centering
        \includegraphics[bb=0.000000 0.000000 452.160904 283.680567,width=2.5in]{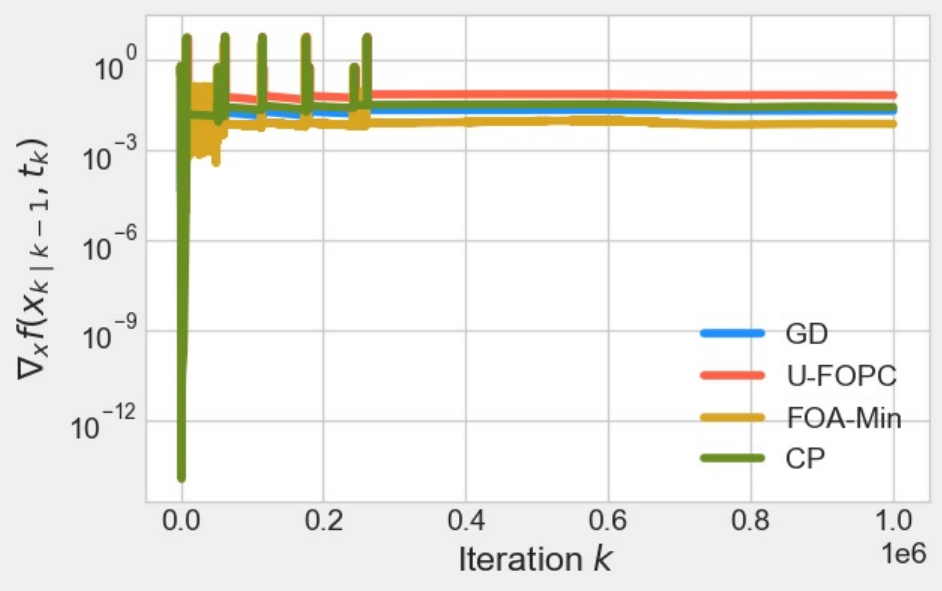}
    \end{minipage}%
\end{tabular}
\centering
    \caption{Log plots of function value and gradient norm for Welsch loss function with the sampling period of $h = 1e^{-3}$.}
    \label{fig:exp_1e-3}
\end{figure}

\begin{figure}[h]
    \centering
\begin{tabular}{cc}
    \begin{minipage}[t]{0.45\linewidth}
    \centering
        \includegraphics[bb=0.000000 0.000000 439.920880 295.200590,width=2.5in]{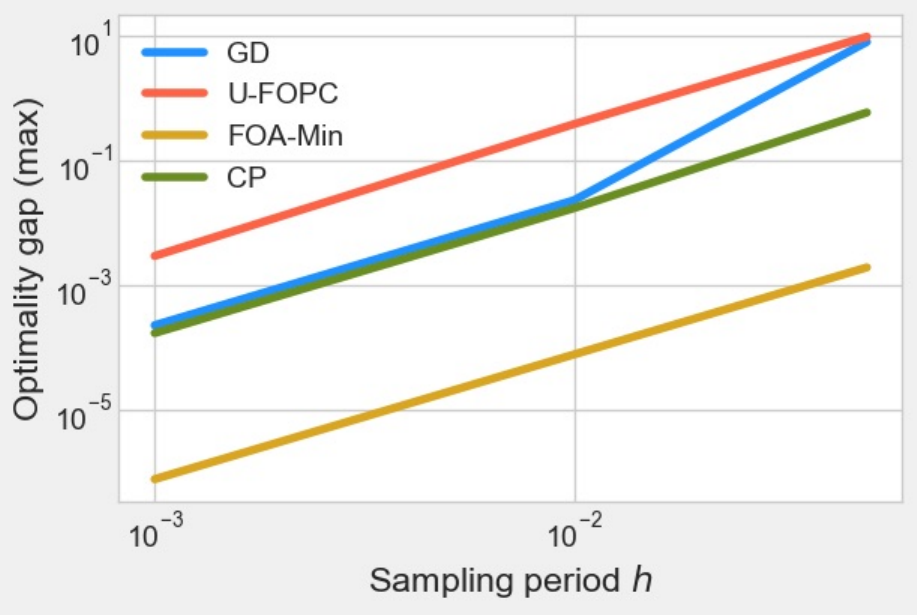}
    \end{minipage}

    \begin{minipage}[t]{0.45\linewidth}
    \centering
        \includegraphics[bb=0.000000 0.000000 439.920880 295.200590,width=2.5in]{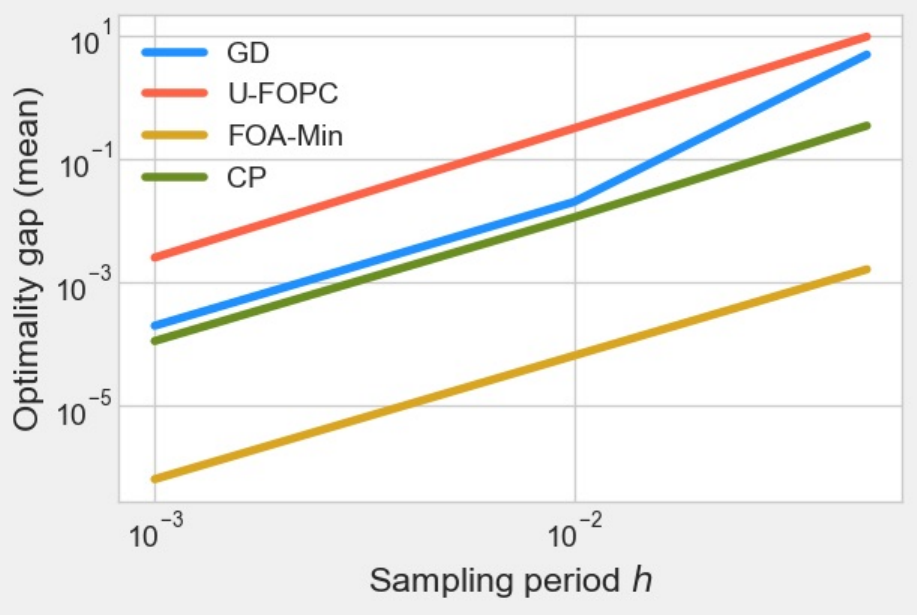}
    \end{minipage}%
\end{tabular}

\begin{tabular}{cc}
    \begin{minipage}[t]{0.45\linewidth}
    \centering
        \includegraphics[bb=0.000000 0.000000 439.920880 295.200590,width=2.5in]{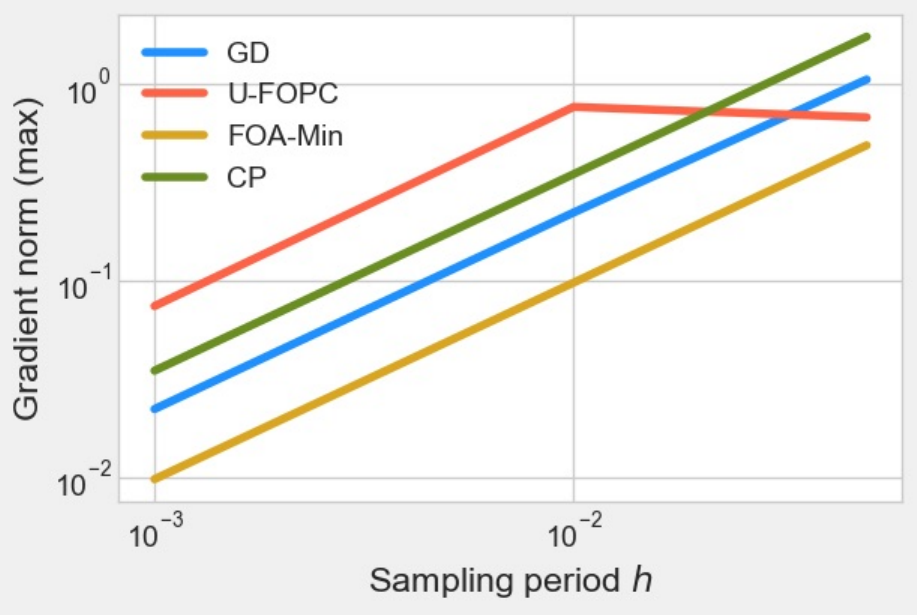}
    \end{minipage}

    \begin{minipage}[t]{0.45\linewidth}
    \centering
        \includegraphics[bb=0.000000 0.000000 439.920880 295.200590,width=2.5in]{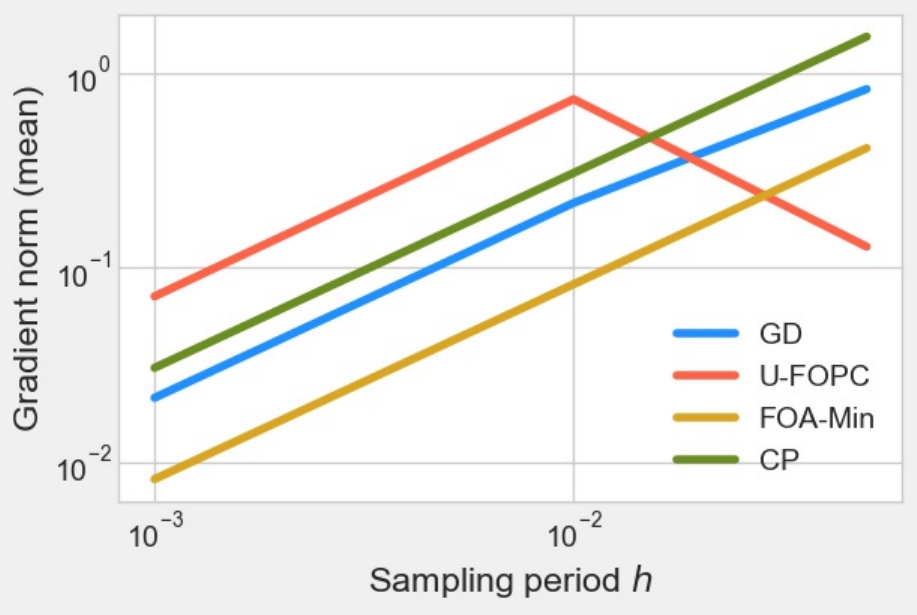}
    \end{minipage}%
\end{tabular}

\centering
    \caption{For Geman-McClure loss function, (Top) Log-log plots of maximum and mean of the gradient norm $ \| \nabla_x f(x_{k \mid k-1};t_k) \|$ versus sampling period. (Bottom) Log-log plots of maximum and mean of the optimality gap $f(x_{k \mid k-1};t_k) - f_k^\ast$ versus sampling period.
    Maximum and mean are computed based on the results of the last half of the iterations.}
    \label{fig:rational_different_h}
\end{figure}

\begin{figure}[h]
    \centering
\begin{tabular}{cc}
    \begin{minipage}[t]{0.45\linewidth}
    \centering
        \includegraphics[bb=0.000000 0.000000 439.920880 303.120606,width=2.5in]{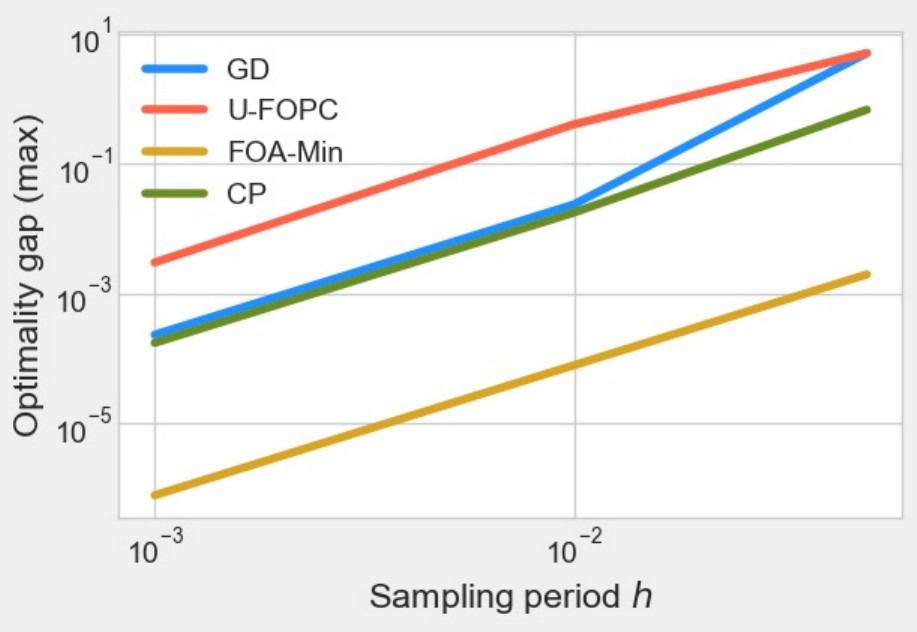}
    \end{minipage}

    \begin{minipage}[t]{0.45\linewidth}
    \centering
        \includegraphics[bb=0.000000 0.000000 439.920880 303.120606,width=2.5in]{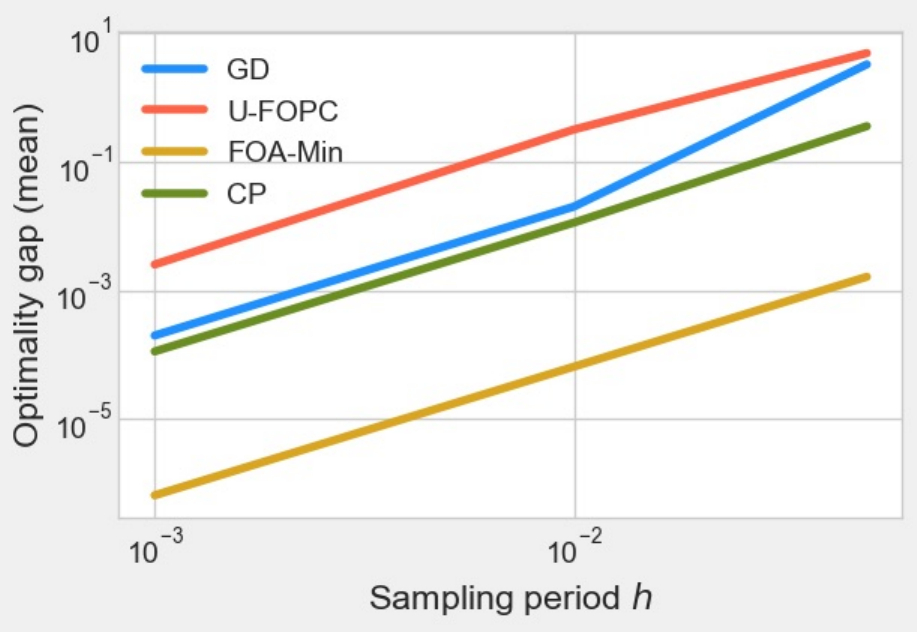}
    \end{minipage}%
\end{tabular}

\begin{tabular}{cc}
    \begin{minipage}[t]{0.45\linewidth}
    \centering
        \includegraphics[bb=0.000000 0.000000 439.920880 295.200590,width=2.5in]{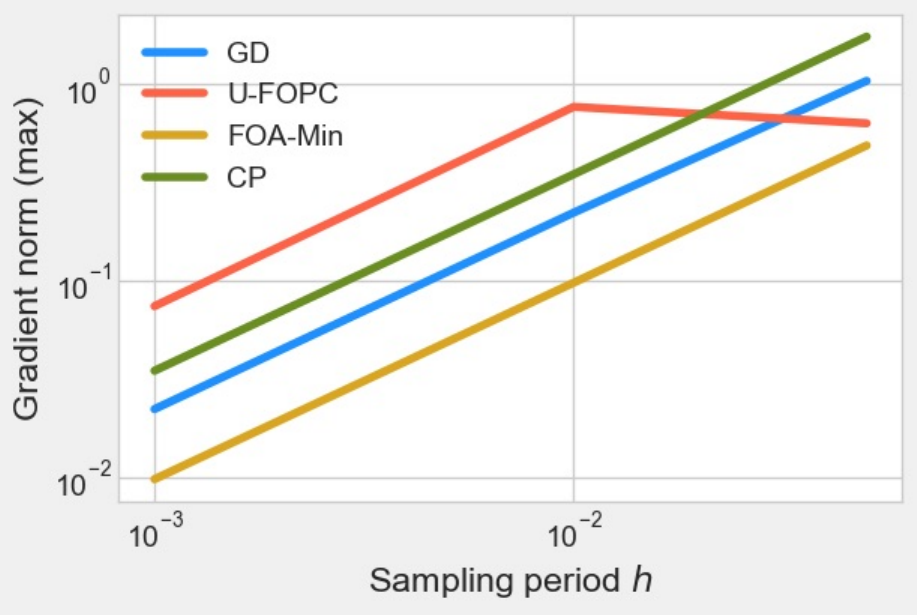}
    \end{minipage}

    \begin{minipage}[t]{0.45\linewidth}
    \centering
        \includegraphics[bb=0.000000 0.000000 439.920880 295.200590,width=2.5in]{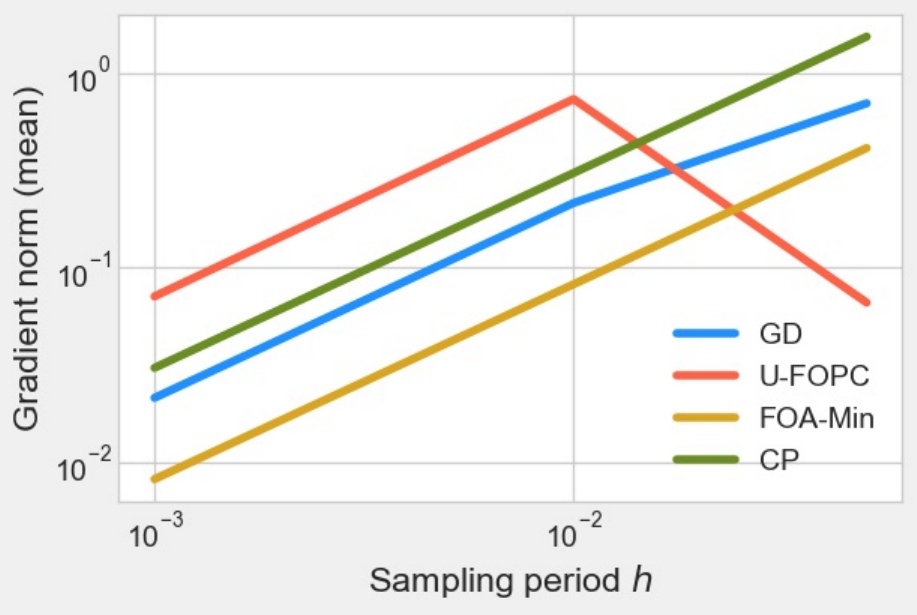}
    \end{minipage}%
\end{tabular}

\centering
    \caption{For Welsch loss function, (Top) Log-log plots of maximum and mean of the gradient norm $ \| \nabla_x f(x_{k \mid k-1};t_k) \|$ versus sampling period. (Bottom) Log-log plots of maximum and mean of the optimality gap $f(x_{k \mid k-1};t_k) - f_k^\ast$ versus sampling period.
    Maximum and mean are computed based on the results of the last half of the iterations.}
    \label{fig:exp_different_h}
\end{figure}

\subsection{Matrix Factorization} \label{sec:mf_main}
Matrix factorization is a non-convex large-scale problem; here, we consider the following time-varying objective function:
\begin{align*}
\min _{
        P, Q 
    }
    \frac{1}{|K(t)|}
     \sum_{(u, i) \in K(t)}
     & \left( R_{u i}-P_u^T Q_i \right)^2 +\lambda \left(\left\|P_u\right\|^2+\left\|Q_i\right\|^2\right).
\end{align*}  
where $P \in \mathbb{R}^{F \times U}$ and $Q \in \mathbb{R}^{F \times I}$ are the matrices whose $u$-th column $P_u$ and $i$-th column $Q_i$ represent properties of the user $u$ and item $i$, respectively.
Let $R\in \mathbb{R}^{U \times I}$ be a matrix whose $(u, i)$ entry $R_{ui}$ denotes the user $u$'s rating of the item $i$, and $\lambda$ is a regularization parameter. The time-varying set $K(t)$ consists of index pairs $(u, i)$ for which ratings $R_{ui}$ are known, and its size $|K(t)|$ is increased by $N$ at each iteration as new $N$ rating data are revealed.


In the numerical experiment, the Netflix Prize Dataset~\citep{bennett2007netflix} was used.
We deleted users and items whose number of ratings was smaller than thresholds from part of the dataset so that the total number of ratings was $443371$.
After arranging these rating data in chronological order, we split them into two equal parts: Datasets 1 and 2. Dataset 1 included ratings from 1999/12 to 2004/7, and its size of $R$ was $811 \times 711$. Dataset 2 included ratings from 2004/7 to 2005/12, and its size of $R$ was $1094 \times 774$. We conducted experiments on each dataset. When using Dataset 1, we revealed new $N=5, 10, 15$ rating data per time step and ran the algorithms until all the data were revealed.
When using Dataset 2, we set the number of ratings revealed per time step to three times the value used in Dataset 1,
since the latter period had three times the number of ratings per unit of time. The initial set $K(0)$ consisted of the first 100000 ratings in chronological order.
U-FOPC and CP could not be implemented since the problem was so large in scale that it took a significant computational time to calculate the Hessian for each iteration; thus, we only compared TVGD
and FOA-Min.
We chose two types of initial value: points whose gradient norms were $0.1$ or $1e^{-4}$. These points were obtained by solving the invariant matrix factorization problem with $K(t)=K(0)$ by applying GD. The parameters $\beta$ and $\zeta$ were both set to $10$, which was the best value among $\{1, 2, 5, 10, 100\}$. 
The number of correction steps $C$ was determined from the computational time of the correction and prediction shown in Table~\ref{table:netflix_comp_time}. 
See Table~\ref{table:netflix_param_setting} for the settings of other parameters.

\begin{table}[H]
\centering
  \caption{Computational time required to correct and predict.}
  \begin{tabular}{ccc}
    \toprule
    & Corr. & Pred. (FOA-Min) \\ \hline
    Time [s] & $7.3e^{-5}$ & $6.7e^{-5}$ \\ 
    \bottomrule
    \end{tabular}%
  \label{table:netflix_comp_time}
\end{table}

\begin{table}[H]
  \centering
    \caption{Parameter settings in matrix factorization.}
    \begin{tabular}{ccccc|c|ccc}
      \toprule
      \multicolumn{5}{c}{Common} & \multicolumn{1}{|c|}{TVGD} & \multicolumn{3}{c}{FOA-Min}
      \\ \hline
      $F$ & $\lambda$ & $h$ & $x_{0 \mid -1}$ & $\beta$ & $C$ & $C$ & $\zeta$ & $\delta$
      \\ \hline
      $20$ & $0.01$ & $0.01$ & $\| \nabla_x f(x_{0 \mid -1};t_0) \| = 0.1\ \mathrm{or}\ 1e^{-4}$ & $10$ & $2$ & $1$ & $10$ & $1e^{-10}$ \\ 
      \bottomrule
      \end{tabular}%
  \label{table:netflix_param_setting}%
\end{table}%

Figure~\ref{fig:netflix_head_10} shows the optimization results when we used Dataset 1, and the number of data revealed per iteration was $N=10$.
Here, FOA-Min achieves higher accuracy than that of TVGD in terms of both the function value and gradient norm.
We can see that regardless of the gradient norm at the initial point, TVGD and FOA-Min converge to points with similar gradient norms. The maximum gradient norms over the last 2000 iterations were more than $3.5e^{-3}$ for TVGD and less than $1.7e^{-3}$ for FOA-Min.
Similar results were obtained when we changed the dataset and the number of data revealed per iteration (see Appendix~\ref{sec:mf_appendix}).
These experimental results imply that the proposed prediction method, which uses a stepsize normalized by the gradient norm, would be effective for tracking stationary points of the time-varying matrix factorization problems. 

\begin{figure}[h]
    \centering
\begin{tabular}{cc}
    \subfigure[Function value (initial gradient norm $=0.1$)]{
    \begin{minipage}[t]{0.45\linewidth}
    \centering
        \includegraphics[bb=0.000000 0.000000 467.280935 283.680567,width=2.5in]{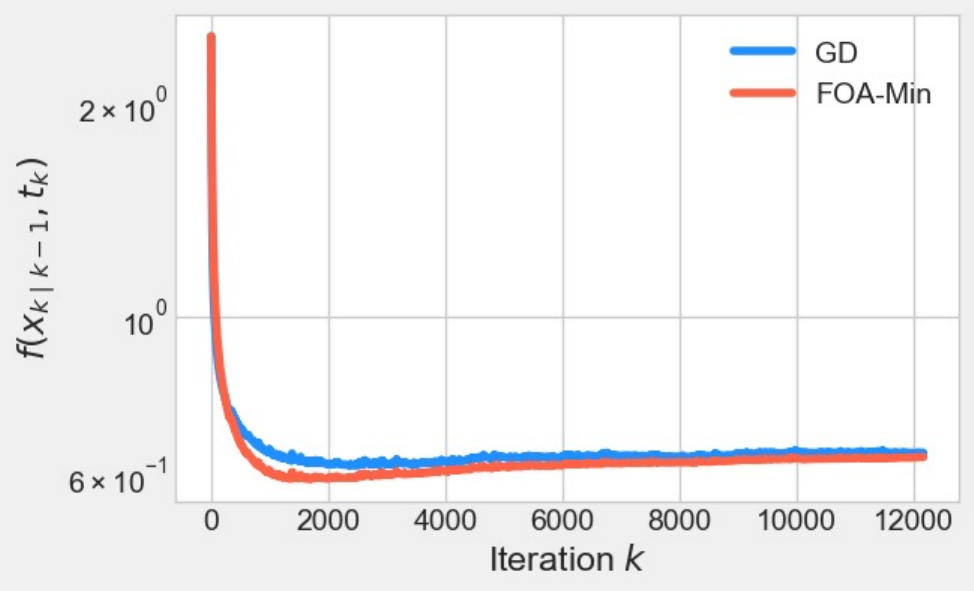}
    \end{minipage}
    }

    \subfigure[Gradient norm (initial gradient norm $=0.1$)]{
    \begin{minipage}[t]{0.45\linewidth}
    \centering
        \includegraphics[bb=0.000000 0.000000 447.120894 283.680567,width=2.5in]{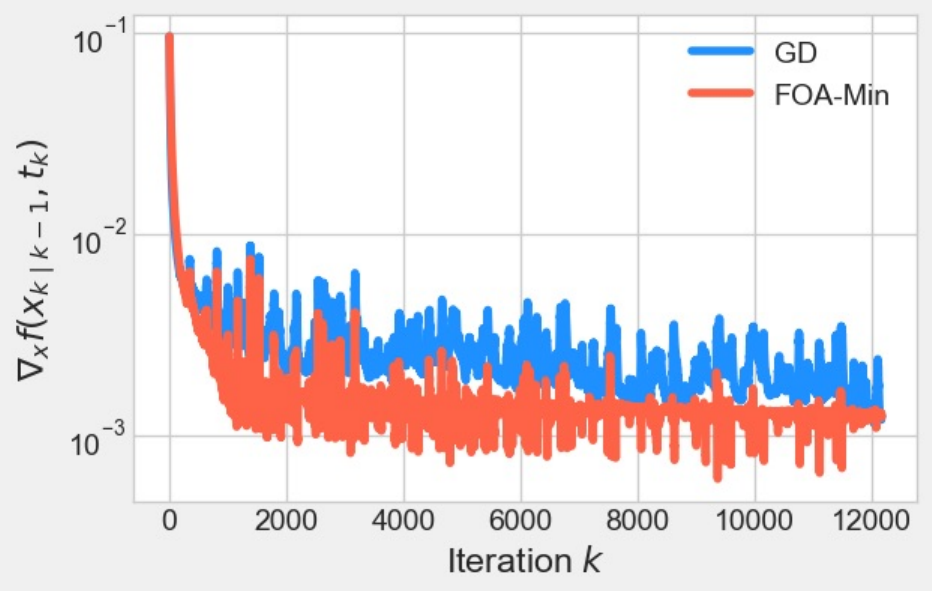}
    \end{minipage}%
    }
\end{tabular}

\begin{tabular}{cc}
    \subfigure[Function value (initial gradient norm $= 1e^{-4}$)]{
    \begin{minipage}[t]{0.45\linewidth}
    \centering
        \includegraphics[bb=0.000000 0.000000 446.400893 283.680567,width=2.5in]{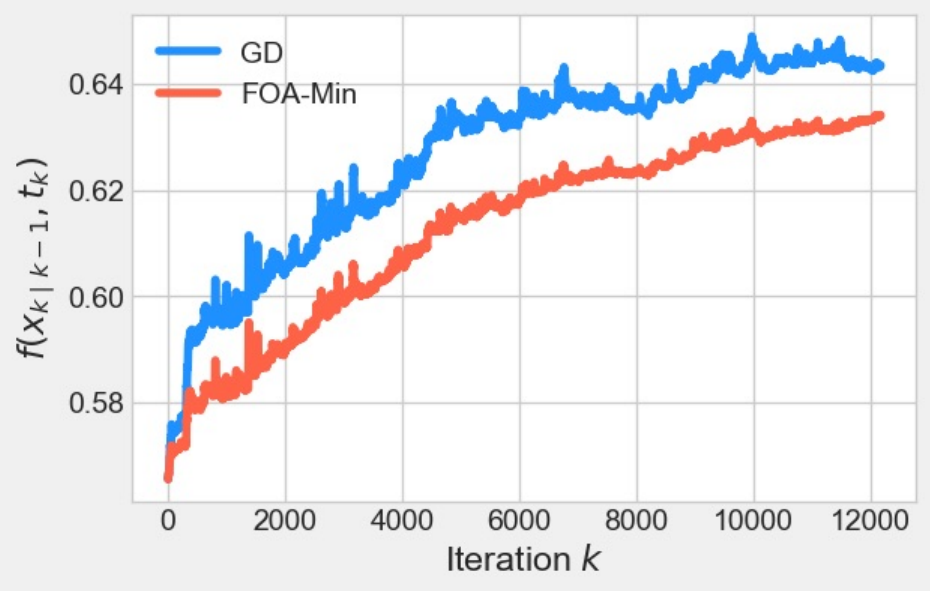}
    \end{minipage}
    }

    \subfigure[Gradient norm (initial gradient norm $=1e^{-4}$)]{
    \begin{minipage}[t]{0.45\linewidth}
    \centering
        \includegraphics[bb=0.000000 0.000000 447.120894 290.160580,width=2.5in]{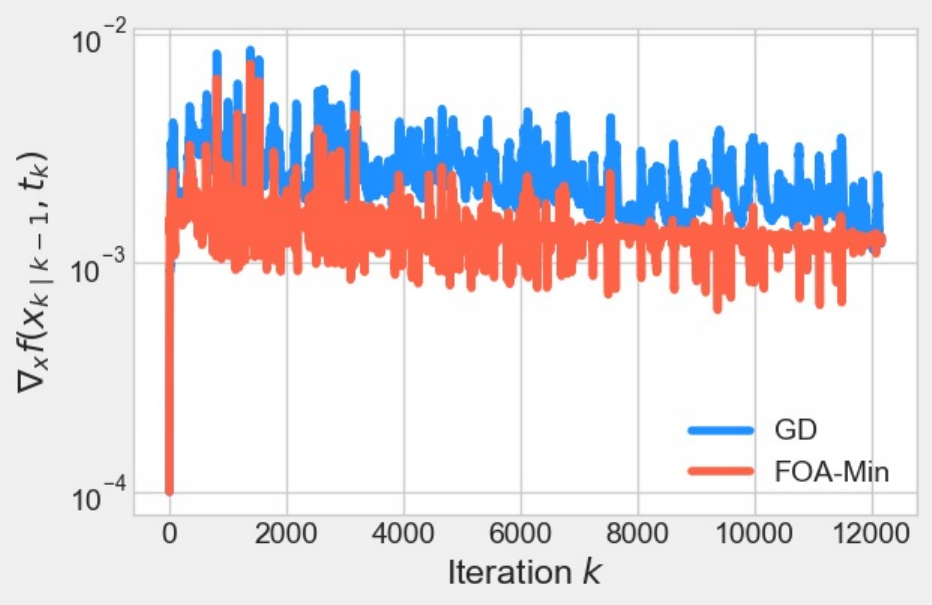}
    \end{minipage}%
    }
\end{tabular}

\centering
    \caption{Plots of function value and gradient norm for Dataset 1, and number of data revealed per iteration $N$ set to 10.}
    \label{fig:netflix_head_10}
\end{figure}

\section{Summary \& Future Directions}
We proposed a new prediction-correction algorithm for time-varying smooth non-convex optimization that is more accurate than the existing algorithms and that is applicable to large-scale problems.
We also performed convergence analyses on the TVGD and proposed algorithms for non-SC objective functions.
Numerical experiments demonstrated that the proposed methods are able to track an $O(h)$-stationary point and outperform existing methods on problems with synthetic and real datasets.

The development of prediction-correction algorithms for non-SC and large-scale optimization problems is important since its range of applications is not restricted to time-varying optimization problems (see Appendix~\ref{chap:applications} for more details).
Further study will be needed to extend the problem settings to constrained optimization, stochastic optimization, and optimization with fewer assumptions on the smoothness and derivatives.  

\appendix

\section{Applications} \label{chap:applications}

The range of applications of prediction-correction algorithms is not restricted to time-varying optimization problems. Here, we introduce promising application fields with related work on methods using the prediction-correction scheme.

Throughout this paper, we considered the following optimization problem
\begin{align*}
    \min_{x \in \mathbb{R}^d} f(x;t_k),\ k\in\mathbb{Z}_{\geq 0}. \tag{\ref{eq:time_varying_problem}}
\end{align*}
If we treat $t_k$ as a general parameter, not necessarily a time parameter, then, 
Problem~\eqref{eq:time_varying_problem} (or its continuous version) can be regarded as a general parametric optimization problem. 
(Continuous version of) Problem~\eqref{eq:time_varying_problem} can be regarded as a special case of a parametric optimization problem 
\begin{align*}
    \min_{x\in\mathbb{R}^d} f(x;\lambda)
\end{align*} where $\lambda\in[\lambda_0, \lambda_T] \subseteq \mathbb{R}$ denotes a parameter.
This problem has been solved in various fields including parametric programming, continuation methods, and interior point methods in constrained optimization.
A method to track the optimal solution path $x^\ast(t)$
using the prediction-correction scheme similar to those provided in this paper is referred to as a {\it predictor-corrector method}. It has been widely studied in these fields from the past to the present~\citep{dontchev2013euler, allgower1993continuation, potra2000interior}.

In parametric programming in machine learning, various predictor-corrector algorithms have been proposed and applied to estimate the optimal solution path of the nonlinear regularized optimization~\citep{park2007l1, si2022path, wang2007kernel, krishnamurthy2011pathwise}.
\citet{park2007l1} proposed Newton's method-based prediction for the $\ell_1$ regularized problem with adaptive stepsize for the regularization parameter $\lambda$. \citet{si2022path} considered an algorithm using Newton's method in both the prediction and correction for a possibly nonconvex optimization problem including the $\ell_p$ regularized optimization. However, these algorithms cannot apply to large-scale or general non-convex optimization problems due to using the Hessian inverse.
Prediction-correction algorithms applicable to such problems will improve the tracking accuracy of the solution path or reduce the total computational time when the accuracy to be obtained is fixed.

Moreover, in recent work, \citet{hazan2016graduated, iwakiri2022single} apply continuation or smoothing methods to non-convex optimization problems to obtain an optimal or a good solution regardless of the choice of the initial point. It might be interesting to combine prediction-correction algorithms with them to solve non-convex optimization problems efficiently.

\section{Theorems for PL Functions with Lipschitzness of Gradient in Terms of $t$} \label{chap:PL_lip_grad_t}

\begin{theo} \label{theo:PL_lip_t_grad}
Consider the sequence $\left\{x_k\right\}$ generated by Algorithm~\ref{algo:GD}. Suppose that the objective function $f$ is $\mu$-PL function in terms of $x$, and Assumptions~\ref{assu:prev_sc}(i)-(ii) hold. Set the stepsize as $\beta = 1/L_1$.
Then, when $2\mu > L_1$ holds, Algorithm~\ref{algo:GD} can find an $\frac{1 + \sqrt{2(L_1/\mu - 1)}}{2 - L_1/\mu}L_2h$-stationary point in
$
\frac{2(2\mu - L_1)(f(x_0;t_0) - f_0^\ast)}{L_2^2 h^2}
$
iterations.
\end{theo}

\begin{proof}
    Let us define
\begin{align*}
    r := \frac{1 + \sqrt{2(L_1/\mu - 1)}}{2 - L_1/\mu}L_2h > 0.
\end{align*}
Now, we assume that $\| \nabla_x f(x_{k+1};t_{k+1}) \| > r $ holds. Then, we can see that $\| \nabla_x f(x_{k+1};t_{k+1}) \| > L_2h$ holds and
the optimality gap can be bounded as 
\begin{align}
    &\paren*{1-\frac{\mu}{L_1}}[(f(x_k;t_k) - f_{k}^\ast) - (f(x_{k+1};t_{k+1}) - f_{k+1}^\ast)] \nonumber\\
    & \geq (f(x_{k+1};t_k) - f_k^\ast) - \paren*{1-\frac{\mu}{L_1}}(f(x_{k+1};t_{k+1}) - f_{k+1}^\ast) \nonumber\\
    &\geq \frac{1}{2L_1} \| \nabla_x f(x_{k+1};t_{k}) \|^2 - \frac{L_1 - \mu}{2L_1\mu} \| \nabla_x f(x_{k+1};t_{k+1}) \|^2 \nonumber\\
    &\geq \frac{1}{2L_1} (\| \nabla_x f(x_{k+1};t_{k+1}) \| - L_2h) ^2 - \frac{L_1 - \mu}{2L_1\mu} \| \nabla_x f(x_{k+1};t_{k+1}) \|^2  \nonumber\\
    &= \frac{2 - L_1/\mu}{2L_1}\left[ \paren*{\norm{\nabla_x f(x_{k+1};t_{k+1})} - \frac{L_2h}{2 - L_1/\mu}}^2 - \paren*{\frac{L_2h\sqrt{L_1/\mu - 1}}{2 - L_1/\mu}}^2 \right] \nonumber\\
    &> \frac{2 - L_1/\mu}{2L_1}\left[ 2\paren*{\frac{L_2h\sqrt{L_1/\mu - 1}}{2 - L_1/\mu}}^2 - \paren*{\frac{L_2h\sqrt{L_1/\mu - 1}}{2 - L_1/\mu}}^2 \right] \nonumber\\
    &= \frac{(L_1 / \mu - 1) L_2^2}{2L_1(2 - L_1 / \mu)}h^2,\nonumber
\end{align}
where the second inequality holds due to Assumption~\ref{assu:prev_sc}(i) and the definition of a PL function, the third inequality holds due to Assumption~\ref{assu:prev_sc}(ii) and $\| \nabla_x f(x_{k+1};t_{k+1}) \| > L_2h$, and the last inequality holds from $\norm{\nabla_x f(x_{k+1};t_{k+1})} > r > \frac{L_2h}{2 - L_1 \ \mu}$. By dividing both sides by $1 - \frac{\mu}{L_1}$, we can get
\begin{align} \label{eq:GD_PL_Inequality}
    (f(x_k;t_k) - f_{k}^\ast) - (f(x_{k+1};t_{k+1}) - f_{k+1}^\ast) > \frac{L_2^2}{2(2\mu - L_1)}h^2 > 0.
\end{align} 
This inequality implies that the optimality gap $\{f(x_{k};t_{k}) - f_k^\ast\}$ decreases $\frac{L_2^2}{2(2\mu - L_1)}h^2$ if $x_{k+1}$ is not an $r$-stationary point.
Therefore, the number of iterations to find an $r$-stationary point is at most
\begin{align*}
    T = \frac{2(2\mu-L_1)}{L_2^2}\frac{f(x_0;t_0) - f_0^\ast}{h^2}.
\end{align*}
\end{proof}

As in the non-convex case, we can also guarantee that every iterate satisfies a desirable property after finding an $\frac{1 + \sqrt{2(L_1/\mu - 1)}}{2 - L_1/\mu}L_2h$-stationary point.

\begin{theo} \label{theo:PL_without_prediction_last}
Consider the same setting as Theorem~\ref{theo:PL_lip_t_grad}, and define $r:=\frac{1 + \sqrt{2(L_1/\mu - 1)}}{2 - L_1/\mu}L_2h$.
Then, once an $r$-stationary point is reached at iteration $\bar{T}$, every subsequent iterate 
$x_k\ (k \geq \bar{T})$
satisfies at least one of the following two conditions:
    \begin{itemize}
        \item[(a)] The iterate $x_k$ is an $r$-stationary point of $f(x;t_k)$:
        \item[(b)] There exists an integer $l<k$ such that $x_l$ is an $r$-stationary point of $f(x;t_l)$, and 
        \begin{align*}
            f(x;t_k) - f_k^\ast < f(x_l;t_{l}) - f_l^\ast
        \end{align*} holds.
    \end{itemize}
\end{theo}

\begin{proof}
For some $k \geq \bar{T}$, suppose that (a) does not hold, that is, $\| \nabla_x f(x_k;t_k) \| > r$ holds. Let $l<k$ be an integer satisfying
    \begin{align}
        \| \nabla_x f(x_l;t_l) \| &\leq r, \nonumber \\ 
        l \leq \forall j < k,\ \| \nabla_x f(x_{j+1};t_{j+1}) \| &> r. \label{eq:non_rh_stationary}
    \end{align}
    Inequality~\eqref{eq:GD_PL_Inequality}, which can be used due to \eqref{eq:non_rh_stationary}, yields
    \begin{align*}
        l \leq \forall j < k,\ &(f(x_j;t_j) - f_{j}^\ast) - (f(x_{j+1};t_{j+1}) - f_{j+1}^\ast) > 0.
    \end{align*}
By summing up the above inequalities, we can obtain
\begin{align*}
     (f(x_l;t_l) - f_{l}^\ast) - (f(x_{k};t_{k}) - f_{k}^\ast) > 0,
\end{align*}
which implies that (b) holds.
\end{proof}

\section{Technical Lemma} \label{chap:proofs_for_section4}

\begin{lemm} \label{lemm:ratio_between_inner_products}
     Define $a,x \in \mathbb{R}^n$ and $A \in \mathbb{R}^{m \times n}$. When $A$ is full row rank, and its singular values $\sigma_1 \leq \ldots \leq \sigma_m$ are positive, we have
     \begin{align*}
          \frac{|\langle a, x \rangle|}{\| \langle A, x \rangle \|} \leq \frac{\| a \|}{\sigma_1}.
     \end{align*}
\end{lemm}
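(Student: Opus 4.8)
The plan is to reduce the inequality to two elementary ingredients: a lower bound on $\|Ax\|$ coming from the smallest singular value, and Cauchy--Schwarz on the numerator. I would start from the singular value decomposition $A = U\Sigma V^\top$, with $U \in \mathbb{R}^{m\times m}$ and $V \in \mathbb{R}^{n\times n}$ orthogonal and $\Sigma \in \mathbb{R}^{m\times n}$ carrying $\sigma_1 \le \cdots \le \sigma_m$ on its diagonal. Writing $y = V^\top x$, orthogonality of $U$ gives $\|Ax\|^2 = \|\Sigma y\|^2 = \sum_{i=1}^m \sigma_i^2 y_i^2 \ge \sigma_1^2 \sum_{i=1}^m y_i^2$. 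This is the quantitative statement that, because $A$ has full rank, it does not contract the row space by more than a factor $\sigma_1$.

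The key step is to control the numerator by the same quantity $\sum_{i=1}^m y_i^2$. The clean way is to split $x = Px + (I-P)x$, where $P$ is the orthogonal projector onto the row space $\mathrm{range}(A^\top) = \mathrm{span}\{v_1,\dots,v_m\}$ and $(I-P)x \in \ker A$. Since $Ax = A(Px)$, the denominator only ever sees $Px$, and the singular value bound above reads $\|Ax\| = \|A(Px)\| \ge \sigma_1 \|Px\|$. For the numerator, the vector $a$ contributes only through its row-space component, so $|\langle a, x\rangle| = |\langle a, Px\rangle| \le \|a\|\,\|Px\|$ by Cauchy--Schwarz. Dividing the two bounds cancels $\|Px\|$ and yields $|\langle a, x\rangle|/\|Ax\| \le \|a\|/\sigma_1$, which is exactly the claim.

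The main obstacle is a dimensional subtlety rather than a hard estimate: a genuinely wide full-row-rank $A$ (with $m < n$) has a nontrivial kernel, so $\|Ax\| \ge \sigma_1\|x\|$ can fail outright (take $x \in \ker A$), and the numerator really must be routed through its row-space part as above. Fortunately, in every place the lemma is invoked the matrix playing the role of $A$ is injective --- the strongly convex Hessian $\nabla_{xx}\bar f$ in Proposition~\ref{prop:SC_satisfies_assump} and the tall, column-full-rank $(\nabla_x f_2)^\top$ in Proposition~\ref{prop:nonlinear_transform_satisfies_assump} --- so $Px = x$, the projection step is vacuous, and $\|Ax\| \ge \sigma_1 \|x\|$ holds verbatim for all $x$, recovering the stated bound directly from Cauchy--Schwarz. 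I would therefore organize the proof around the identity $Ax = A(Px)$ so that the cancellation of $\|Px\|$ is transparent and the conclusion is correct in the form that is actually used downstream.
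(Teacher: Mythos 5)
The step that fails is the numerator identity in your second paragraph: $|\langle a, x\rangle| = |\langle a, Px\rangle|$ is not an identity. Writing $x = Px + (I-P)x$, one has $\langle a, x\rangle = \langle a, Px\rangle + \langle (I-P)a, (I-P)x\rangle$, and the second term has no reason to vanish: the component of $a$ orthogonal to the row space pairs with the component of $x$ orthogonal to the row space. Concretely, take $A = (1, 0) \in \mathbb{R}^{1\times 2}$ (full row rank, $\sigma_1 = 1$) and $a = x = (0,1)^\top$: then $\langle a, x\rangle = 1$ while $Px = 0$, so your identity would give $1 = 0$. Moreover, the same example shows the gap is not repairable, because the lemma as stated is false whenever $m < n$: there $Ax = 0$ while $\langle a, x\rangle = 1$, so the left-hand side is infinite (or undefined) and no argument can establish the bound. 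What your projection routing actually proves is the weaker inequality $|\langle Pa, x\rangle| \leq \|a\|\,\|Ax\|/\sigma_1$, i.e.\ the claim with $a$ replaced by its row-space component.

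Your third paragraph, however, is the correct resolution, and it is where your proposal earns its keep. The inequality is true --- with exactly the one-line proof you describe, $\|Ax\| \geq \sigma_1 \|x\|$ plus Cauchy--Schwarz --- precisely when the matrix multiplying $x$ in the denominator is injective (full column rank), and that is the only regime in which the paper invokes the lemma: the strongly convex, hence invertible, Hessian in Proposition~\ref{prop:SC_satisfies_assump}, and the full-column-rank matrix $(\nabla_x f_2)^\top$ in Proposition~\ref{prop:nonlinear_transform_satisfies_assump}. It is worth knowing that the paper's own proof commits the very flaw you do: it introduces a right inverse $A'$ with $AA' = I_m$ and silently rewrites $\langle a, x\rangle$ as $a^\top A' A x = \langle a, Px\rangle$ (note that $A'A = P$ is the row-space projection, not $I_n$), then applies Cauchy--Schwarz and $\|A'\| = 1/\sigma_1$. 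So your attempt reproduces the paper's argument, hidden gap included; the difference is that you noticed the kernel obstruction and identified the injective case as the one that matters downstream, which the paper does not. The clean fix, for both proofs, is to restate the lemma for matrices of full column rank (or add the hypothesis $x \in \mathrm{range}(A^\top)$, or state it with $Pa$ in place of $a$), after which your projection bookkeeping becomes a correct and transparent proof.
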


\begin{proof}
Without loss of generality, we define the singular value decomposition of $A$ as
\begin{align*}
     A = U
     \left[
     \begin{array}{cc}
          \begin{matrix}
          \sigma_1 & & \\
          & \ddots & \\
          & & \sigma_m
          \end{matrix} &
          \mbox{\Large 0}
     \end{array} 
     \right]
     V,
\end{align*}
where $U \in \mathbb{R}^{m \times m}$ and $V \in \mathbb{R}^{n \times n}$ are orthogonal matrices. We also define another matrix $A'$ as
\begin{align*}
     A' = V^{-1}
     \left[
     \begin{array}{c}
          \begin{matrix}
          \sigma_1^{-1} & & \\
          & \ddots & \\
          & & \sigma_m^{-1}
          \end{matrix}
          \\
          \mbox{\Large 0}
     \end{array} 
     \right]
     U^{-1}.
\end{align*}
Then, we have
\begin{align*}
     AA' = U
     \left[
     \begin{array}{cc}
          \begin{matrix}
          \sigma_1 & & \\
          & \ddots & \\
          & & \sigma_m
          \end{matrix} &
          \mbox{\Large 0}
     \end{array} 
     \right]
     \left[
     \begin{array}{c}
          \begin{matrix}
          \sigma_1^{-1} & & \\
          & \ddots & \\
          & & \sigma_m^{-1}
          \end{matrix}
          \\
          \mbox{\Large 0}
     \end{array} 
     \right]
     U^{-1} = \mathrm{I}_m,
\end{align*}
where $\mathrm{I}_m \in \mathbb{R}^{m \times m}$ is the identity matrix. Therefore, we can obtain
\begin{align*}
     \frac{|\langle a, x \rangle|}{\| \langle A, x \rangle \|} &= \frac{| a^\top A'^\top A^\top x |}{\| \langle A, x \rangle \|} \\
     &\leq \frac{\| \agbra*{A'^\top, a}\| \| \langle A, x \rangle \|}{\| \langle A, x \rangle \|} \\
     &\leq \| A' \| \| a \| = \frac{\| a \|}{\sigma_1}.
\end{align*}
\end{proof}

%
%
%
%
%
%

\section{Numerical Experiments} \label{chap:numerical_experiments}

\subsection{Linear Regression with Time-Varying Curvature} \label{sec:linear_regression_appendix}
%

We changed the curvature of the objective function with time. The $(i,j)$ entry of the matrix $A$ was set to
\begin{align*} 
  A_{ij}(t) =
  \left\{
  \begin{array}{cc}
   0.1 (1 + 0.05 \cos\paren*{\frac{t}{200} + \frac{2\pi i}{10}}),  & i = j \leq 5\\
   10 (1 + 0.05 \cos\paren*{\frac{t}{200} + \frac{2\pi i}{10}}), & i = j > 5\\
   0, & i \neq j
  \end{array}
  \right..
\end{align*}

The time-varying vector $b(t)$ was set as in Subsection~\ref{sec:linear_regression_main}, that is, 
\begin{align*}
  b_i(t) = 10 \sin\paren*{\frac{t}{100} + \frac{2\pi i}{10}},\ 1 \leq i \leq 10.
\end{align*}
The stepsizes were set to $(\alpha =) \beta = 0.01 \simeq 1/L_1 = 1/(10.5)^2$ for algorithms. 
The parameter $\zeta$ for FOA-Min and CP was determined by assuming $\| x \| \leq 10$ and following
\begin{align*}
  \frac{| \nabla_t f(x;t) |}{\| \nabla_x f(x;t) \|} \leq (\| A'(t) \| \| x \| + \|b'(t)\|)\| A^{-1}(t) \| \leq \frac{(0.5 / 200) \times 10  + 10\sqrt{5} / 100}{0.095} \leq 3.0 =: \zeta.
\end{align*}
Other parameter settings were the same as those described in Table~\ref{table:lr_invariant_param_setting}.

Figure~\ref{fig:lr_all} shows the optimization results when $h=1e^{-3}$. FOA-Min and CP are able to decrease the function value more than the existing algorithms as well as when the curvature is invariant. On the other hand, the proposed algorithms do not outperform the existing algorithms in terms of the gradient norm. However, we can still make sure that the proposed algorithm achieves the gradient error of $O(h)$ as Figure~\ref{fig:lr_different_h} shows.

\begin{figure}[H]
    \centering
\begin{tabular}{cc}
    \begin{minipage}[t]{0.45\linewidth}
    \centering
        \includegraphics[width=2.2in]{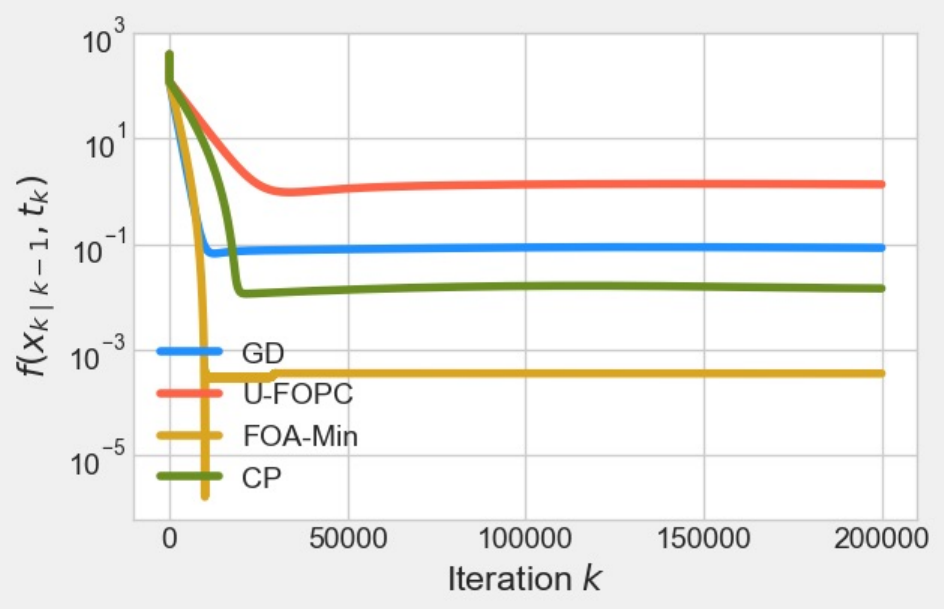}
    \end{minipage}

    \begin{minipage}[t]{0.45\linewidth}
    \centering
        \includegraphics[width=2.2in]{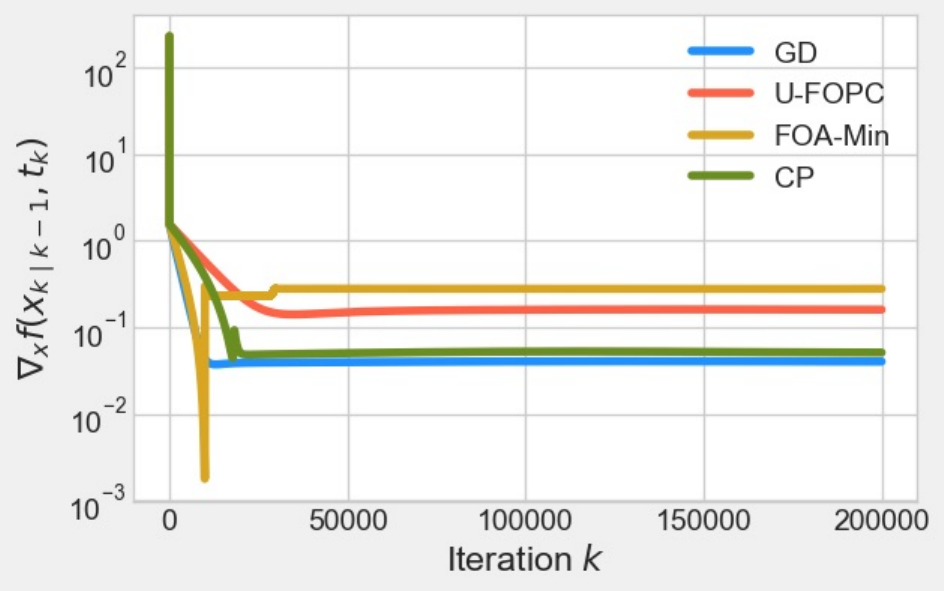}
    \end{minipage}%
\end{tabular}
\centering
    \caption{Log plots of function value and gradient norm when $h = 1e^{-3}$.}
    \label{fig:lr_all}
\end{figure}

\begin{figure}[H]
    \centering
\begin{tabular}{cc}
    \begin{minipage}[t]{0.45\linewidth}
    \centering
        \includegraphics[width=2.2in]{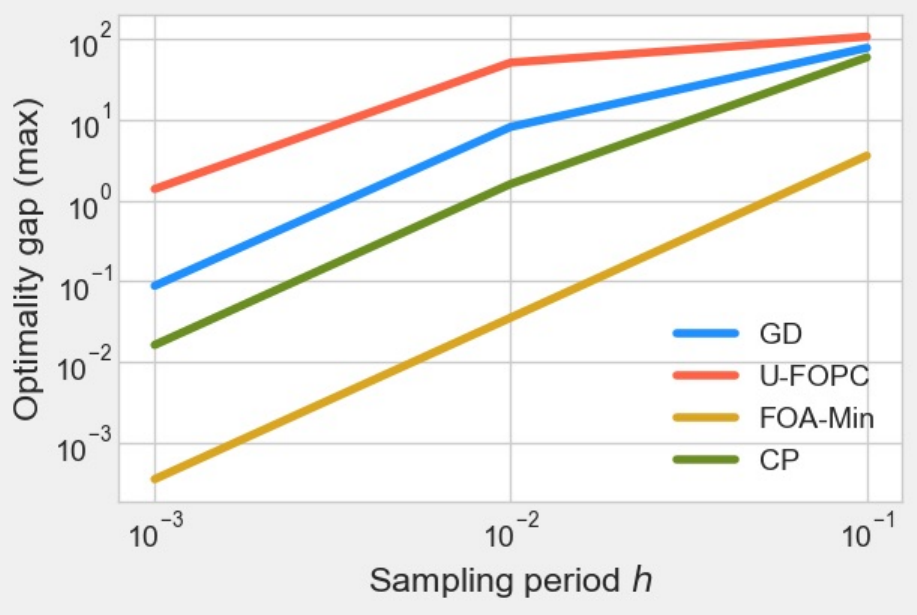}
    \end{minipage}

    \begin{minipage}[t]{0.45\linewidth}
    \centering
        \includegraphics[width=2.2in]{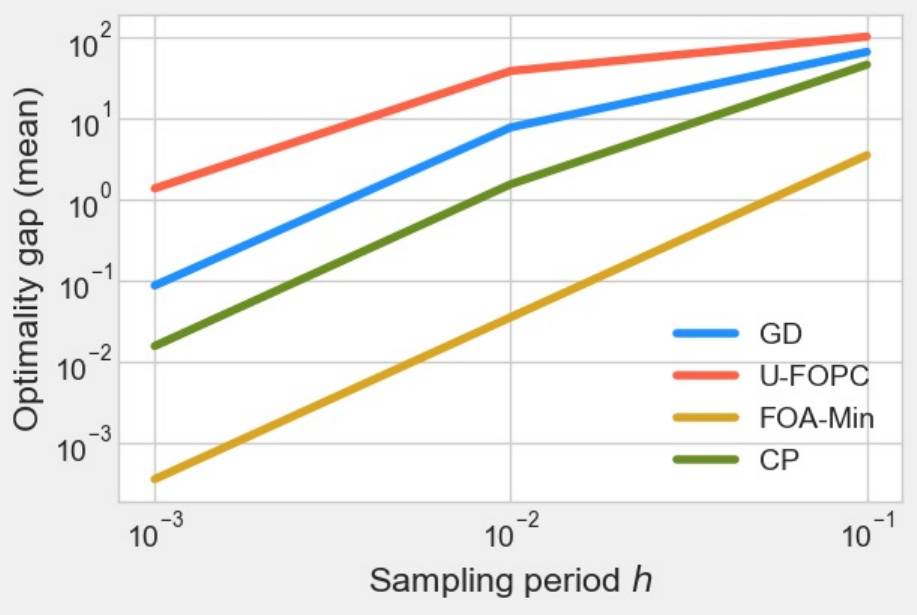}
    \end{minipage}%
\end{tabular}

\begin{tabular}{cc}
    \begin{minipage}[t]{0.45\linewidth}
    \centering
        \includegraphics[width=2.2in]{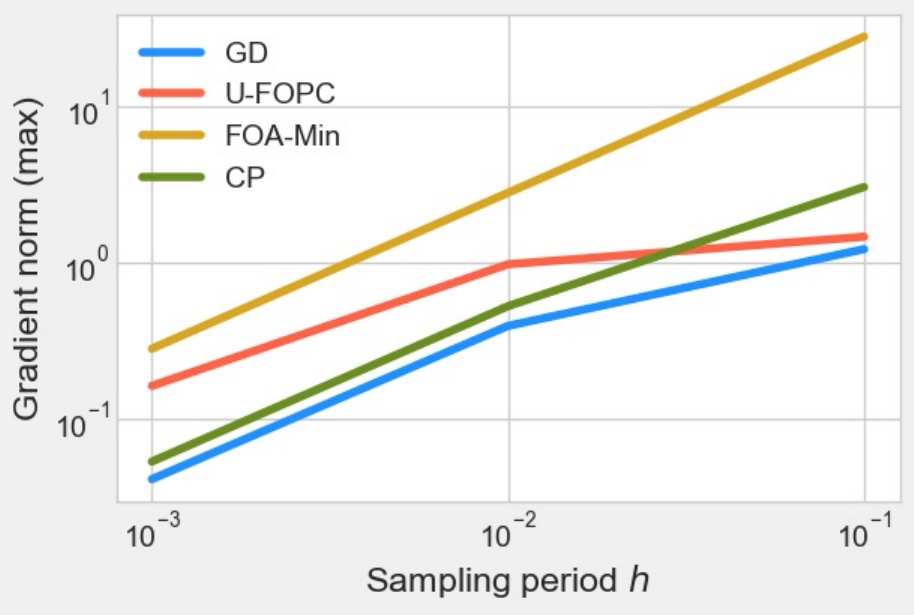}
    \end{minipage}

    \begin{minipage}[t]{0.45\linewidth}
    \centering
        \includegraphics[width=2.2in]{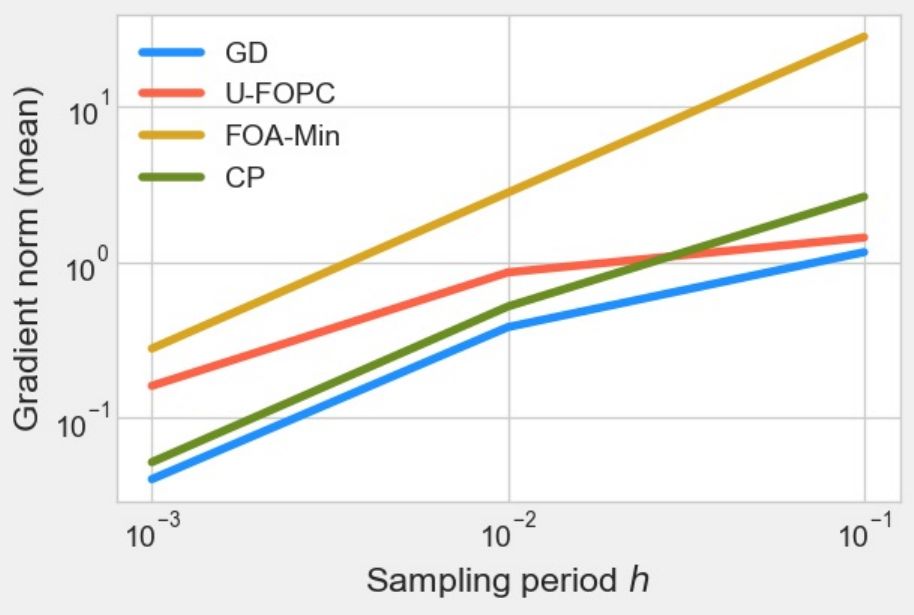}
    \end{minipage}%
\end{tabular}

\centering
    \caption{(Top) Log-log plots of maximum and mean of the gradient norm $ \| \nabla_x f(x_{k \mid k-1};t_k) \|$ versus sampling period. (Bottom) Log-log plots of maximum and mean of the optimality gap $f(x_{k \mid k-1};t_k) - f_k^\ast$ versus sampling period.
    Maximum and mean are computed based on the results of the last half of the iterations.}
    \label{fig:lr_different_h}
\end{figure}

\subsection{Additional Results for Matrix Factorization} \label{sec:mf_appendix}

Here, we present additional experimental results for the matrix factorization problem. We can see that the observation described in Section~\ref{sec:mf_main} is robust against the change of the dataset and the number of data revealed per iteration.

\paragraph{Plots of Optimization Results (Dataset 1)}
$\ $

\begin{figure}[H]
    \centering
\begin{tabular}{cc}
    \subfigure[Function value, $\norm*{ \nabla_x f(x_{0 \mid -1};t_0)} = 0.1$]{
    \begin{minipage}[t]{0.45\linewidth}
    \centering
        \includegraphics[width=2.2in]{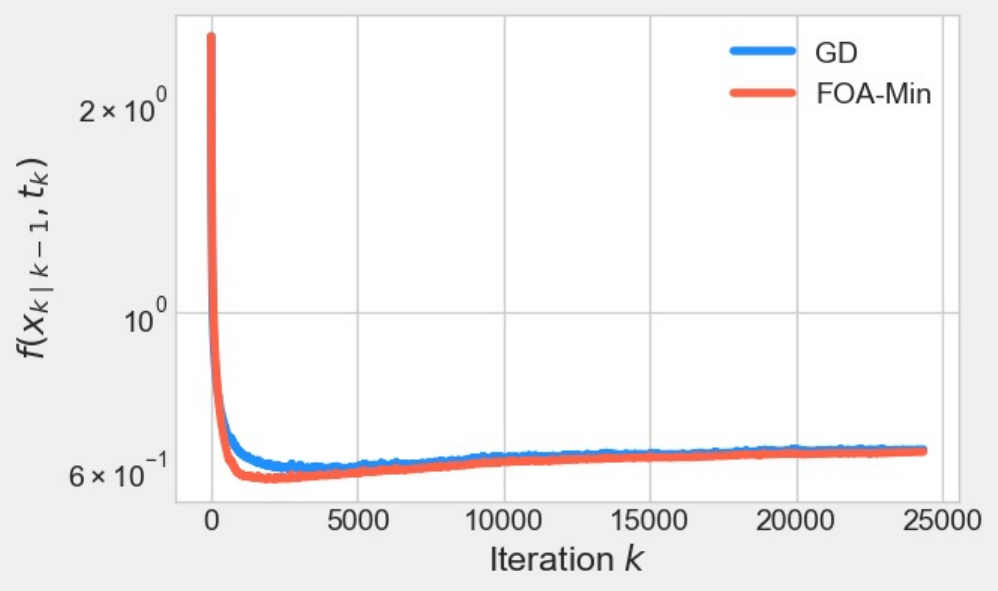}
    \end{minipage}
    }

    \subfigure[Gradient norm, $\norm*{ \nabla_x f(x_{0 \mid -1};t_0)} = 0.1$]{
    \begin{minipage}[t]{0.45\linewidth}
    \centering
        \includegraphics[width=2.2in]{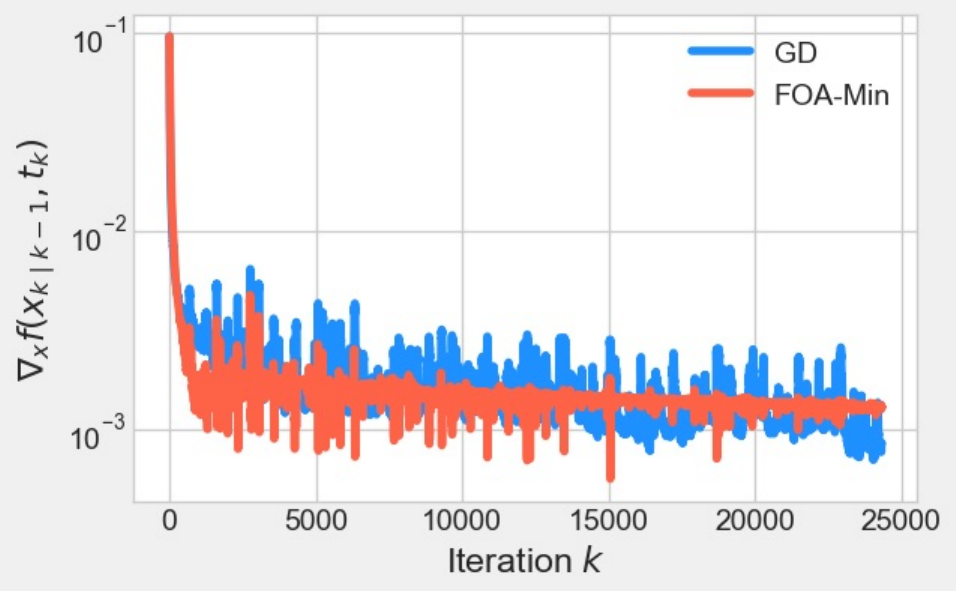}
    \end{minipage}%
    }
\end{tabular}

\begin{tabular}{cc}
    \subfigure[Function value, $\norm*{ \nabla_x f(x_{0 \mid -1};t_0)} = 1e^{-4}$]{
    \begin{minipage}[t]{0.45\linewidth}
    \centering
        \includegraphics[width=2.2in]{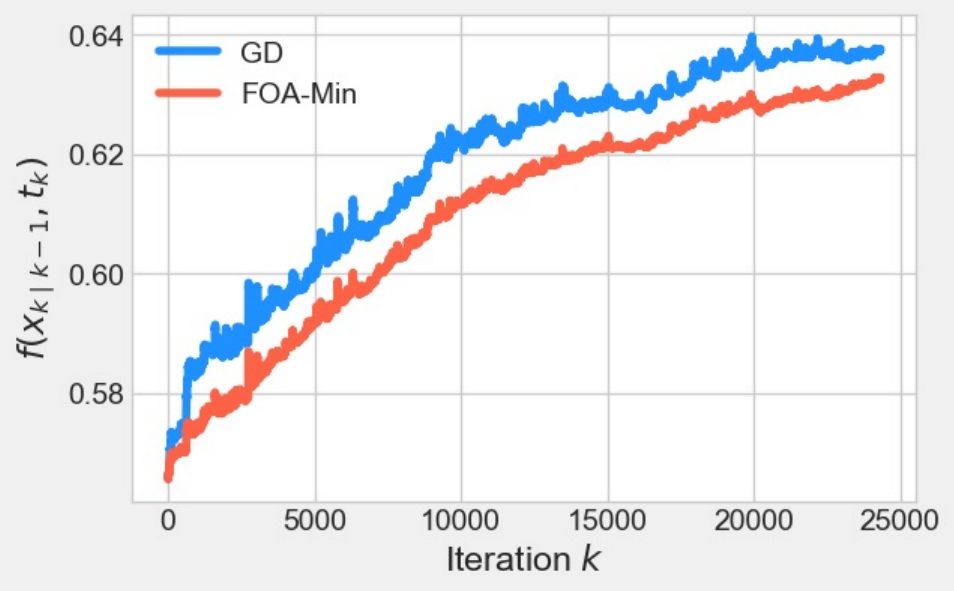}
    \end{minipage}
    }

    \subfigure[Gradient norm, $\norm*{ \nabla_x f(x_{0 \mid -1};t_0)} = 1e^{-4}$]{
    \begin{minipage}[t]{0.45\linewidth}
    \centering
        \includegraphics[width=2.2in]{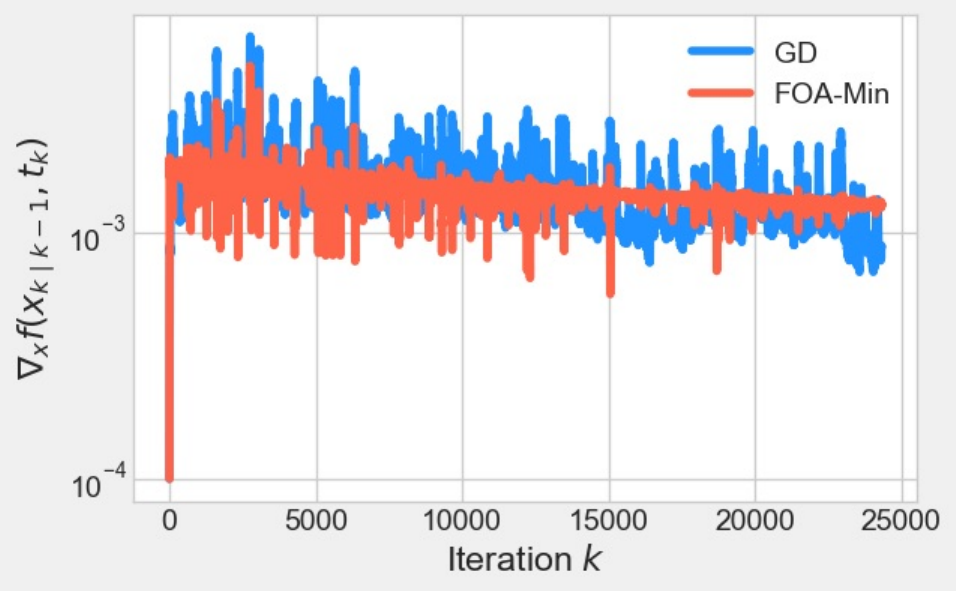}
    \end{minipage}%
    }
\end{tabular}

\centering
    \caption{Plots of function value and gradient norm for Dataset 1, and number of data revealed per iteration $N$ set to 5.}
    \label{fig:netflix_head_5}
\end{figure}

\begin{figure}[H]
    \centering
\begin{tabular}{cc}
    \subfigure[Function value, $\norm*{ \nabla_x f(x_{0 \mid -1};t_0)} = 0.1$]{
    \begin{minipage}[t]{0.45\linewidth}
    \centering
        \includegraphics[width=2.2in]{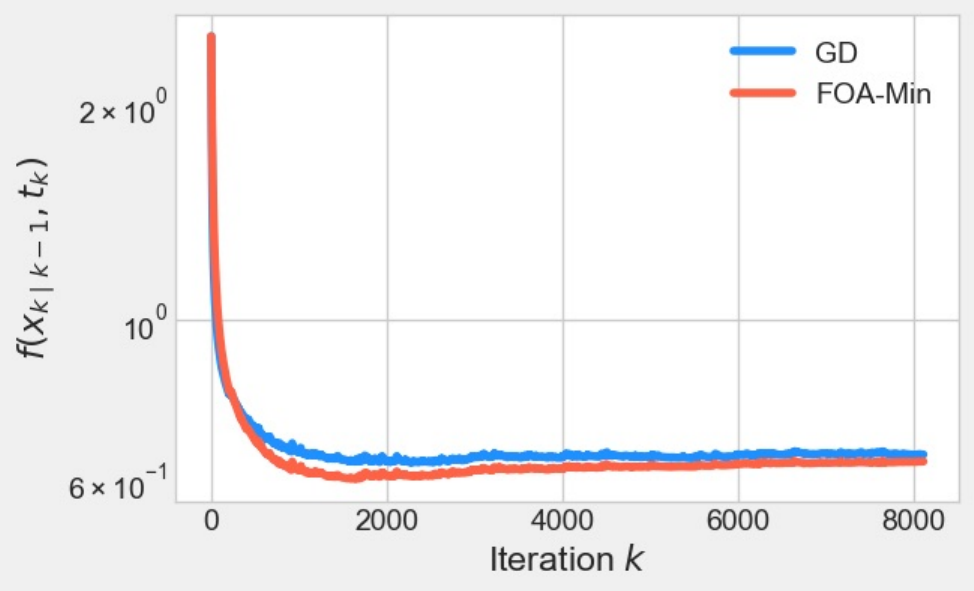}
    \end{minipage}
    }

    \subfigure[Gradient norm, $\norm*{ \nabla_x f(x_{0 \mid -1};t_0)} = 0.1$]{
    \begin{minipage}[t]{0.45\linewidth}
    \centering
        \includegraphics[width=2.2in]{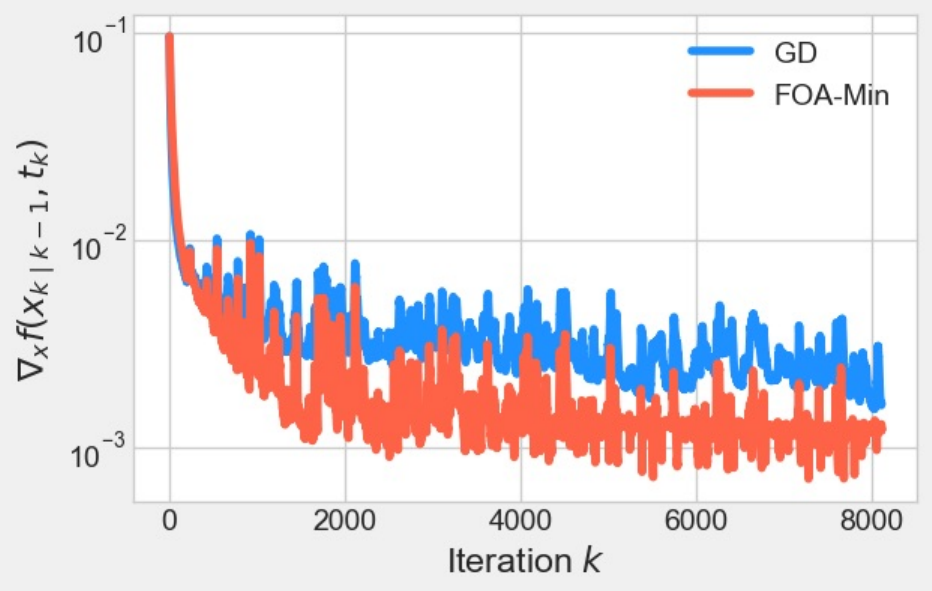}
    \end{minipage}%
    }
\end{tabular}

\begin{tabular}{cc}
    \subfigure[Function value, $\norm*{ \nabla_x f(x_{0 \mid -1};t_0)} = 1e^{-4}$]{
    \begin{minipage}[t]{0.45\linewidth}
    \centering
        \includegraphics[width=2.2in]{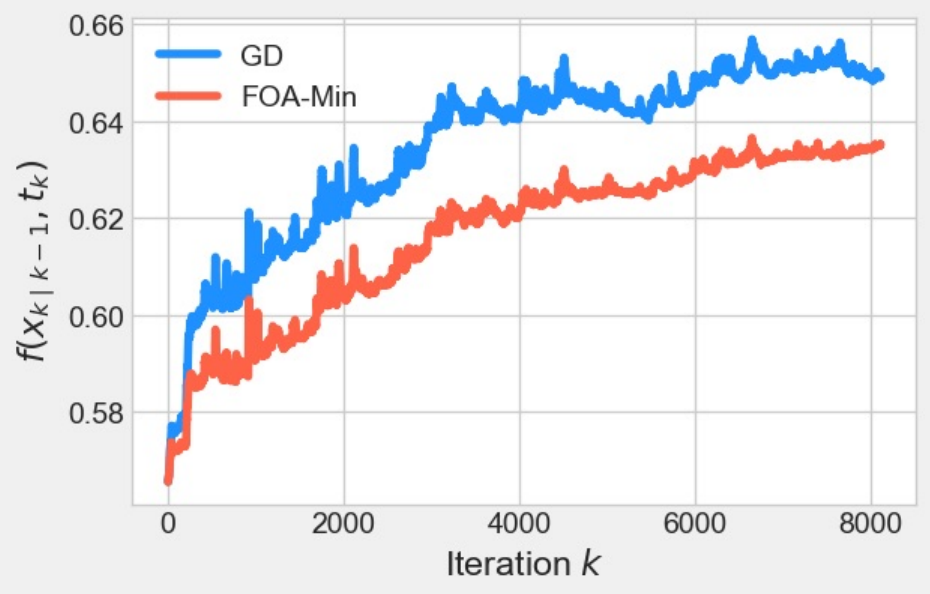}
    \end{minipage}
    }

    \subfigure[Gradient norm, $\norm*{ \nabla_x f(x_{0 \mid -1};t_0)} = 1e^{-4}$]{
    \begin{minipage}[t]{0.45\linewidth}
    \centering
        \includegraphics[width=2.2in]{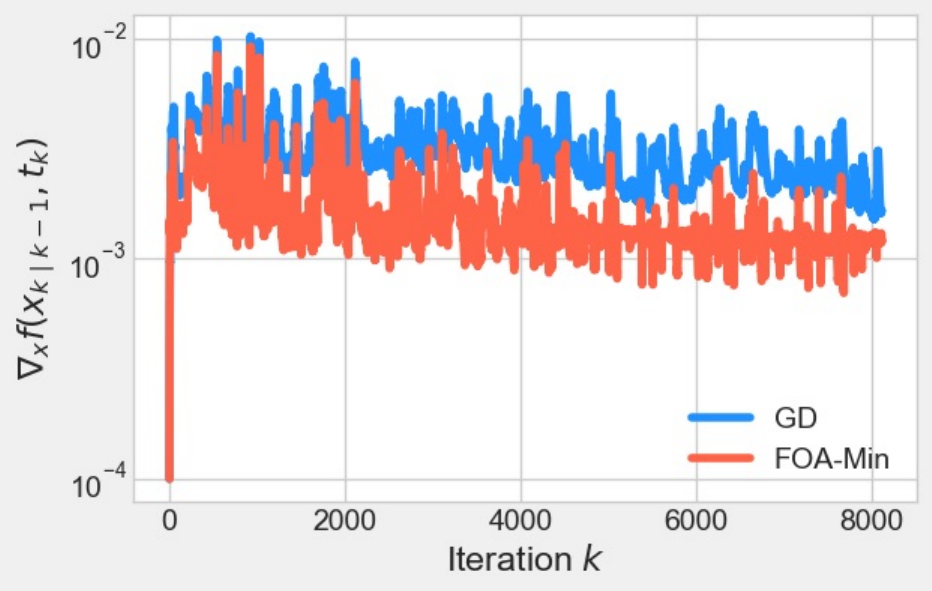}
    \end{minipage}%
    }
\end{tabular}

\centering
    \caption{Plots of function value and gradient norm for Dataset 1, and number of data revealed per iteration $N$ set to 15.}
    \label{fig:netflix_head_15}
\end{figure}

\paragraph{Plots of Optimization Results (Dataset 2)}
$\ $

\begin{figure}[H]
    \centering
\begin{tabular}{cc}
    \subfigure[Function value, $\norm*{ \nabla_x f(x_{0 \mid -1};t_0)} = 0.1$]{
    \begin{minipage}[t]{0.45\linewidth}
    \centering
        \includegraphics[width=2.2in]{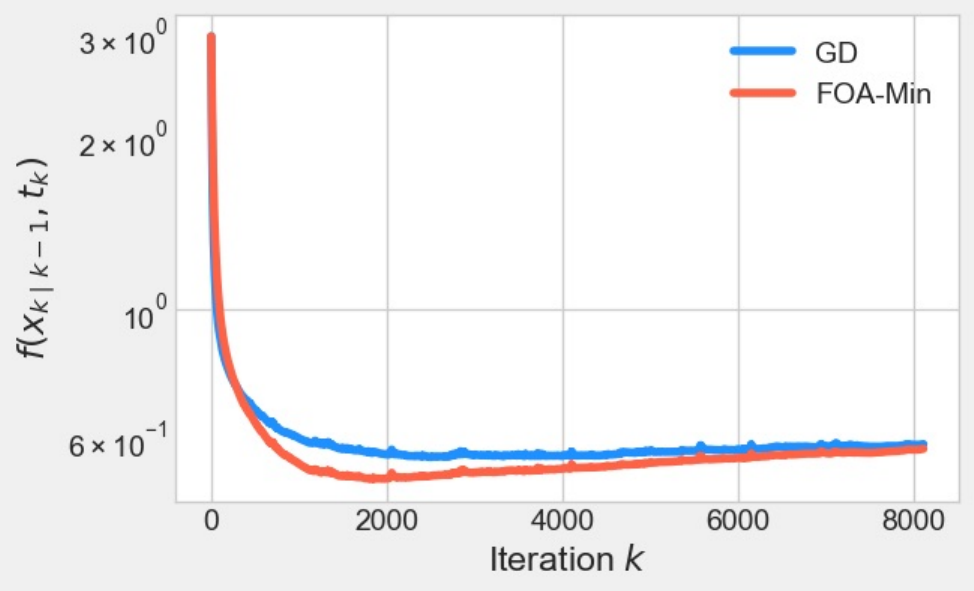}
    \end{minipage}
    }

    \subfigure[Gradient norm, $\norm*{ \nabla_x f(x_{0 \mid -1};t_0)} = 0.1$]{
    \begin{minipage}[t]{0.45\linewidth}
    \centering
        \includegraphics[width=2.2in]{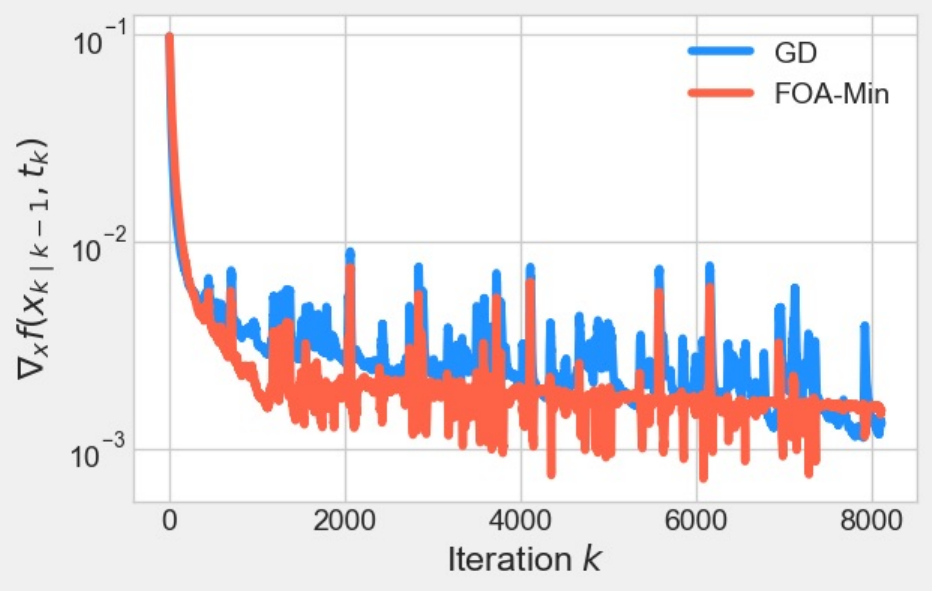}
    \end{minipage}%
    }
\end{tabular}

\begin{tabular}{cc}
    \subfigure[Function value, $\norm*{ \nabla_x f(x_{0 \mid -1};t_0)} = 1e^{-4}$]{
    \begin{minipage}[t]{0.45\linewidth}
    \centering
        \includegraphics[width=2.2in]{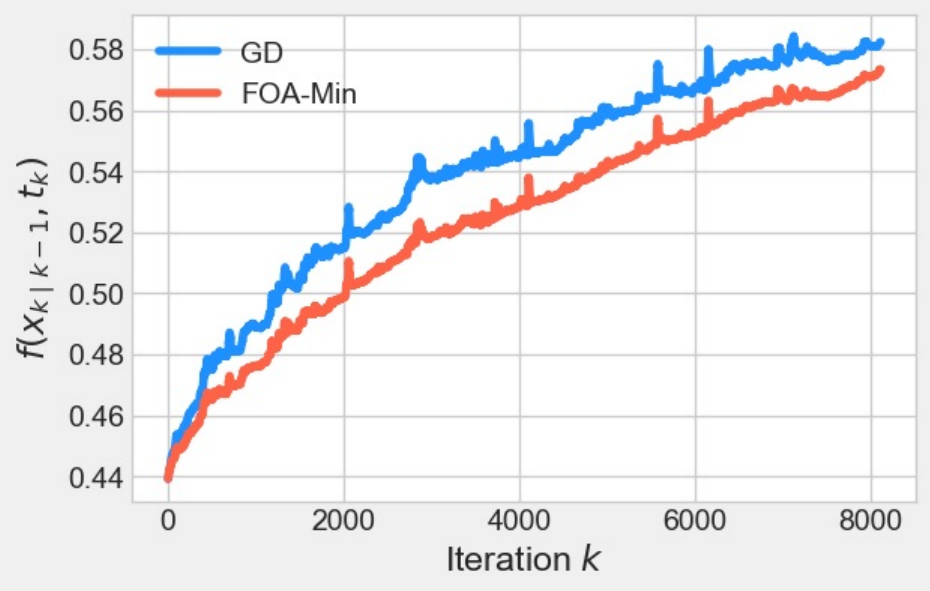}
    \end{minipage}
    }

    \subfigure[Gradient norm, $\norm*{ \nabla_x f(x_{0 \mid -1};t_0)} = 1e^{-4}$]{
    \begin{minipage}[t]{0.45\linewidth}
    \centering
        \includegraphics[width=2.2in]{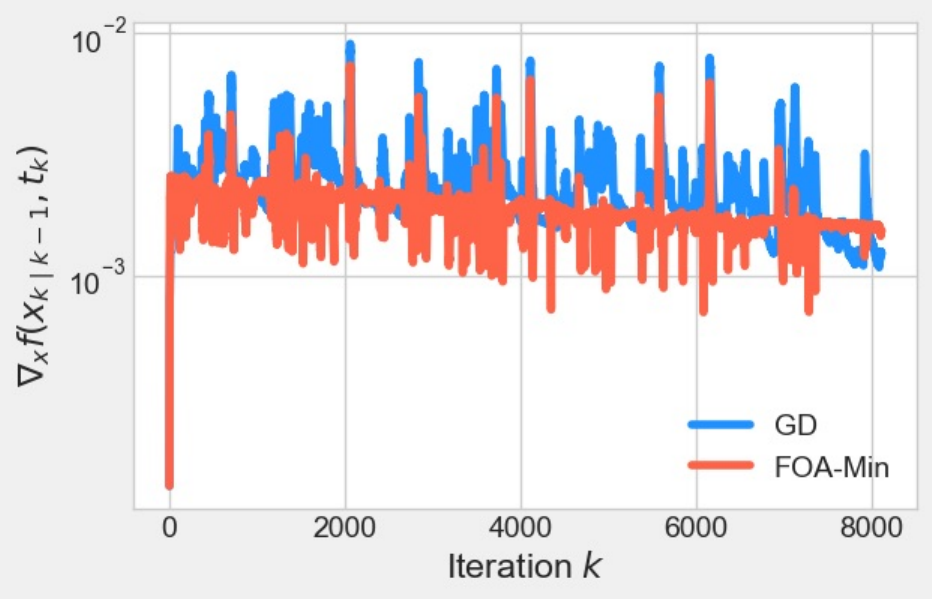}
    \end{minipage}%
    }
\end{tabular}

\centering
    \caption{Plots of function value and gradient norm for Dataset 2, and number of data revealed per iteration $N$ set to 15.}
    \label{fig:netflix_tail_15}
\end{figure}

\begin{figure}[H]
    \centering
\begin{tabular}{cc}
    \subfigure[Function value, $\norm*{ \nabla_x f(x_{0 \mid -1};t_0)} = 0.1$]{
    \begin{minipage}[t]{0.45\linewidth}
    \centering
        \includegraphics[width=2.2in]{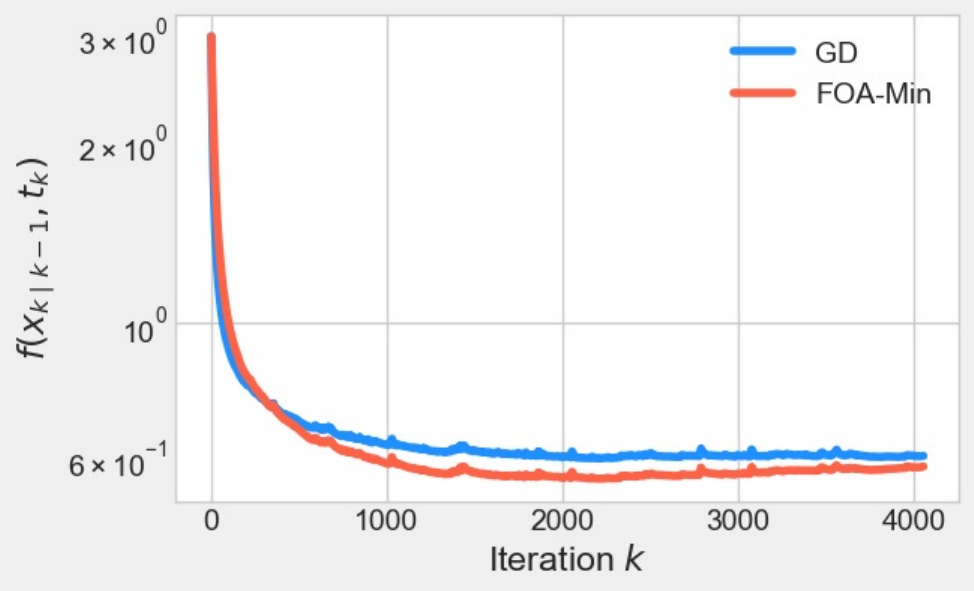}
    \end{minipage}
    }
    \subfigure[Gradient norm, $\norm*{ \nabla_x f(x_{0 \mid -1};t_0)} = 0.1$]{
    \begin{minipage}[t]{0.45\linewidth}
    \centering
        \includegraphics[width=2.2in]{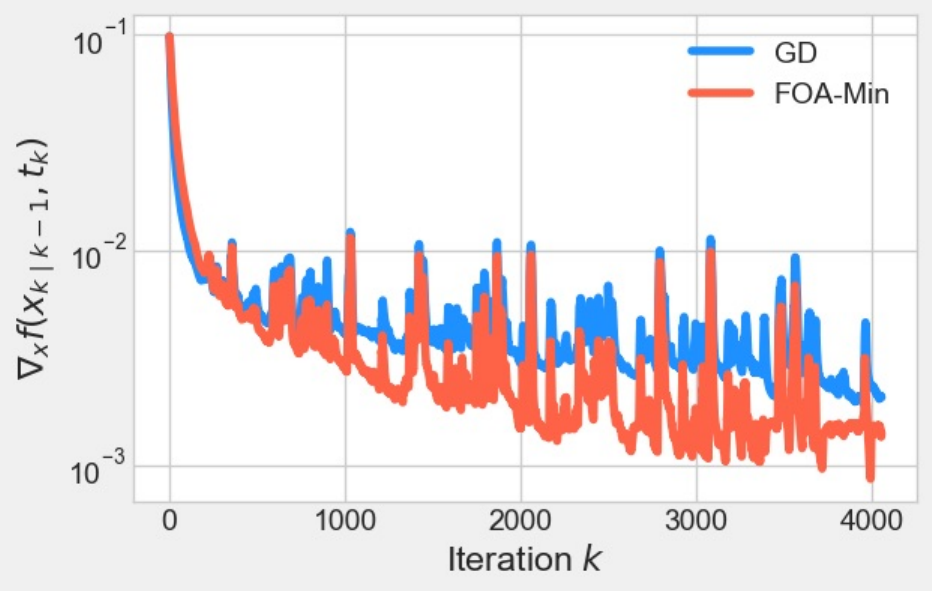}
    \end{minipage}%
    }
\end{tabular}

\begin{tabular}{cc}
    \subfigure[Function value, $\norm*{ \nabla_x f(x_{0 \mid -1};t_0)} = 1e^{-4}$]{
    \begin{minipage}[t]{0.45\linewidth}
    \centering
        \includegraphics[width=2.2in]{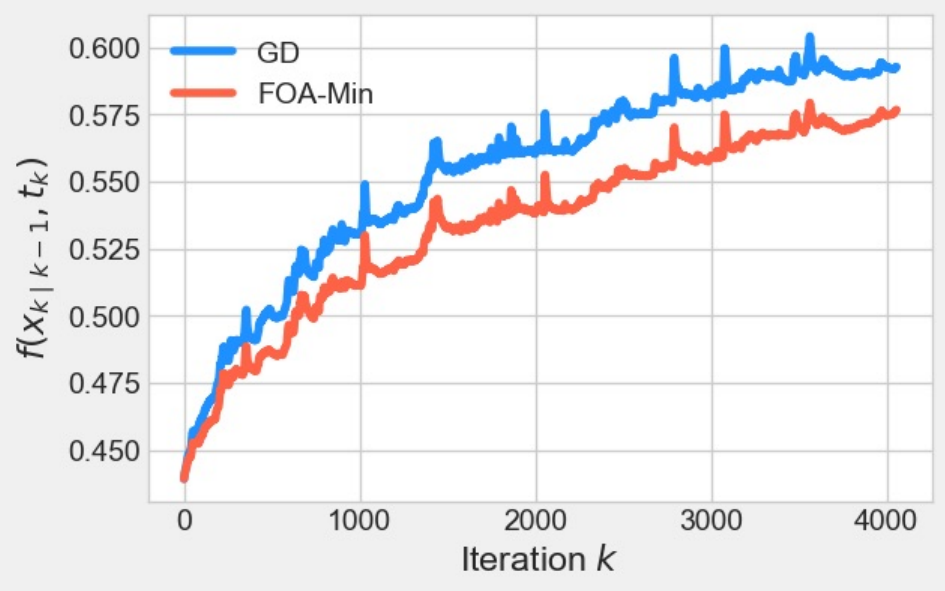}
    \end{minipage}
    }

    \subfigure[Gradient norm, $\norm*{ \nabla_x f(x_{0 \mid -1};t_0)} = 1e^{-4}$]{
    \begin{minipage}[t]{0.45\linewidth}
    \centering
        \includegraphics[width=2.2in]{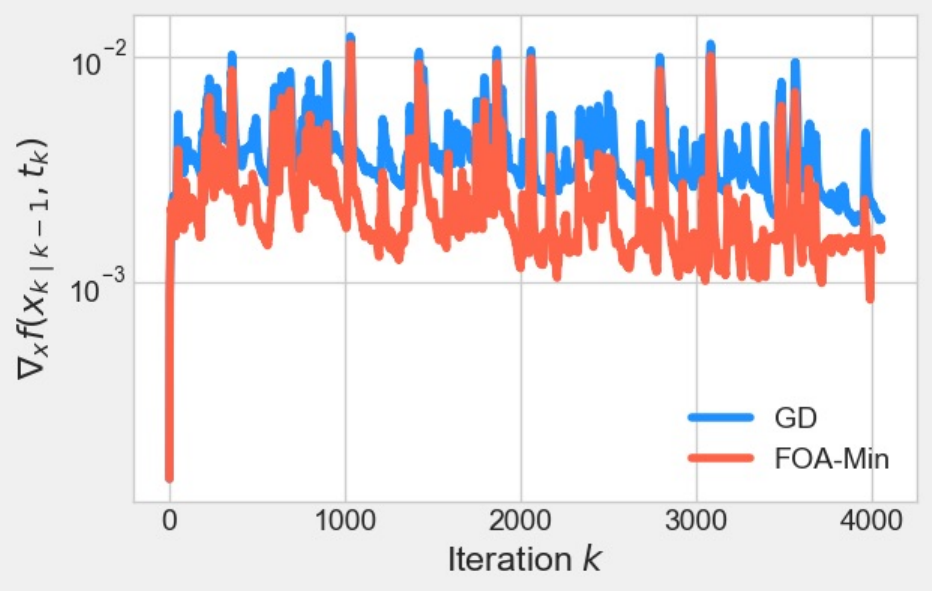}
    \end{minipage}%
    }
\end{tabular}

\centering
    \caption{Plots of function value and gradient norm for Dataset 2, and number of data revealed per iteration $N$ set to 30.}
    \label{fig:netflix_tail_30}
\end{figure}

\begin{figure}[H]
    \centering
\begin{tabular}{cc}
    \subfigure[Function value, $\norm*{ \nabla_x f(x_{0 \mid -1};t_0)} = 0.1$]{
    \begin{minipage}[t]{0.45\linewidth}
    \centering
        \includegraphics[width=2.2in]{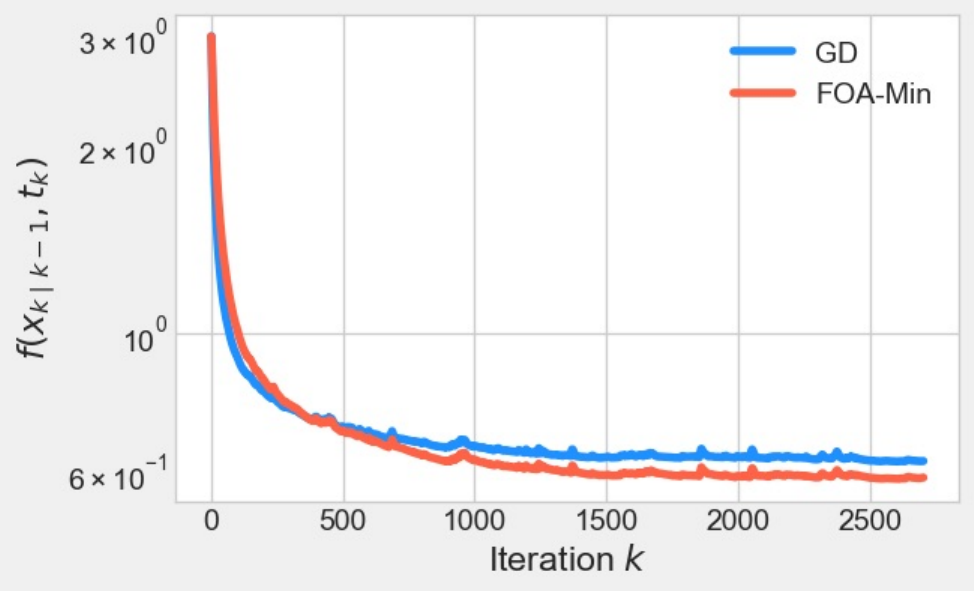}
    \end{minipage}
    }
    \subfigure[Gradient norm, $\norm*{ \nabla_x f(x_{0 \mid -1};t_0)} = 0.1$]{
    \begin{minipage}[t]{0.45\linewidth}
    \centering
        \includegraphics[width=2.2in]{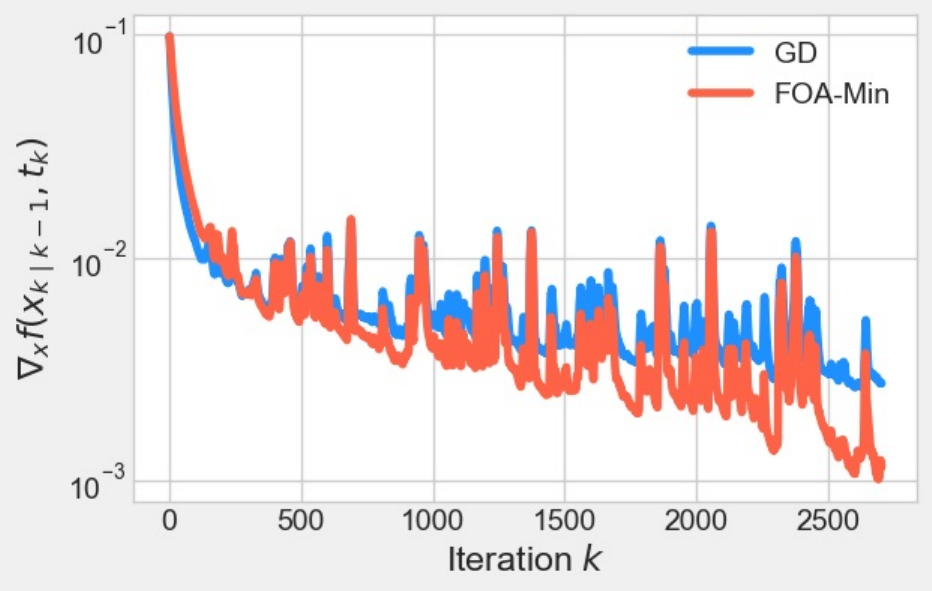}
    \end{minipage}%
    }
\end{tabular}

\begin{tabular}{cc}
    \subfigure[Function value, $\norm*{ \nabla_x f(x_{0 \mid -1};t_0)} = 1e^{-4}$]{
    \begin{minipage}[t]{0.45\linewidth}
    \centering
        \includegraphics[width=2.2in]{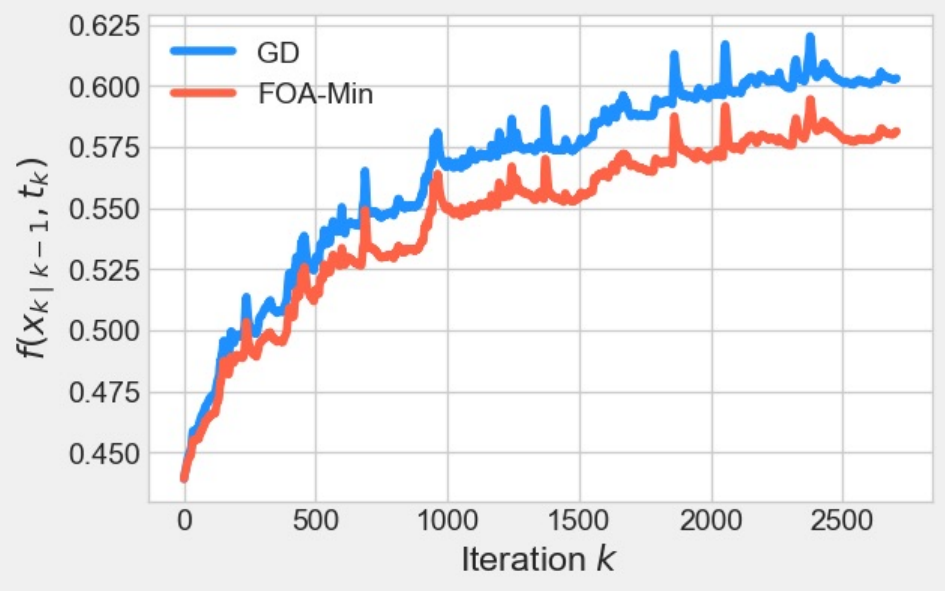}
    \end{minipage}
    }

    \subfigure[Gradient norm, $\norm*{ \nabla_x f(x_{0 \mid -1};t_0)} = 1e^{-4}$]{
    \begin{minipage}[t]{0.45\linewidth}
    \centering
        \includegraphics[width=2.2in]{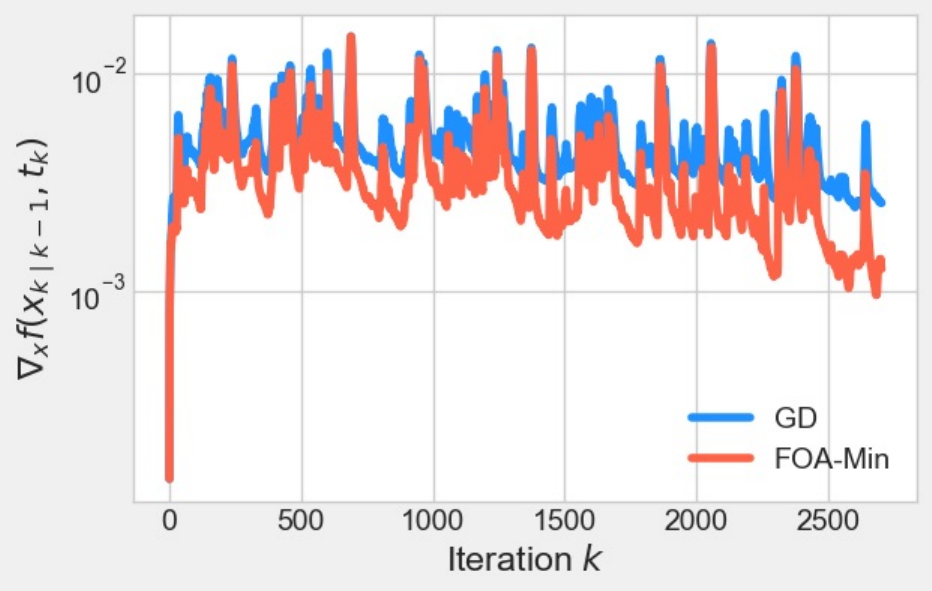}
    \end{minipage}%
    }
\end{tabular}

\centering
    \caption{Plots of function value and gradient norm for Dataset 2, and number of data revealed per iteration $N$ set to 45.}
    \label{fig:netflix_tail_45}
\end{figure}



\bibliographystyle{abbrvnat}
\bibliography{main} 


\end{document}